\newtheorem{thm}{Theorem}[section]
\newtheorem{cor}[thm]{Corollary}
\newtheorem{lem}[thm]{Lemma}
\newtheorem{prop}[thm]{Proposition}
\newtheorem{defn}[thm]{Definition}
\theoremstyle{remark}
\newtheorem{rem}{Remark}[section]
\def\ta{\theta}
\def\al{{\alpha}}
\def\be{{\beta}}
\def\da{{\delta}}
\def\ga{{\gamma}}
\def\og{{\omega}}
\def\ld{{\lambda}}
\def\Bl{\Bigl}
\def\Br{\Bigr}
\def\f{\frac}
\def\tf{\tfrac}
\def\vi{\varphi}
\def\({\left(}
\def \){ \right)}
\def\[{\left[}
\def \]{ \right]}
\newcommand\p{\partial}
\def\i{{\textnormal{i}}}
 \def\a{{\alpha}}
 \def\b{{\beta}}
 \def\g{{\gamma}}
 \def\k{{\kappa}}
 \def\t{{\theta}}
 \def\l{{\lambda}}
 \def\d{{\delta}}
 \def\o{{\omega}}
 \def\s{{\sigma}}
 \def\la{{\langle}}
 \def\ra{{\rangle}}
\def\va{\varepsilon}
 \def\ve{{\varepsilon}}
 \def\CD{{\mathcal D}}
 \def\CH{{\mathcal H}}
 \def\CO{{\mathcal O}}
 \def\CS{{\mathcal S}}
 \def\CV{{\mathcal V}}
 \def\NN{{\mathbb N}}
 \def\RR{{\mathbb R}}
 \def\ZZ{{\mathbb Z}}
        \def\proj{\operatorname{proj}}
\def\be{\beta}
\def\sa{\sigma}
\newcommand{\wh}{\widehat}
\begin{document}

\title[projection operators and Ces\`aro means in weighted $L^p$ space]
{Boundedness of projection operators and Ces\`aro means in
weighted $L^p$ space on the unit sphere}
\author{Feng Dai}
\address{Department of Mathematical and Statistical Sciences\\
University of Alberta\\, Edmonton, Alberta T6G 2G1, Canada.}
\email{dfeng@math.ualberta.ca}
\author{Yuan Xu}
\address{Department of Mathematics\\ University of Oregon\\
    Eugene, Oregon 97403-1222.}\email{yuan@math.uoregon.edu}

\date{\today}
\keywords{projection operator, Ces\`aro means, weighted $L^p$ space,
unit sphere}
\subjclass{33C50, 42B08, 42C10}
\thanks{The first  author  was partially supported  by the NSERC Canada
under grant G121211001. The second author was partially supported by the
NSF under Grant DMS-0604056}

\begin{abstract}
For the weight function $\prod_{i=1}^{d+1}|x_i|^{2\k_i}$ on the
unit sphere, sharp local estimates of the orthogonal projection
operators are obtained   and used to prove the convergence of the
Ces\`aro $(C,\delta)$ means in the weighted $L^p$ space for
$\delta$ below the critical index. Similar results are also proved
for corresponding weight functions on the unit ball and on the
simplex.
\end{abstract}

\maketitle

\section{Introduction}
\setcounter{equation}{0}

For spherical harmonic expansions on the unit sphere
$S^d:=\{(x_1,\cdots, x_{d+1}):(x_1^2+\cdots+x_{d+1}^2)^{\f12}=1\}$
of $\RR^{d+1}$, it is well known that their Ces\'aro $(C,\d)$
means are uniformly bounded in $L^p$ norm for all $1 \le p \le
\infty$ if and only if $\d \ge \frac{d-1}2$ (\cite{BC}). For $\d$
below the critical index $ \frac{d-1}2$, C.  Sogge \cite{So1}
proved a much deeper result that the $(C,\d)$ means are uniformly
bounded on $L^p(S^d)$ if
\begin{equation}\label{sogge}
|\tfrac12-\tfrac1p| \ge \tfrac1{d+1}\quad \hbox{and} \quad
   \delta > \delta(p): = \max\{d | \tfrac1p-\tfrac12 | -\tfrac12, 0\}
\end{equation}
for $d \ge 2$ and, moreover, the condition  $|1/2-1/p| \ge
1/(d+1)$ is not needed in the case of $d =2$. The condition $\d >
\d(p)$ is also known to be necessary (\cite{BC}). Later in
\cite{So2}, Sogge proved that the condition \eqref{sogge} ensures
the boundededness of the Riesz means of eigenfunction expansions
associated to the second order elliptic differential operators on
compact connected $C^\infty$ manifolds of dimension $d$.

 The purpose of the present paper is to establish analogous
results for the Ces\'aro means of orthogonal expansions associated
with the weight function $h_\k^2(x)$, where
 \begin{equation}  \label{weight}
    h_\k(x): = \prod_{i=1}^{d+1} |x_i|^{\k_i},
       \    \  \k:=(\k_1,\cdots,\k_{d+1}),  \   \  \min_{1\leq i\leq
       d+1} \k_i\ge 0,
\end{equation}
 on the unit sphere
$S^{d}$,
 as well as for orthogonal expansions for related weight
functions (see \eqref{weightB} and \eqref{weightT} below) on the
unit ball and on the simplex.

The function $h_\k$ in  (\ref{weight})  is invariant under the
group $\ZZ_2^d$ and it is the simplest example of weight functions
invariant under reflection groups studied first by Dunkl
\cite{D1}. Homogeneous polynomials that are orthogonal with
respect to $h_\k^2$ on $S^d$ are called $h$-harmonics and their
restrictions on $S^d$ are eigenfunctions of a second order
differential-difference operator, which plays the role of the
ordinary Laplacian. For the theory of $h$-harmonics, we refer to
\cite{DX} and the references therein. A brief account of what is
needed in this paper is given in the following section.

The convergence of $h$-harmonic expansions has been studied
recently. In \cite{X97} it was proved that Ces\`aro $(C,\d)$ means
converge uniformly if $\d > |\k| + \frac{d-1}2$, where $|\k| =
\k_1 + \ldots + \k_{d+1}$, and such a result holds for all other
weight functions invariant under reflection groups. In the case of
$h_\k$ in \eqref{weight} the critical index for the $(C,\d)$ means
in the uniform norm turned out to be (\cite{LX})
\begin{equation} \label{critical-index}
   \delta > \sigma_\k: = \tfrac{d-1}{2} + |\k| - \min_{1 \le i \le d+1} \k_i.
\end{equation}
Our main result in this paper,  (see Theorem \ref{thm:Cesaro} in
Section 3), shows that  for $h_\k$ in \eqref{weight}, the $(C,\d)$
means of $h$-harmonic expansions converge in the $L^p(h_\k^2;
S^d)$ norm if
\begin{equation}\label{eq:main}
  |\tfrac12-\tfrac1p| \ge \tfrac1{2 \sigma_\k+2}\quad \hbox{and} \quad
   \delta > \delta_\k(p): = \max\{(2 \sigma_\k +1) | \tfrac1p-\tfrac12 | -\tfrac12, 0\}
\end{equation}
and  that the condition $\d > \d_\k(p)$ is also necessary. Note
that \eqref{eq:main} agrees with \eqref{sogge} when $\k = 0$,
while $\da_\k(p)>\da(p)$ when $\k>0$.

The reason that these sharp results can be established for $h_\k$
in \eqref{weight} lies in an explicit formula for the kernel
$P_n(h_\k^2;\cdot,\cdot)$ of the orthogonal projection operator
(definition in the next section), while no explicit formula for
the kernel is known for other reflection invariant weight
functions. For \eqref{weight}, we have
\begin{align} \label{proj-Kernel}
 P_n(h_\k^2;x,y) = c_\k  \frac{n+\l_\k}{\l_\k} \int_{[-1,1]^{d+1}}
      C_n^{\l_\k}(u(x,y,t)) \prod_{i=1}^{d+1}(1+t_i) (1-t_i^2)^{\k_i-1}dt,
\end{align}
where $C_n^\l$ is the Gegenbauer polynomial of degree $n$,
\begin{equation} \label{lambda}
  \l_\k := \tfrac{d-1}{2}  +|\k|,   \quad \hbox{and} \quad u(x,y,t)=
      x_1y_1t_1+\ldots + x_{d+1}y_{d+1}t_{d+1},
\end{equation}
and $c_\k$ is the normalization constant of the weight function
$\prod_{i=1}^d(1+t_i) (1-t_i^2)^{\k_i-1}$. If some $\kappa_i =0$,
then the formula  holds under the limit relation $$
 \lim_{\lambda \to 0} c_\lambda \int_{-1}^1 g(t)(1+t) (1-t)^{\lambda -1} dt
  = g(1).$$  For the spherical harmonic expansions, this kernel is the
familiar $P_n(x,y):=\frac{n+\l}{\l} C_n^\l(\la x,y\ra)$ with $\l =
\frac{d-1}2$ (cf. \cite{St}), which is also called a zonal
harmonic.

The simple structure of the zonal harmonics means that one can
derive various properties and estimates relatively easily. The
structure of the kernel $P_n(h_\k^2; x,y)$ in \eqref{proj-Kernel}
is far more complicated, making derive information from it more
difficult. There is, however, a deeper reason that the study of
$h$-harmonic expansion is more difficult than that of ordinary
spherical harmonic. The zonal harmonics are invariant under the
rotation group $O(d+1)$ in the sense that $P_n(x,y) = P_n(x g, y g
)$ for all $g \in O(d+1)$, which reflects the fact that the sphere
is a homogeneous space. The $P_n(h_\k^2;x,y)$ in
\eqref{proj-Kernel} is invariant under $\ZZ_2^{d+1}$, a subgroup
of $O(d+1)$, and we are in fact working with a weighted sphere
that has singularity on the largest circles of the coordinate
planes. In particular, we can no longer treat the sphere as a
homogeneous space and many of our estimates of various kernels
have to be local, depending on the location of the points.

The difficulty manifests acutely in the study of the $L^p$
boundedness of Ces\`aro means of $h$-harmonic expansions. For the
ordinary spherical harmonics, the proof of Sogge  \cite{So1}
relies on the sharp asymptotic bounds  for the $(L^p,L^2)$ norms
of the orthogonal projection operators, and  a result of
Bonami-Clerc \cite{BC}, which says that the sharp results for
Ces\`aro summation on $L^p$ can be deduced from these asymptotic
estimates of  orthogonal projections. ( See also Sogge \cite{So2}
for the case of general compact manifolds.) For our study, while
we can obtain  global sharp   asymptotic  bounds for the $L^p(
h_\k^2; S^d)\rightarrow L^2(h_\k^2;S^d)$ norms of  the orthogonal
projection operators  of  $h$-harmonic expansions (see Theorem
\ref{thm:proj} in Section 3),  which  are in full analogy with
those of Sogge \cite{So1} for ordinary spherical harmonics,
seemingly, these global estimates  are not enough for the proof
of the uniform boundedness of  Ces\`aro means on weighted $L^p$.
In order to obtain our  main result on the boundedness of Ces\`aro
means, we have to replace the norm of the orthogonal projection
operator by a local estimate of the projection operator over a
spherical cap. (See Theorem \ref{thm:proj-cap} in Section 3.) The
latter  local result is substantially more difficult to establish,
since only a part of the proof can follow Sogge's strategy based
on Stein's theorem on analytic interpolation and the rest has to
rely on sharp pointwise local estimate of the kernels.

Analogues of our main results also hold for orthogonal expansions
on the unit ball and on the simplex for weight functions related
to $h_\k^2$, including in particular the Lebesgue measure (see
Section 2). In fact, they follow more or less  from the results
for $h$-harmonics.  In particular, the same condition
\eqref{critical-index} guarantees the convergence of the Ces\`aro
means in the corresponding weighted $L^p$ space.

The paper is organized as follows: The next section contains
preliminary, the main results are stated and discussed in Section
3. The local estimate of the projection operator is studied in
Section 4. The proof of the main result for the projection
operators on the sphere is given in Section 5, while the proof of
the main result for the Ces\'aro means on the sphere is presented
in Section 6. Finally, the results on the ball and on the simplex
are proved in Section 7.


\section{Preliminary}
\setcounter{equation}{0}

\subsection{$h$-spherical harmonics}

We restrict our discussion to $h_\k$ in \eqref{weight}. Unless otherwise
stated, the main reference for the material in this section is
\cite{DX}. An $h$-harmonic is a homogeneous polynomial $P$ that
satisfies the equation $\Delta_h P =0$, where $\Delta_h := \CD_1^2
+ \ldots + \CD_{d+1}^2$ and
$$
    \CD_i f(x): = \partial_i f(x) + \k_i \frac{f(x) - f(x - 2 x_i e_i)}
    {x_i},\   \  1\leq i\leq d+1,
$$
$e_1,\cdots, e_{d+1}$ denote the usual coordinate vectors in
$\mathbb{R}^{d+1}$.  The differential-difference operators
$\CD_1,\ldots,\CD_{d+1}$ are the Dunkl operators, which commute.
An $h$-harmonic is an orthogonal polynomial with respect to the
weight function $h_\k^2(x)$ on $S^d$. Its restriction on the
sphere is called a spherical $h$-harmonic. Let $\CH_n^{d}(h_\k^2)$
denote the space of spherical $h$-harmonics of degree $n$ on
$S^d$. It is known that $\dim
\CH_n^d(h_\k^2)=\binom{n+d+1}{n}-\binom{n+d-1}{n-2}$. The Hilbert
space theory shows that
$$
L^2(h_k^2; S^{d}) = \sum_{n=0}^\infty \CH_n^{d}(h_\k^2):  \qquad
       f = \sum_{n=0}^\infty \proj_n(h_\k^2; f),
$$
where $\proj_n(h_\k^2) : L^2(h_\k^2;S^{d}) \mapsto \CH_n^{d}(h_\k^2)$ is
the projection operator, which can be written as an integral operator
$$
   \proj_n(h_\k^2; f, x) = a_\k \int_{S^{d}} f(y) P_n(h_\k^2 ;x,y) h_\k^2(y)
   d\o(y),\   \    x\in S^d,
$$
where $d\o(y)$ denotes the usual Lebesgue measure on $S^d$, $a_\k$ is
the normalization constant, $a_\k^{-1} = \int_{S^d} h_\k^2(y) d\omega(y)$
and $P_n(h_\k^2)$ is the reproducing kernel of $\CH_n^{d}(h_\k^2)$. The
kernel satisfies an explicit formula
\begin{equation} \label{proj-kernel}
 P_n(h_\k^2; x,y) =  \frac{n+\l_\k}{\l_\k} V_\k \left[
      C_n^{\l_\k}( \la \cdot,y\ra )\right](x), \qquad \l_\k = \frac{d-1}{2}+|\k|,
\end{equation}
where $C_n^\l$ is the Gegenbauer polynomial of degree $n$ and $V_\k$
is the so-called intertwining operator defined by
\begin{equation} \label{eq:Vk}
V_\k f(x) = c_\k \int_{[-1,1]^{d+1} } f(x_1t_1,\ldots,x_{d+1}t_{d+1})
            \prod_{i=1}^{d+1} (1+t_i)(1-t_i^2)^{\k_i-1}dt,
\end{equation}
in which $c_\k$ is a constant such that $V_\k 1 =1$. If some
$\kappa_i =0$, then the formula  holds under the limit relation $$
 \lim_{\lambda \to 0} c_\lambda \int_{-1}^1 g(t)(1+t) (1-t)^{\lambda -1} dt
  = g(1).$$
 Clearly \eqref{proj-kernel}
is the same as \eqref{proj-Kernel}. The operator $V_\k$ is called an
intertwining operator since it satisfies $\CD_j V_\k = V_\k \partial_j$,
$1 \le j \le d+1$.

Let $w_\l(t): = (1-t^2)^{\l-1/2}$ on $[-1,1]$. The Gegenbauer
polynomials are orthogonal with respect to $w_\l$. The
intertwining operator can be used to define a convolution $f
\ast_\k g$ for $f \in L^1(h_\k^2;S^d)$  and $g \in L^1(w_{\l_k};
[-1,1])$ (\cite{X05a})
\begin{equation} \label{convolution}
f \ast_\k g(x) :=a_\k \int_{S^d} f(y) V_\k[g(\la x,\cdot \ra)](y)
          h_\k^2(y) d\omega(y).
\end{equation}
In particular, the projection operator $\proj_n(h_\k^2; f)$ can be
written as
\begin{equation} \label{projection}
  \proj_n(h_\k^2; f) =  f \ast_\k Z_n^\k, \quad\hbox{where}\quad Z_n^\k(t) :=
          \frac{n+\l_\k}{\l_k} C_n^{\l_k}(t).
\end{equation}
This convolution satisfies the usual Young's inequality (see
\cite[p.6, Proposition  2.2]{X05a}). 
For $\k =0$, $V_\k = id$, it becomes the classical
convolution on the sphere (\cite{CZ}).  For $f \in L^1(h_\k^2;S^d)$, we also
have (\cite{X05a})
\begin{equation} \label{projf*g}
\proj_n(h_\k^2;  f\ast_\k g) = b_{\l_\k} \int_0^\pi
     \frac{C_n^{\l_\k}(\cos \t)}{C_n^{\l_\k}(1)}
        g(\cos \t) (\sin \t )^{2\l_\k} d\t \, \proj_n(h_\k^2; f),
\end{equation}
where $b_{\l_\k}$ is the normalization constant of $w_{\l_\k} (t)$
on $[-1,1]$.

For $\d > -1$, the Ces\`aro $(C,\d)$ means of the $h$-harmonic expansion
is defined by
$$
 S_n^\d (h_\k^2;f,x) : = (A_n^\delta)^{-1}\sum_{j=0}^n
       A_{n-j}^\delta \proj_j(h_\k^2; f,x),
 \qquad A_{n-j}^\delta = \binom{n-j+\delta}{n-j}.
$$
The operator $S_n^\d(h_\k^2)$ can be written as a convolution,
\begin{equation*}
   S_n^\d (h_\k^2; f) = f \ast_\k K_n^\d(w_{\l_\k}), \qquad
              K_n^\d(w_{\l_\k}; t):= (A_n^\delta)^{-1}\sum_{j=0}^n
                 A_{n-j}^\delta  Z_j^{\k}(t).
\end{equation*}
Let  $K_n^\d(h_\k^2;x,y)$ denote the kernel of $S_n^\d (h_\k^2)$; then
$$
   K_n^\d(h_\k^2;x,y) = V_\k\left[K_n(w_{\l_\k}; \la x,\cdot\ra) \right](y).
$$

\subsection{Orthogonal expansions on the unit ball}
We denote the usual  Euclidean norm of $x=(x_1,\cdots,
x_{d})\in\mathbb{R}^{d}$ by
$\|x\|:=(x_1^2+\cdots+x_{d}^2)^{\f12}$. The weight functions we
consider on the unit ball $B^d = \{x: \|x\| \le 1\} \subset \RR^d$
are defined by
\begin{equation} \label{weightB}
 W_\k^B(x): = \prod_{i=1}^{d} |x_i|^{\k_i}(1-\|x\|^2)^{\k_{d+1}-1/2},
      \qquad \k_i \ge 0,  \quad x \in B^d,
\end{equation}
which is related to the $h_\k$ in \eqref{weight} by $h_\k^2(x, \sqrt{1-\|x\|^2})=
W_\k^B(x) /\sqrt{1-\|x\|^2}$, in which $1/\sqrt{1-\|x\|^2}$  comes from the
Jacobian of changing variables
\begin{equation}\label{B-S}
\phi: x\in B^d \mapsto (x,\sqrt{1-\|x\|^2}) \in S^{d}_+:=\{y \in S^d:y_{d+1} \ge 0\}.
\end{equation}
Furthermore, under the above changing variables, we have
\begin{equation}\label{BSintegral}
\int_{S^d} g(y) d\omega(y) = \int_{B^d} \left[
g(x,\sqrt{1-\|x\|^2}\,)+
     g(x,-\sqrt{1-\|x\|^2}\,) \right]\frac{dx}{\sqrt{1-\|x\|^2}}.
\end{equation}
The orthogonal structure is preserved under the mapping
\eqref{B-S} and the study of orthogonal expansions for $W_{\k}^B$
can be essentially reduced to that of $h_\k^2$. In fact, let
$\CV_n^d(W_{\k}^B)$ denote the space of orthogonal polynomials of
degree $n$ with respect to $W_{\k}^B$ on $B^d$. The orthogonal
projection, $\proj_n(W_{\k}^B; f)$,  of $f \in L^2(W_{\k}^B;B^d)$
onto $\CV_n^d(W_{\k}^B)$ can be expressed in terms of the
orthogonal projection of $F(x,x_{d+1}):= f(x)$ onto
$\CH_n^{d+1}(h_\k^2)$:
\begin{equation}\label{projBS}
   \proj_n(W_{\k}^B; f, x) = \proj_n(h_\k^2;  F, X), \qquad \text{with}\  \  X := (x,\sqrt{1-\|x\|^2}).
\end{equation}
This relation allows us to deduce results on the convergence of orthogonal
expansions with respect to $W_\k^B$ from that of $h$-harmonic expansions.

For $d =1$ the weight $W_\k^B$ in \eqref{weightB} becomes the weight function
\begin{equation}\label{GGweight}
  w_{\k_2,\k_1} (t) = |t|^{2 \k_1} (1- t^2)^{\k_2-1/2}, \qquad \k_i \ge 0, \quad
     t \in [-1,1],
\end{equation}
whose corresponding orthogonal polynomials, $C_n^{(\k_1,\k_2)}$, are
called generalized Gegenbauer polynomials, and they can be expressed
in terms of Jacobi polynomials,
\begin{align}\label{G-Gegen}
\begin{split}
C_{2n}^{(\lambda ,\mu )}(t) &=\frac{\left( \lambda +\mu \right)_{n}}
{\left( \mu +\frac{1}{2}\right)_{n}} P_{n}^{(\lambda -1/2,\mu-1/2)}
(2t^{2}-1), \\
C_{2n+1}^{(\lambda ,\mu )}(t) &=\frac{\left( \lambda +\mu
\right)_{n+1}} {\left( \mu
+\frac{1}{2}\right)_{n+1}}tP_{n}^{(\lambda -1/2,\mu+1/2)}
(2t^{2}-1),
\end{split}
\end{align}
where $(a)_n =a(a+1)\cdots (a+n-1)$.

\subsection{Orthogonal expansions on the simplex} The weight
functions we consider on the simplex $T^d =\{x:x_1\ge 0, \ldots, x_d\ge 0,
 1-|x| \ge 0\}$ are defined by
\begin{equation}  \label{weightT}
 W_\k^T(x) := \prod_{i=1}^{d} x_i ^{\k_i-1/2}(1-|x|)^{\k_{d+1}-1/2},
      \qquad \k_i \ge 0,
\end{equation}
where $|x| := x_1+ \cdots + x_d$. They are related to $W_\k^B$, hence to
$h_\k^2$. In fact, $W_\k^T$ is exactly the product of the weight function
$W_\k^B$ under the mapping
\begin{equation}\label{psi}
\psi: (x_1,\ldots,x_d) \in T^d \mapsto (x_1^2, \ldots, x_d^2) \in B^d
\end{equation}
and the Jacobian of this change of variables. Furthermore,
the change of variables shows
\begin{equation} \label{T-B}
\int_{B^d} g(x_1^2,\ldots,x_d^2) dx = \int_{T^d} g(x_1,\ldots,x_d)
            \frac{dx}{\sqrt{x_1\cdots x_d}}.
\end{equation}
The orthogonal structure is preserved under the mapping
\eqref{psi}. Let $\CV_n^d(W_\k^T)$ denote the space of orthogonal
polynomials of degree $n$ with respect to $W_\k^T$ on $T^d$. Then
$R \in \CV_n^d(W_\k^T)$ if and only if $R\circ \psi \in
\CV_{2n}^d(W_\k^B)$. The orthogonal projection, $\proj_n(W_\k^T;
f)$,  of $f \in L^2(W_\k^T;T^d)$ onto $\CV_n^d(W_\k^T)$ can be
expressed in terms of the orthogonal projection of $f \circ \psi$
onto $\CV_{2n}^d(W_\k^B)$:
\begin{equation}\label{projTB}
  \left ( \proj_n(W_\k^T; f) \circ \psi \right)(x) =
     \frac{1}{2^d} \sum_{\ve \in \ZZ_2^d} \proj_{2n}(W_\k^B; f\circ \psi,
     x\ve),
\end{equation}
 The fact that $\proj_n(W_\k^T)$ of degree $n$ is related to
$\proj_{2n}(W_\k^B)$ of  degree $2n$ suggests that some properties
of the orthogonal expansions on $B^d$ cannot be transformed
directly to those on $T^d$. We will also need the explicit formula
for the kernel, $P_n(W_\k^T; x,y)$, of $\proj_n(W_\k^T; f)$, which
can be derived from \eqref{proj-kernel} and the quadratic
transform between Gegenbauer and Jacobi polynomials,
\begin{align} \label{proj-kernelT}
  & P_n(W_\k^T; x,y) =  \frac{(2n+\l_\k)\Gamma(\frac12)\Gamma(n+\l_k)}
      {\Gamma(\l_\k+1)\Gamma(n+\frac12)} \\
    & \qquad \times c_\k \int_{[-1,1]^{d+1}}
        P_n^{(\l_k-\frac12, - \f 12)} \left( 2z(x,y,t)^2-1\right)
                \prod_{i=1}^{d+1} (1-t_i^2)^{\k_i-1} d t, \notag
\end{align}
where $z(x,y,t) =\sqrt{x_1 y_1}\, t_1 + \ldots + \sqrt{x_d y_d}\,
 t_d+ \sqrt{1-|x|} \sqrt{1-|y|}\, t_{d+1}.$

We will also denote the Ces\`aro means for orthogonal expansions with
respect to a weight function $W$ as $S_n^\d(W;f)$ and denote their
kernel as $K_n^\d(W; x,y)$, where $W$ is either $W_\k^B$ or $W_\k^T$.

\subsection{Some estimates}

Throughout this paper we denote by $c$ a generic constant that may
depend on fixed parameters such as $\k$, $d$ and $p$, whose value may
change from line to line. Furthermore we write $A \sim B$ if $A \ge c B$
and $B \ge c A$.

Let $d (x,y) := \arccos \la x ,y \ra$ denote the geodesic
distance of $x,y \in S^d$. For $0 \le \theta  \le \pi$, the set
$$
   c(x,\theta) := \{y \in S^d:d(x,y)\le \theta\} =
         \{y \in S^d: \langle x,y\rangle \ge \cos \theta\}
$$
is called the spherical cap centered at $x$ with radius $\t$. It is shown
in \cite{D} that $h_\k$ is a doubling weight and, furthermore, the
following estimate holds:

\begin{lem}\label{lem:doubling}
For $0 \le \t \le \pi$ and $x=(x_1,\cdots, x_{d+1})\in S^d$,
\begin{equation}\label{eq:doubling}
   \int_{c(x,\t)}  h_\k^2(y) d\o(y) \sim 
     \t^d \prod_{j=1}^{d+1} (|x_j|  +\t)^{2\k_j},
\end{equation}
where the constant of equivalence depends only on $d$ and $\k$.
\end{lem}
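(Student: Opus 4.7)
By the $\ZZ_2^{d+1}$-invariance of $h_\k^2$, of $c(x,\t)$, and of the right-hand side of \eqref{eq:doubling} under coordinate reflections, I reduce at the outset to $x\in S^d$ with $x_j\ge 0$ for every $j$. Fix a small constant $\t_0$ depending only on $d$ (its value will be pinned down in the lower bound). Both sides of \eqref{eq:doubling} are continuous and strictly positive on the compact set $\{(x,\t):x_j\ge 0,\,\t_0\le\t\le\pi\}$, hence comparable via constants depending only on $d$ and $\k$; the substantive task is therefore the range $0<\t<\t_0$.

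The upper bound is direct: $y\in c(x,\t)$ forces $|y_j-x_j|\le\|y-x\|_2=2\sin(d(x,y)/2)\le\t$, so $|y_j|\le x_j+\t$. Combined with the standard volume estimate $\o(c(x,\t))\lesssim \t^d$,
$$\int_{c(x,\t)}h_\k^2(y)\,d\o(y)\;\le\;\prod_{j=1}^{d+1}(x_j+\t)^{2\k_j}\cdot \o(c(x,\t))\;\lesssim\;\t^d\prod_{j=1}^{d+1}(x_j+\t)^{2\k_j}.$$

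For the lower bound, fix a large constant $C$ depending only on $d$ and partition $\{1,\ldots,d+1\}=A\sqcup B$ with $A=\{j:x_j\ge C\t\}$ and $B=\{j:x_j<C\t\}$; then $x_j+\t\sim x_j$ on $A$ and $x_j+\t\sim\t$ on $B$. Choosing $\t_0$ so that $(d+1)(C\t_0)^2\le 1/2$ gives $\|x_A\|^2=1-\|x_B\|^2\ge 1/2$. I construct $E\subset c(x,\t)$ with $\o(E)\gtrsim\t^d$ and $y_j\gtrsim x_j+\t$ for every $j$ on $E$, whence $h_\k^2(y)\gtrsim\prod_j(x_j+\t)^{2\k_j}$ throughout. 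Parametrize $y=\cos\phi\,x+\sin\phi\,v$ with $\phi\in[\t/2,\t]$ and $v\in\Sigma_x$, the unit sphere of $x^\perp$; the surface measure decomposes as $(\sin\phi)^{d-1}d\phi\,d\s(v)\sim\t^{d-1}d\phi\,d\s(v)$. Take $E$ to be the image of $[\t/2,\t]\times U_x$, where $U_x\subset\Sigma_x$ is a subset on which $v_j\ge c_0$ for all $j\in B$. For $j\in A$, $y_j\ge\cos\t\cdot x_j-\sin\t\ge x_j/2-\t\gtrsim x_j$ once $C$ is large; for $j\in B$, the nonnegativity of $x_j$ gives $y_j\ge\sin(\t/2)\cdot v_j\gtrsim\t$ with no cancellation.

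The main obstacle is the uniform lower bound $\s(U_x)\gtrsim 1$. Since $\|x_A\|\gtrsim 1$ and $|B|\le d$ (the case $|B|=d+1$ would contradict $\|x\|=1$ through $\|x_B\|^2<1/2$), an explicit admissible $v^*\in U_x$ can be produced by setting $v^*_j=c_1$ for $j\in B$ and solving $v^*_A\cdot x_A=-c_1\sum_{j\in B}x_j$ under $\|v^*_A\|^2=1-|B|c_1^2$; the right-hand side of the linear equation has magnitude $\lesssim c_1 C\t|B|$, which is much smaller than $\|x_A\|\sqrt{1-|B|c_1^2}$ provided $c_1$ is a sufficiently small constant depending only on $d$, so the system is solvable. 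A small $\s$-ball around $v^*$ inside $\Sigma_x$ is then contained in $U_x$ (taking $c_0<c_1$), and a compactness argument stratified by the finitely many possible subsets $B\subsetneq\{1,\ldots,d+1\}$ makes its $\s$-measure bounded below by a constant depending only on $d$. Assembling the pieces, $\o(E)\sim \t\cdot\t^{d-1}\cdot\s(U_x)\sim\t^d$, completing the lower bound.
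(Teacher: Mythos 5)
The paper itself does not prove this lemma; it quotes it from \cite{D}, so there is no internal proof to compare against. Your approach is the natural direct one: upper bound from the pointwise estimate $|y_j|\le|x_j|+\t$ on $c(x,\t)$ combined with $\o(c(x,\t))\lesssim\t^d$, and lower bound by exhibiting a set $E\subset c(x,\t)$ of volume $\gtrsim\t^d$ on which $y_j\gtrsim|x_j|+\t$ for every $j$. The upper bound, the surface-measure decomposition, and the verification of $y_j\gtrsim x_j+\t$ on $E$ for $j\in A$ and $j\in B$ are all correct.

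There is, however, a genuine gap in the construction of $v^*$. Fixing $v^*_j=c_1$ for $j\in B$ and demanding simultaneously $v^*_A\cdot x_A=-c_1\sum_{j\in B}x_j$ and $\|v^*_A\|^2=1-|B|c_1^2$ is solvable only when $|A|\ge 2$, because one then has an $(|A|-1)$-dimensional direction orthogonal to $x_A$ in which to adjust the norm; strict Cauchy--Schwarz guarantees nothing more. When $|A|=1$, say $A=\{j_0\}$ with $x_{j_0}\ge1/\sqrt2$ (which certainly occurs, e.g. $x$ near $e_{j_0}$ with $B=\{1,\ldots,d+1\}\setminus\{j_0\}$), the two conditions force $|v^*_{j_0}|$ to equal both $c_1|\sum_{j\in B}x_j|/x_{j_0}=O(\t)$ and $\sqrt{1-|B|c_1^2}\sim 1$, which is impossible for small $\t$. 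The repair is minor: set $v^*_j$ equal to a common value over $B$, solve the single orthogonality relation for $v^*_A$, and normalize the resulting vector afterward; the normalized entries over $B$ are then bounded below by a constant depending only on $d$. You should also spell out the closing compactness step a bit more carefully, since $B=B(x,\t)$ jumps as $(x,\t)$ varies: one should stratify over the finitely many possible $B$ and, for each fixed $B$, argue on the compact set $\{x\in S^d:x_j\ge0,\ \|x_{B^c}\|\ge1/\sqrt2\}$, on which $x\mapsto\s(U_x)$ is continuous and strictly positive. With these repairs the argument is complete.
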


We refer to the remarkable paper \cite{MT} of  Mastroianni  and
Totik for various polynomial inequalities with doubling weights.

The Jacobi polynomials $P_n^{(\a,\b)}$ are orthogonal with respect to
the weight
$$
   w^{(\a,\b)}(t) := (1-t)^\a (1+t)^{\be}, \qquad  \quad t \in [-1,1].
$$
We will need the following estimate from \cite[p. 169]{Szego}:

\begin{lem} \label{lem:3.2}
For  $\alpha\ge \be$ and $t \in [0,1]$,
\begin{equation} \label{Est-Jacobi}
|P_n^{(\alpha,\beta)} (t)| \le c n^{-1/2} (1-t+n^{-2})^{-(\alpha+1/2)/2}.
\end{equation}
The estimate on $[-1,0]$ follows from the fact that $P_n^{(\alpha,\beta)}
(t) = (-1)^nP_n^{(\beta, \alpha)} (-t)$.
\end{lem}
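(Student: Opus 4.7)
The plan is to split $[0,1]$ at the transition scale $1-t \sim n^{-2}$ and combine two classical estimates from Szeg\H{o}. Write $t = \cos\theta$ with $\theta \in [0,\pi/2]$, so that $1-t \sim \theta^2$.

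In the bulk regime $c/n \le \theta \le \pi/2$, I would invoke the Darboux-type asymptotic for Jacobi polynomials,
\[
 P_n^{(\alpha,\beta)}(\cos\theta) = n^{-1/2}\, k(\theta)\,\cos(N_n \theta + \gamma_n) + O\bigl(n^{-3/2}\theta^{-\alpha-3/2}\bigr),
\]
where $k(\theta) \sim \theta^{-\alpha-1/2}$ on this range and $N_n,\gamma_n$ are the usual linear parameters. This yields $|P_n^{(\alpha,\beta)}(\cos\theta)| \le c\, n^{-1/2}\, \theta^{-\alpha-1/2}$, equivalently $|P_n^{(\alpha,\beta)}(t)| \le c\, n^{-1/2}(1-t)^{-(\alpha+1/2)/2}$. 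Since $1-t \ge c/n^2$ here, the additive $n^{-2}$ in \eqref{Est-Jacobi} is absorbed up to a constant.

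In the endpoint regime $0 \le \theta \le c/n$ (equivalently $1-t \le c\, n^{-2}$), I would use the normalization $P_n^{(\alpha,\beta)}(1) = \binom{n+\alpha}{n} \sim n^\alpha$ together with the bound $|P_n^{(\alpha,\beta)}(t)| \le c\, P_n^{(\alpha,\beta)}(1)$ valid uniformly for $\theta \le c/n$. The latter follows either by integrating the derivative estimate $|\partial_t P_n^{(\alpha,\beta)}(t)| = O(n^{\alpha+2})$ on $[1-c/n^2,1]$ against the interval length $c/n^2$, or by matching the Darboux asymptotic at $\theta = c/n$. One checks that in this regime $n^{-1/2}(1-t+n^{-2})^{-(\alpha+1/2)/2} \sim n^{-1/2}\, n^{\alpha+1/2} = n^\alpha$, so the right-hand side of \eqref{Est-Jacobi} has the correct order.

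Gluing the two regimes gives the claim on $[0,1]$; the hypothesis $\alpha \ge \beta$ is what makes $t=1$ the dominant endpoint (since $|P_n^{(\alpha,\beta)}(-1)| = \binom{n+\beta}{n} \sim n^\beta$ is smaller), and the reflection identity recorded in the lemma extends the bound to $[-1,0]$. The only subtlety is matching constants at the transition $\theta \sim 1/n$, but both estimates are of order $n^\alpha$ there, so no deeper obstacle arises; this is why Szeg\H{o} is invoked directly in the statement rather than reproved.
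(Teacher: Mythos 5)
The paper does not prove this lemma at all: it is imported verbatim from Szeg\H{o}'s book (cited as \cite[p.~169]{Szego}), so there is no internal argument to compare against. Your reconstruction is the standard one and is essentially correct: in the bulk range $\theta = \arccos t \in [c/n, \pi/2]$ the Darboux asymptotic
(Szeg\H{o}, Theorem~8.21.13) gives both the main term and the error of size $O(n^{-3/2}\theta^{-\alpha-3/2})$, and since $n\theta \ge c$ there the error is dominated by the main term, yielding $|P_n^{(\alpha,\beta)}(\cos\theta)| \le c\, n^{-1/2}\theta^{-\alpha-1/2}$, which matches the right side of \eqref{Est-Jacobi} because $1-t+n^{-2} \sim 1-t \sim \theta^2$ in that range. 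Near the endpoint $\theta \le c/n$ the right side is of order $n^\alpha$, and the bound $|P_n^{(\alpha,\beta)}| \le c\, n^\alpha$ follows most cleanly from Szeg\H{o}'s Theorem~7.32.1 (that the global maximum on $[-1,1]$ is $P_n^{(\alpha,\beta)}(1) = \binom{n+\alpha}{n} \sim n^\alpha$ when $\alpha = \max(\alpha,\beta) \ge -\tfrac12$), rather than from a derivative estimate, though your integration argument also works. The one caveat you should make explicit is that the endpoint bound requires $\alpha \ge -\tfrac12$ in addition to $\alpha \ge \beta$; if $\alpha < -\tfrac12$ the sup norm is $\sim n^{-1/2}$, which exceeds $n^\alpha$, and the asserted inequality would actually fail near $t=1$. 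This hypothesis is implicit throughout the paper, where $\alpha$ is always of the form $\l_\k - \tfrac12 \ge \tfrac{d-2}2 \ge 0$, but it should be stated in a self-contained proof.
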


We will also need the estimate of the $L^p$ norm for the Jacobi polynomials
(\cite[p. 391]{Szego}: for $\a,\b, \mu > -1$ and $p >0$,
\begin{align}\label{JacobiLp}
\int_0^1 \left| P_n^{(\a,\b)}(t)\right|^p(1-t)^\mu dt \sim
    \begin{cases} n^{\a p -2\mu-2}, & p  >  p_{\a,\mu}, \\
         n^{-\f p2}\log n, & p  = p_{\a,\mu}, \\
         n^{-\f p2}, & p  <  p_{\a,\mu}.
\end{cases}
\quad p_{\a,\mu}:= \frac{2\mu+2}{\a +\f12}.
\end{align}

Recall $|\k|=\k_1+\cdots+\k_{d+1}$, $V_\k$ defined by
\eqref{eq:Vk} and the formula \eqref{projection}. The following
lemma was proved in \cite[Theorem 3.1]{DaiX}.

\begin{lem} \label{lem:VJacobi}
Assume  $\al\ge \max\{\be, |\k|-\f12\}$. Then for $x,y \in S^d$
\begin{align} \label{VJacobi}
  & \left| \int_{[-1,1]^{d+1}} P_n^{(\a,\b)}(x_1y_1t_1+ \ldots +x_{d+1}y_{d+1} t_{d+1})
       \prod_{j=1}^{d+1} (1+t_j)(1-t_j^2)^{\k_j-1} dt \right| \\
 & \qquad\quad \leq c
       n^{\al-2|\k|}\f{ \prod_{j=1}^{d+1}(|x_jy_j|+n^{-1}
       \|\bar{x}-\bar{y}\|+n^{-2})^{-\k_j}}
        {(1+n d(\bar{x},\bar{y}))^{\al+\f12-|\k|}}, \notag
\end{align}
where and throughout,  $\bar z = (|z_1|, \ldots,  |z_{d+1}|)$ for
$z=(z_1,\cdots, z_{d+1})\in S^d$.
\end{lem}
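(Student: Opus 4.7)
My plan is to prove \eqref{VJacobi} through three reductions: a sign change that moves the problem to the positive orthant in $t$, a pointwise estimate on the Jacobi polynomial, and a multi-dimensional weighted integration.

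First, write $\rho_j := |x_jy_j|$ and $\xi_j := \sign(x_jy_j)$, and substitute $\tau_j = \xi_j t_j$. This leaves the cube invariant, converts $u(x,y,t) = \sum_j x_jy_j t_j$ into $\sum_j \rho_j \tau_j$, and turns $(1+t_j)$ into $(1+\xi_j\tau_j) \le 2$. Hence the original integral is majorized by
\begin{equation*}
c \int_{[-1,1]^{d+1}} \bigl|P_n^{(\alpha,\beta)}\bigl(\textstyle\sum_j \rho_j \tau_j\bigr)\bigr| \prod_j (1-\tau_j^2)^{\k_j-1}\, d\tau.
\end{equation*}
Applying \lemref{lem:3.2} together with the reflection $P_n^{(\alpha,\beta)}(u) = (-1)^n P_n^{(\beta,\alpha)}(-u)$ and using $\alpha\ge\beta$ yields $|P_n^{(\alpha,\beta)}(u)| \le cn^{-1/2}(1-u+n^{-2})^{-(\alpha+1/2)/2}$ on $[-1,1]$. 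The key algebraic identity, valid since $\sum_j \rho_j = \langle \bar x, \bar y \rangle \in [0,1]$, is
\begin{equation*}
1 - \textstyle\sum_j \rho_j \tau_j = \sum_j \rho_j(1-\tau_j) + (1 - \langle \bar x, \bar y \rangle), \qquad 1 - \langle \bar x, \bar y \rangle \sim d(\bar x, \bar y)^2.
\end{equation*}
Setting $A := d(\bar x, \bar y)^2 + n^{-2}$, we have $1 - \sum_j \rho_j \tau_j + n^{-2} \ge c(A + \sum_j \rho_j (1-\tau_j))$, so the problem reduces to bounding
\begin{equation*}
c n^{-1/2} \int_{[0,2]^{d+1}} \Bigl(A + \textstyle\sum_j \rho_j s_j\Bigr)^{-(\alpha+1/2)/2} \prod_j s_j^{\k_j-1}(2-s_j)^{\k_j-1}\,ds
\end{equation*}
after the substitution $s_j = 1 - \tau_j$.

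For the final integration, the contribution from $s_j$ near $2$ (i.e., $\tau_j$ near $-1$) is a lower-order term because $\rho_j(1-\tau_j) \sim 2\rho_j$ is then maximal, and is treated separately by a routine split. The main contribution comes from $s_j$ near $0$, where $(1-\tau_j^2)^{\k_j-1} \sim s_j^{\k_j-1}$. A dyadic decomposition in the $s_j$'s combined with iterated application of the one-variable identity $\int_0^\infty (B+\rho s)^{-\sigma} s^{\k-1}\,ds \sim B^{\k-\sigma}(\rho+B)^{-\k}$ (valid for $\sigma > \k$) produces an estimate of the required form. Rewriting $A^{1/2} \sim \|\bar x - \bar y\| + n^{-1}$ and regrouping the powers of $n$ coming from the $n^{-1/2}$ prefactor and from $A^{|\k|-(\alpha+1/2)/2}$ then yields the $n^{\alpha - 2|\k|}$ prefactor and the denominator $(1 + nd(\bar x, \bar y))^{\alpha + 1/2 - |\k|}$ in \eqref{VJacobi}.

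The delicate step is the iterated dyadic integration. The hypothesis $\alpha \ge |\k| - \tfrac12$ is used precisely to ensure that the iterated one-variable integrals converge at infinity (coordinate by coordinate), and the sharp form $\prod_j (\rho_j + n^{-1}\|\bar x - \bar y\| + n^{-2})^{-\k_j}$ requires careful bookkeeping that tracks the relative sizes of each $\rho_j$ against the common scale $A$. A secondary technical nuisance is the degenerate case $\k_j = 0$, in which the weight $(1-\tau_j^2)^{\k_j-1}$ is not locally integrable and must be interpreted via the limit relation $\lim_{\lambda\to 0} c_\lambda \int g(t)(1+t)(1-t)^{\lambda-1}dt = g(1)$, effectively collapsing the $j$-th variable to $\tau_j = 1$ and running the same argument in one fewer dimension.
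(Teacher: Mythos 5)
The paper does not give a proof of this lemma; it is cited from \cite[Theorem 3.1]{DaiX}. So I assess your argument on its own merits, and there is a fundamental gap.

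By bringing the absolute value inside the integral and replacing $P_n^{(\a,\b)}$ with the pointwise magnitude bound from Lemma~\ref{lem:3.2}, you discard the oscillation of the Jacobi polynomial. The estimate \eqref{VJacobi}, however, is sharp precisely because of cancellation coming from this oscillation. To see the failure concretely, take $d+1=2$, $x=(\tf1{\sqrt2},\tf1{\sqrt2})$, $y=(1,0)$, so that $|x_1y_1|=\tf1{\sqrt 2}$, $|x_2y_2|=0$, and $d(\bar x,\bar y)=\tf{\pi}{4}$, hence $1+n\,d(\bar x,\bar y)\sim n$ and $n^{-1}\|\bar x - \bar y\|+n^{-2}\sim n^{-1}$. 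The right-hand side of \eqref{VJacobi} is then
\[
n^{\a-2|\k|}\cdot\f{(\tf1{\sqrt 2}+n^{-1})^{-\k_1}\,(n^{-1})^{-\k_2}}{n^{\a+\f12-|\k|}}\sim n^{-\k_1-\f12}.
\]
But the argument of $P_n^{(\a,\b)}$ ranges over $[-\tf1{\sqrt2},\tf1{\sqrt2}]$, where $|P_n^{(\a,\b)}(u)|\sim n^{-\f12}$, so after taking absolute values inside your reduced integral is
\[
\gtrsim n^{-\f12}\int_{[-1,1]^2}(1-t_1^2)^{\k_1-1}(1-t_2^2)^{\k_2-1}\,dt\sim n^{-\f12},
\]
which overshoots the target by a factor $n^{\k_1}$ whenever $\k_1>0$. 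That extra decay $n^{-\k_1}$ in the true answer arises from integrating the oscillation (frequency $\sim n$) of $P_n^{(\a,\b)}$ against the endpoint singularity $(1-t_1)^{\k_1-1}$ of the weight, and it cannot be recovered once the absolute value has been moved inside. Two secondary issues compound this: first, \eqref{Est-Jacobi} is valid only on $[0,1]$ (for $u<0$ the reflected exponent $(\b+\f12)/2$ appears, which is smaller when $\a>\b$, so the claimed bound is false near $u=-1$); second, bounding $(1+\xi_j\tau_j)\leq 2$ replaces the weight $(1+t_j)(1-t_j^2)^{\k_j-1}=(1+t_j)^{\k_j}(1-t_j)^{\k_j-1}$, singular at only one endpoint, by $(1-\tau_j^2)^{\k_j-1}$, singular at both, which is a further genuine loss and not a lower-order split.

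A correct proof of \eqref{VJacobi} must preserve the oscillation. The technique used in Lemma~\ref{lem-2-2} of this paper is indicative: one integrates by parts $\ell_j$ times in each $t_j$-variable (using antiderivatives of the Jacobi polynomial, as encoded by the class $\CS_n^v(\mu)$), splits the $t$-domain according to the scales $B_j$, and carefully tracks the resulting powers of $n$ and of the $|x_jy_j|$. The pointwise bound on $|P_n^{(\a,\b)}|$ alone is not enough.
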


This lemma plays an essential role in the proof of a sharp pointwise
estimate for the kernel $K_n^\d(h_\k^2;f)$ in \cite{DaiX}. For the present
paper we will only need the pointwise estimate for $P_n(h_\k^2;x,y)$:

\begin{lem} \label{thm:S-estimate}
Let  $x, y \in S^{d}$. Then
\begin{align} \label{eq:S-est}
|P_n(h_\kappa^2; x,y)| \le \;  c
\frac{ \prod_{j=1}^{d+1}(|x_jy_j|+n^{-1}\|\bar x- \bar y\|+n^{-2})^{-\kappa_j}}
 {n^{-(d-1)/2} (\|\bar x - \bar y\|+ n^{-1})^{(d-1)/2} }.
\end{align}
\end{lem}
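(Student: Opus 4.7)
The plan is to deduce this directly from \lemref{lem:VJacobi} by expressing the Gegenbauer polynomial as a symmetric Jacobi polynomial. Recall from \eqref{proj-kernel} that
\[
P_n(h_\k^2;x,y) = \tfrac{n+\l_\k}{\l_\k} V_\k\bigl[C_n^{\l_\k}(\la\,\cdot\,,y\ra)\bigr](x),
\]
and the classical identity $C_n^\l(t) = \tfrac{(2\l)_n}{(\l+1/2)_n}\,P_n^{(\l-1/2,\l-1/2)}(t)$ together with Stirling yields $(n+\l_\k)(2\l_\k)_n/[\l_\k(\l_\k+1/2)_n] \sim n^{\l_\k+1/2}$ as $n\to\infty$. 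Thus
\[
P_n(h_\k^2;x,y) \;\sim\; n^{\l_\k + 1/2}\, V_\k\bigl[P_n^{(\l_\k-1/2,\l_\k-1/2)}(\la\,\cdot\,,y\ra)\bigr](x),
\]
reducing the problem to estimating the right-hand side.

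First I would verify that \lemref{lem:VJacobi} applies with $\a=\be=\l_\k-1/2 = (d-2)/2+|\k|$: the condition $\a \ge \max\{\be,|\k|-\tfrac12\}$ amounts to $(d-2)/2 \ge -1/2$, which holds for $d\ge 1$. Substituting into \eqref{VJacobi} gives
\[
\bigl|V_\k[P_n^{(\a,\a)}(\la\cdot,y\ra)](x)\bigr| \;\le\; c\,n^{\a-2|\k|}\, \frac{\prod_{j=1}^{d+1}(|x_jy_j|+n^{-1}\|\bar x-\bar y\|+n^{-2})^{-\k_j}}{(1+n\,d(\bar x,\bar y))^{\a+\frac12-|\k|}}.
\]
Combining exponents yields $\l_\k+\tfrac12 + \a - 2|\k| = 2\l_\k - 2|\k| = d-1$ in the prefactor and $\a+\tfrac12-|\k| = (d-1)/2$ in the denominator, so
\[
|P_n(h_\k^2;x,y)| \;\le\; c\, n^{d-1}\,\frac{\prod_{j=1}^{d+1}(|x_jy_j|+n^{-1}\|\bar x-\bar y\|+n^{-2})^{-\k_j}}{(1+n\,d(\bar x,\bar y))^{(d-1)/2}}.
\]

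The last step is to rewrite the denominator in the form claimed. Since $\bar x, \bar y$ both lie in the closed first orthant of $S^d$, their geodesic distance is comparable to their Euclidean distance, $d(\bar x,\bar y) \sim \|\bar x - \bar y\|$. Pulling out a factor of $n$, we have
\[
\frac{n^{d-1}}{(1+n\,d(\bar x,\bar y))^{(d-1)/2}} \;=\; \frac{n^{(d-1)/2}}{(n^{-1}+d(\bar x,\bar y))^{(d-1)/2}} \;\sim\; \frac{n^{(d-1)/2}}{(n^{-1}+\|\bar x - \bar y\|)^{(d-1)/2}},
\]
which is precisely the factor in \eqref{eq:S-est}.

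Since all the analytic work is packaged into \lemref{lem:VJacobi} (whose proof is cited to the authors' earlier paper), the real content of this lemma is essentially bookkeeping: choosing the correct Jacobi parameters, tracking the Gegenbauer-to-Jacobi normalization constant, and exploiting the equivalence of geodesic and chordal distance on the orthant. There is no genuine obstacle; the only point requiring minor care is confirming that all powers of $n$ collapse to the stated exponents $(d-1)$ and $(d-1)/2$, which they do because $\l_\k-|\k| = (d-1)/2$ independently of $\k$.
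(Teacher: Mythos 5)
Your proof is correct and follows exactly the route the paper sets up: the paper states \lemref{thm:S-estimate} without proof (deferring to \cite{DaiX}), but immediately after introducing \lemref{lem:VJacobi} as the key tool, and your derivation -- rewriting $C_n^{\l_\k}$ as $P_n^{(\l_\k-1/2,\l_\k-1/2)}$ with the Stirling-asymptotic normalization $\sim n^{\l_\k+1/2}$, applying \eqref{VJacobi} with $\a=\b=\l_\k-1/2$, and collapsing exponents via $\l_\k-|\k|=(d-1)/2$ -- is precisely the intended calculation. One minor remark: for the last step you only need the one-sided inequality $\|\bar x-\bar y\|\le d(\bar x,\bar y)$ (chord at most arc), though the two-sided comparability $d(\bar x,\bar y)\sim\|\bar x-\bar y\|$ does hold here since $\bar x,\bar y$ lie in the first orthant and hence $d(\bar x,\bar y)\le\pi/2$.
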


The kernel $P_n(W_\k^B)$ can be derived from \eqref{eq:S-est} and
will not be needed. We will need, however, the estimate for the
kernel $P_n(W_\k^T)$, which is also proved in \cite{DaiX}.

\begin{lem} \label{thm:T-estimate}
For $x =(x_1,\ldots,x_d) \in T^d$ and $y =(y_1,\ldots,y_d) \in T^d$,
\begin{align} \label{eq:T-est}
 |P_n (W_{\kappa}^T; x,y)| \le & \; c
  \frac{ \prod_{j=1}^{d+1} (\sqrt{x_j y_j} + n^{-1} \|\xi- \zeta\|+
    n^{-2})^{- \kappa_j} }
 { n^{ - (d-1)/2} (\|\xi - \zeta\|+ n^{-1})^{(d-1)/2}},
\end{align}
where $
\xi := (\sqrt{x_1}, \ldots, \sqrt{x_d}, \sqrt{x_{d+1}})$, 
$\zeta := (\sqrt{y_1}, \ldots, \sqrt{y_d}, \sqrt{y_{d+1}})
$
with $x_{d+1}:=1-|x|$ and $y_{d+1}:=1-|y|$.
\end{lem}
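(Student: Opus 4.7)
My plan is to reduce the simplex kernel estimate to Lemma~\ref{lem:VJacobi} applied on the sphere via the square-root map $x\mapsto (\sqrt{x_1},\dots,\sqrt{x_{d+1}})$, together with a parity trick to convert the simplex weight factors into the form the sphere lemma needs.

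The starting point is the explicit formula \eqref{proj-kernelT}, which expresses $P_n(W_\k^T;x,y)$ as a prefactor times the integral
$$
I_n(x,y) := \int_{[-1,1]^{d+1}} P_n^{(\l_\k-\f12,-\f12)}\!\left(2z(x,y,t)^2-1\right)\prod_{i=1}^{d+1}(1-t_i^2)^{\k_i-1}\,dt,
$$
with $z(x,y,t)=\sum_{j=1}^{d+1}\sqrt{x_jy_j}\,t_j=\sum_{j=1}^{d+1}\xi_j\zeta_j t_j$. Since $P_n^{(\l_\k-\f12,-\f12)}(2z^2-1)$ is a polynomial in $z^2$, the integrand is \emph{even} in each $t_i$. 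Every term in the expansion $\prod_j(1+t_j)=\sum_{S\subset\{1,\dots,d+1\}}\prod_{i\in S}t_i$ with $S\neq\emptyset$ then integrates to zero against $\prod(1-t_i^2)^{\k_i-1}$, so
$$
I_n(x,y) = \int_{[-1,1]^{d+1}} P_n^{(\l_\k-\f12,-\f12)}\!\left(2z(x,y,t)^2-1\right)\prod_{i=1}^{d+1}(1+t_i)(1-t_i^2)^{\k_i-1}\,dt.
$$
This rewrites $I_n$ in exactly the form covered by Lemma~\ref{lem:VJacobi}.

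Next I observe that $\xi,\zeta\in S^d$: indeed $\sum_{i=1}^{d+1}x_i=1$ (using $x_{d+1}=1-|x|$), so $\|\xi\|^2=1$, and likewise $\|\zeta\|=1$. Moreover $\xi_j,\zeta_j\ge0$, so $\bar\xi=\xi$, $\bar\zeta=\zeta$, and $|\xi_j\zeta_j|=\sqrt{x_jy_j}$. I now apply Lemma~\ref{lem:VJacobi} with $\a=\l_\k-\tfrac12$, $\b=-\tfrac12$, and the points $\xi,\zeta\in S^d$. The hypotheses are satisfied: $\a\ge\b$ is clear, and $\a\ge|\k|-\f12$ reduces to $(d-1)/2\ge 0$. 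Using $d(\xi,\zeta)\sim\|\xi-\zeta\|$ on the sphere, so $1+n\,d(\xi,\zeta)\sim n(\|\xi-\zeta\|+n^{-1})$, the lemma yields
$$
|I_n(x,y)|\le c\, n^{\a-2|\k|}\,
\frac{\prod_{j=1}^{d+1}\bigl(\sqrt{x_jy_j}+n^{-1}\|\xi-\zeta\|+n^{-2}\bigr)^{-\k_j}}{[\,n(\|\xi-\zeta\|+n^{-1})\,]^{\a+\f12-|\k|}}.
$$
With $\a=\l_\k-\tfrac12$ one has $\a-2|\k|=\f{d-1}2-|\k|-\f12$ and $\a+\f12-|\k|=\f{d-1}2$.

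To finish, I multiply by the prefactor in \eqref{proj-kernelT}. Stirling gives $\Gamma(n+\l_\k)/\Gamma(n+\f12)\sim n^{\l_\k-\f12}=n^{(d-2)/2+|\k|}$, and $(2n+\l_\k)\sim n$, so the prefactor is $\sim n^{d/2+|\k|}$. Combining with the bound on $|I_n|$, the powers of $n$ assemble as $n^{d/2+|\k|}\cdot n^{-|\k|-\f12}\cdot n^{-(d-1)/2}=n^{(d-1)/2}$ in the numerator, matching exactly the claimed form \eqref{eq:T-est}. The only step requiring care is the parity argument in the first paragraph and the verification that $\xi,\zeta$ lie on $S^d$ with nonnegative coordinates; once those are in hand, everything reduces to Lemma~\ref{lem:VJacobi} and bookkeeping of gamma ratios. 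The main (minor) obstacle is tracking the gamma/constant factors correctly so that the final powers of $n$ agree with \eqref{eq:T-est}.
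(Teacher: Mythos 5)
Your plan---reduce to Lemma~\ref{lem:VJacobi} by passing to $\xi,\zeta\in S^d$---is the right idea and the final power count comes out correctly, but the argument as written has two genuine gaps.

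First, the parity trick is false. You claim that $P_n^{(\l_\k-\f12,-\f12)}(2z^2-1)\prod(1-t_i^2)^{\k_i-1}$ is even in each $t_i$, so that the nonempty-$S$ terms from $\prod(1+t_j)$ integrate to zero. But $z^2=(\sum_j\xi_j\zeta_j t_j)^2$ contains the cross terms $2\xi_i\zeta_i\xi_j\zeta_j t_it_j$, so the integrand is even only under the \emph{simultaneous} sign flip $t\mapsto -t$, not in each $t_i$ separately. Concretely, for $|S|$ even and $\ge 2$ the term survives: with $d+1=2$, $n=1$, one has $P_1^{(\a,-\f12)}(2z^2-1)=(\a+\tf32)z^2+\text{const}$, and
$\int_{[-1,1]^2} z^2\,t_1t_2\,(1-t_1^2)^{\k_1-1}(1-t_2^2)^{\k_2-1}\,dt
=2\xi_1\zeta_1\xi_2\zeta_2\int t_1^2t_2^2(1-t_1^2)^{\k_1-1}(1-t_2^2)^{\k_2-1}\,dt\neq 0$.
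So the two integrals (with and without $\prod(1+t_j)$) are not equal, and you cannot literally reduce to the form in Lemma~\ref{lem:VJacobi} this way. The correct fix is simpler: note that the conclusion of Lemma~\ref{lem:VJacobi} holds equally with or without the bounded smooth factor $\prod(1+t_j)$ (the proof of Lemma~\ref{lem-2-2} only uses boundedness of that factor and its derivatives), so no symmetrization is needed.

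Second, even granting the reduction, Lemma~\ref{lem:VJacobi} bounds
$\int P_n^{(\a,\b)}\bigl(\sum_j x_jy_jt_j\bigr)\prod(1+t_j)(1-t_j^2)^{\k_j-1}\,dt$,
i.e.\ the Jacobi polynomial evaluated at the \emph{linear} argument. Your integrand is $P_n^{(\l_\k-\f12,-\f12)}\bigl(2z(x,y,t)^2-1\bigr)$, i.e.\ $P_n^{(\a,\b)}$ composed with $2z^2-1$; the lemma does not cover this as stated. You must first invoke the quadratic transformation $P_n^{(\a,-1/2)}(2u^2-1)=c_n\,P_{2n}^{(\a,\a)}(u)$ with $c_n\sim 2^{-\a}$ bounded (this is \cite[(4.1.5)]{Szego}, and is exactly what the paper does when it estimates \eqref{eq:intVTP} in the proof of Theorem~\ref{thm:CesaroBT} for $T^d$), and only then apply Lemma~\ref{lem:VJacobi} with $\a=\b=\l_\k-\f12$ and degree $2n$. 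Fortuitously the exponents in the lemma depend only on $\a$ and replacing $n$ by $2n$ changes nothing asymptotically, which is why your bookkeeping happens to land on the right answer; but as written, the appeal to Lemma~\ref{lem:VJacobi} with $\b=-\tfrac12$ on the composed argument is not a valid application of the lemma.
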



\section{Main Results}
\setcounter{equation}{0}

\subsection{$h$-harmonic expansions}

For $h_\k$ defined in \eqref{weight}, we denote the $L^p$ norm of
$L^p(h_\k^2;S^d)$ by $\|\cdot \|_{\k,p}$,
$$
   \|f\|_{\k,p} : = \left( a_\k \int_{S^d} |f(y)|^p h_\k^2(y) d\o(y) \right)^{1/p}
$$
for $1 \le p  < \infty$ and with the usual understanding that it is the
uniform norm on $S^d$ when $p=\infty$.  Recall that
$$
   \s_\k : = \tf{d-1}2 + |\k| - \k_{\rm min} \quad \hbox{with} \quad
      \k_{\rm min}:=\min_{1\le j \le d+1} \k_j.
$$

Our main results on the Ces\`aro summation of $h$-harmonic
expansions are the following two theorems:

\begin{thm}\label{thm:Cesaro}
Suppose that $f\in L^p(h_\k^2; S^d)$, $1\leq p\leq  \infty$,
$|\f1p-\f12|\ge \f 1{2\sa_\k+2}$ and
\begin{equation}\label{delta(p)}
\da>\da_\k(p):=\max\{(2\sa_\k+1)|\tf1p-\tf12|-\tf12,0\}.
\end{equation}
Then $S_n^\d (h_\k^2;f)$ converges to $f$ in $L^p(h_\k^2;S^d)$ and
$$
  \sup_{n\in\mathbb{N}}\|S_n^\da(h_\k^2;f)\|_{\k,p}\leq c \|f\|_{\k,p}.
$$
\end{thm}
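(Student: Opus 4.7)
The plan is to establish uniform $L^p$ boundedness of $S_n^\delta(h_\k^2;\cdot)$ by Stein's analytic interpolation along the analytic family $z \mapsto S_n^z(h_\k^2;\cdot)$, pivoting on the endpoint $p_c$ defined by $|\tfrac1{p_c} - \tfrac12| = \tfrac1{2\sigma_\k+2}$, where the critical index takes the value $\delta_\k(p_c) = \sigma_\k/(2\sigma_\k+2)$. Since $\delta_\k(p) = \delta_\k(p')$, and the Dunkl convolution \eqref{convolution} together with \eqref{projf*g} makes $S_n^\delta$ essentially self-dual on $L^p(h_\k^2;S^d)$, it is enough to treat $p \in [p_c,\infty]$ and then to appeal to duality.

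The two easy ingredients for the interpolation are the $L^2$ bound for $S_n^z$ on the vertical line $\mathrm{Re}(z) = -\tfrac12 + \varepsilon$, obtained from Plancherel applied to the Gegenbauer Ces\`aro multipliers with standard polynomial-in-$|\mathrm{Im}(z)|$ control, and the $L^\infty$ bound for $S_n^z$ on $\mathrm{Re}(z) = \sigma_\k + \varepsilon$, which is essentially the statement recalled in \eqref{critical-index} from \cite{LX} together with a routine Phragm\'en--Lindel\"of growth estimate. Stein's analytic interpolation theorem then reduces \thmref{thm:Cesaro} to the single endpoint inequality $\|S_n^{\delta_\k(p_c)}(h_\k^2;f)\|_{\k,p_c} \le c\|f\|_{\k,p_c}$; the remaining range $p_c < p \le \infty$ follows by interpolating this endpoint against the $L^\infty$ bound above, and the range $1 \le p \le p_c'$ is recovered by duality.

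For the endpoint itself I would follow the Bonami--Clerc paradigm: using Abel summation, the operator $S_n^\delta$ is rewritten as an average of partial-sum operators, which reduces the $L^{p_c}$ bound to a suitable operator-norm estimate for $\proj_n(h_\k^2;\cdot)$. In the unweighted setting Sogge was able to feed in the \emph{global} $L^{p_c} \to L^2$ bound on $\proj_n$; as the introduction stresses, however, the sphere equipped with $h_\k^2\,d\o$ is no longer homogeneous and the singularities of $h_\k$ along the coordinate great circles prevent this global strategy from being sharp. Instead I would decompose $S^d$ into a finitely overlapping family of spherical caps of radius $\sim 1/n$ adapted to the doubling measure via \lemref{lem:doubling}, apply the cap-localized projection estimate of Theorem \ref{thm:proj-cap} on the diagonal caps, and use the off-diagonal pointwise decay furnished by \lemref{lem:VJacobi} and \lemref{thm:S-estimate} to control the cross terms in the resulting double sum.

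The principal obstacle will be carrying out this endpoint $L^{p_c}$ estimate. The geometric weights $\prod_j(|x_j|+\theta)^{2\k_j}$ appearing in \lemref{lem:doubling} have to be matched carefully against the pointwise factors $\prod_j(|x_jy_j|+n^{-1}\|\bar x -\bar y\| + n^{-2})^{-\k_j}$ of \lemref{thm:S-estimate}, and one must verify that the Abel--summation reconstruction of $S_n^\delta$ remains uniformly summable in $n$ once local rather than global projection norms are used. Once this cap-localized endpoint bound is established, everything else (the $L^2$ and $L^\infty$ strip bounds, the analytic interpolation, and the final duality step) is essentially standard.
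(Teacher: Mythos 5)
There is a genuine gap, and it appears at the center of your plan. You propose to run Stein's analytic interpolation on the family $z\mapsto S_n^z(h_\k^2;\cdot)$ and to ``reduce'' Theorem~\ref{thm:Cesaro} to an \emph{endpoint} Ces\`aro bound
$$
  \sup_n \|S_n^{\da_\k(p_c)}(h_\k^2;f)\|_{\k,p_c}\le c\|f\|_{\k,p_c},
  \qquad |\tf1{p_c}-\tf12|=\tf1{2\sa_\k+2}.
$$
But $\da_\k(p_c)=\sa_\k/(2\sa_\k+2)>0$ for $d\ge 2$, so this is precisely the bound that Theorem~\ref{thm:Cesaro2} rules out: for $0<\da\le\da_\k(p)$ there exists $f\in L^p$ whose Ces\`aro means diverge, hence by the uniform boundedness principle $\sup_n\|S_n^{\da_\k(p_c)}\|_{L^{p_c}\to L^{p_c}}=\infty$. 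Your proposed intermediate target is therefore false, and no amount of care with the $L^2$ and $L^\infty$ strip estimates can repair this --- the endpoint in the Ces\`aro scale simply does not hold. (Also note the logical direction: analytic interpolation propagates bounds from the boundary of a strip into the interior; it cannot ``reduce'' an interior bound to an unproved boundary bound.)

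The mechanism the paper actually uses is different and exploits the strict inequality $\da>\da_\k(p)$ from the start. Following the compact-manifold strategy of \cite{So2}, one decomposes the Ces\`aro multiplier dyadically, $S_n^\da=\sum_{v}S_{n,v}^\da+\f1{A_n^\da}\proj_n$ as in \eqref{5-2}, with $S_{n,v}^\da$ built from the smooth cutoffs $\vi(2^v(n-j)/n)$. The strict gap $\da-\da_\k(p)>0$ produces a geometric factor $2^{-v\va_0}$ in the estimate \eqref{5-3} of each dyadic piece, and the series sums --- there is no endpoint to hit. Within the estimate of a single $S_{n,v}^\da$, the sphere is covered by caps of radius $2^{v(1+\g)}/n$, not $\sim 1/n$: the kernel $K_{n,v}^\da$ is spatially concentrated at scale $\sim 2^v/n$ in $d(\bar x,\bar y)$ (Lemma~\ref{lem-5-1} and Corollary~\ref{cor-5-2}), so the cap radius must track the dyadic index $v$. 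Caps of radius $1/n$ would leave essentially the entire kernel of $S_{n,v}^\da$, for $v\ge 2$, off-diagonal and unmatched to Theorem~\ref{thm:proj-cap}. Analytic interpolation does appear in the paper, but only one level down, in the proof of the cap-localized projection estimate (Theorem~\ref{thm:proj-local}, case $\g=\sa_\k-\f{d-1}2$, via the family $\mathcal P_n^z$ of \eqref{4-14-v2}); it is not applied to $S_n^z$ directly. Finally, the $L^2$ boundedness of the (real-order) Ces\`aro means requires $\mathrm{Re}(z)\ge 0$, not $\mathrm{Re}(z)>-\tf12$, because the multipliers $A_{n-j}^\da/A_n^\da$ blow up like $n^{-\da}$ at $j=0$ when $\da<0$; as stated, your $L^2$ strip estimate is also off.

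In short: the self-duality reduction and the idea of feeding Theorem~\ref{thm:proj-cap} into a decomposition of $S_n^\da$ are sound, but the decomposition must be the dyadic one in the frequency variable $n-j$, with cap sizes tied to the dyadic scale and geometric decay supplied by $\da-\da_\k(p)>0$, rather than an interpolation argument aimed at a (false) endpoint Ces\`aro bound.
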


\begin{thm} \label{thm:Cesaro2}
Assume  $1\leq p\leq \infty$ and  $0<\d\leq \da_\k (p) $. Then
there exists a function $f \in L^p(h_\k^2;S^d)$ such that $S_n^\d
(h_\k^2; f )$ diverges  in $L^p(h_\k^2;S^d)$.
\end{thm}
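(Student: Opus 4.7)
The plan is to apply the Banach-Steinhaus (uniform boundedness) principle, which reduces the divergence statement to showing
\begin{equation*}
\sup_{n\in\NN}\|S_n^\d(h_\k^2)\|_{L^p(h_\k^2)\to L^p(h_\k^2)}=\infty\quad\text{for every }0<\d\le\da_\k(p).
\end{equation*}
Since the reproducing kernels $P_j(h_\k^2;x,y)$ are symmetric in $(x,y)$, so is $K_n^\d(h_\k^2;x,y)$, and hence $S_n^\d(h_\k^2)$ is self-adjoint on $L^2(h_\k^2;S^d)$; therefore the $L^p$ and $L^{p'}$ operator norms coincide. Since $\da_\k(p)=\da_\k(p')$, I may assume $2\le p\le\infty$.

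Let $j_0$ satisfy $\k_{j_0}=\k_{\min}$ and set $x_0:=e_{j_0}\in S^d$. The doubling estimate \eqref{eq:doubling} at this point reduces to $\int_{c(x_0,\t)}h_\k^2\,d\o\sim \t^{2\s_\k+1}$, so that near $x_0$ the weighted sphere has effective dimension $2\s_\k+1$ rather than $d$; this is precisely what produces the factor $2\s_\k+1$ in $\da_\k(p)$. Moreover, at $x=x_0$ the intertwining integral \eqref{eq:Vk} collapses onto the single coordinate $y_{j_0}$, so $P_n(h_\k^2;x_0,y)$ is a generalized Gegenbauer polynomial in $y_{j_0}$, and by \eqref{G-Gegen} the kernel $K_n^\d(h_\k^2;x_0,y)$ is (up to constants) a one-variable Cesàro-Jacobi sum in $y_{j_0}$ against the weight $w_{\l_\k}$. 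Classical stationary-phase asymptotics for Jacobi polynomials together with the $L^q$ estimates \eqref{JacobiLp} then yield the sharp lower bound
\begin{equation*}
\|K_n^\d(h_\k^2;x_0,\cdot)\|_{\k,p'}\ge c\,n^{\da_\k(p)-\da}\cdot n^{(2\s_\k+1)/p}\qquad (0<\d\le\da_\k(p)),
\end{equation*}
with a logarithmic factor at the endpoint $\d=\da_\k(p)$.

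For $p=\infty$ (so $p'=1$) this immediately gives the conclusion, since $\|S_n^\d\|_{\infty\to\infty}=\sup_x\|K_n^\d(h_\k^2;x,\cdot)\|_{\k,1}$. For $2\le p<\infty$ I test $S_n^\d(h_\k^2)$ on the dual extremizer
\begin{equation*}
f_n(y):=\mathrm{sgn}\bigl(K_n^\d(h_\k^2;x_0,y)\bigr)\,\bigl|K_n^\d(h_\k^2;x_0,y)\bigr|^{p'-1},\qquad \|f_n\|_{\k,p}=\|K_n^\d(h_\k^2;x_0,\cdot)\|_{\k,p'}^{p'-1},
\end{equation*}
for which $S_n^\d(h_\k^2;f_n)(x_0)=a_\k\|K_n^\d(h_\k^2;x_0,\cdot)\|_{\k,p'}^{p'}$. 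Using the pointwise kernel estimate of \lemref{thm:S-estimate} in its Cesàro-kernel version from \cite{DaiX} one sees that $|S_n^\d(h_\k^2;f_n)(x)|$ remains comparable to this value on the cap $c(x_0,c/n)$, which has $\k$-measure $\sim n^{-(2\s_\k+1)}$. Combining these ingredients,
\begin{equation*}
\f{\|S_n^\d(h_\k^2;f_n)\|_{\k,p}}{\|f_n\|_{\k,p}}\ge c\,\|K_n^\d(h_\k^2;x_0,\cdot)\|_{\k,p'}\cdot n^{-(2\s_\k+1)/p}\ge c\,n^{\da_\k(p)-\da}\to\infty.
\end{equation*}

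The main obstacle is the sharp lower bound on $\|K_n^\d(h_\k^2;x_0,\cdot)\|_{\k,p'}$ in the sub-critical regime $\d\le\da_\k(p)$: while \cite{DaiX} provides matching upper bounds, the lower bound requires identifying the un-cancelled oscillatory contribution of the Cesàro sum near the spectral edge of the Jacobi polynomials. This is handled in direct analogy with the unweighted computations of Bonami-Clerc \cite{BC} and Sogge \cite{So1}, with the extra technical difficulty that the relevant Jacobi parameters now depend on $\l_\k$ and $\k_{j_0}=\k_{\min}$.
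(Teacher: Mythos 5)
Your proposal is fundamentally the same strategy the paper uses: reduce by self-adjointness of $S_n^\d(h_\k^2)$ to $p\ge 2$, test against a kernel-type extremizer centered at $e_{j_0}$ with $\k_{j_0}=\k_{\min}$ (exploiting that $V_\k$ collapses there and $P_n(h_\k^2;e_{j_0},\cdot)$ is a generalized Gegenbauer polynomial in $y_{j_0}$), reduce the kernel-norm lower bound to a one-variable Jacobi estimate, and combine with a Nikolskii-type inequality to convert the pointwise value $S_n^\d(h_\k^2;f_n)(e_{j_0})$ into a lower bound on $\|S_n^\d(h_\k^2;f_n)\|_{\k,p}$. You correctly identify the 1D Jacobi lower bound (the paper's Proposition \ref{prop:6.4}) as the heart of the matter, and your factor $n^{-(2\s_\k+1)/p}$ is exactly the one produced by the paper's Lemma \ref{lem:QQ}.

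Two points of difference are worth flagging. First, your justification that $|S_n^\d(h_\k^2;f_n)(x)|$ ``remains comparable'' to its value at $e_{j_0}$ on a cap of radius $c/n$ does not follow from the \emph{upper} bound on the kernel in Lemma \ref{thm:S-estimate} or its Ces\`aro analogue; a degree-$n$ polynomial can certainly vanish at a point within distance $O(1/n)$ of $e_{j_0}$. The rigorous substitute is precisely the Nikolskii-type inequality $\|Q\|_\infty\le cn^{(2\s_\k+1)/p}\|Q\|_{\k,p}$ (Lemma \ref{lem:QQ}), applied to $Q=S_n^\d(h_\k^2;f_n)$: this is how the factor $n^{-(2\s_\k+1)/p}$ actually enters, and it requires no information about how $S_n^\d(h_\k^2;f_n)$ varies near $e_{j_0}$. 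Your cap heuristic is pointing at the right scale, but the proof should replace it by this polynomial inequality. Second, you propose to prove the kernel lower bound $\|K_n^\d(h_\k^2;e_{j_0},\cdot)\|_{\k,p'}\gtrsim n^{\da_\k(p)-\da}n^{(2\s_\k+1)/p}$ for the whole range $2\le p\le\infty$ and $0<\d\le\da_\k(p)$. The paper is more economical: it proves the lower bound only at the single critical pair $p_1=\tfrac{2\s_\k+1}{\s_\k-\da}$, $q_1=p_1'$, where a $\log n$ gain appears (Proposition \ref{prop:6.4}), and then propagates the failure of uniform boundedness to all $p\ge p_1$ (and to $p\le q_1$ by duality) via Riesz--Thorin interpolation against the trivial $L^2$ bound. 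You would do well to adopt this reduction, as it confines the stationary-phase/oscillatory computation to one exponent.
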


For $\k =0$, $h_\k(x) \equiv 1$ and the spherical $h$-harmonic becomes
the ordinary spherical harmonics. Hence Theorem \ref{thm:Cesaro} is the
complete analogue of the  Sogge theorem, while Theorem \ref{thm:Cesaro2}
is the analogue of \cite[Theorem 5.2]{BC} for spherical harmonics.

For the projection operator $\proj_n(h_\k^2;f)$ we have the following
theorem which is a complete analogue of a theorem due to Sogge
\cite{So1} for spherical harmonics.

\begin{thm} \label{thm:proj}
Let $d \ge 2$ and  $n \in \mathbb{N}$. Then
\begin{enumerate}
\item[(i)] for $1 \leq  p \le \frac{2(\sa_k+1)}{\sa_k+2}$,
$$
 \left \|\proj_n(h_\k^2; f)\right \|_{\k,2}
  \le c n^{\da_\k(p)}
       \|f\|_{\k,p},
$$ with $\da_k(p)$  given in (\ref{delta(p)});
\item[(ii)] for $\frac{2(\sa_k+1)}{\sa_k+2} \le p \le 2$,
$$
  \left  \|\proj_n(h_\k^2; f)\right \|_{\k,2} \le c n^{\sa_k (\frac1p-\frac12)} \|f\|_{\k,p}.
$$
\end{enumerate}
Furthermore, the estimate (i) is sharp.
\end{thm}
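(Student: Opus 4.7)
The plan is to combine the $TT^*$ principle with Stein's analytic interpolation, in the spirit of Sogge's argument for the unweighted sphere, using the pointwise kernel bound \eqref{eq:S-est} and the weighted cap estimate \eqref{eq:doubling} in place of the homogeneity arguments available on the classical sphere. Since $\proj_n(h_\k^2;\cdot)$ is a self-adjoint projection on $L^2(h_\k^2;S^d)$, the identity
$$\|\proj_n(h_\k^2;\cdot)\|_{L^p\to L^2}^{2} = \|\proj_n(h_\k^2;\cdot)\|_{L^p\to L^{p'}}$$
reduces the task to estimating a symmetric $L^p\to L^{p'}$ norm. First I would record the two extreme endpoints: supremizing \eqref{eq:S-est} over $x,y\in S^d$ — the maximum being attained when $x=y$ lies near a vertex $e_{j_0}$ with $\k_{j_0}=\k_{\min}$ — yields $\sup_{x,y}|P_n(h_\k^2;x,y)|\le c\,n^{2\sa_\k}$, hence $\|\proj_n\|_{L^1\to L^\infty}\le c\,n^{2\sa_\k}$ and $\|\proj_n\|_{L^1\to L^2}\le c\,n^{\sa_\k}=c\,n^{\da_\k(1)}$, while the other extreme is the trivial $\|\proj_n\|_{L^2\to L^2}\le 1$.

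The crux is the intermediate endpoint $p_0=2(\sa_\k+1)/(\sa_\k+2)$, where I must prove
$$\|\proj_n(h_\k^2;\cdot)\|_{L^{p_0}\to L^{p_0'}} \le c\,n^{\sa_\k/(\sa_\k+1)}.$$
To this end I would embed $\proj_n$ into an analytic family $T^z$ of $h$-multiplier operators of Bochner--Riesz type, with $T^{z_\ast}=\proj_n$ at a prescribed base value $z_\ast$ and with the kernel of $T^z$ expressible, via \lemref{lem:VJacobi}, as a $V_\k$-transform of a Jacobi polynomial whose parameter depends linearly on $z$. On one vertical line the orthogonality of $h$-harmonics produces an $L^2\to L^2$ bound with only polynomial growth in $|\Im z|$; on a second vertical line the Jacobi estimate \eqref{Est-Jacobi}, combined with the vertex geometry dictated by \eqref{eq:doubling}, furnishes an $L^1\to L^\infty$ bound whose $n$-power is precisely calibrated so that Stein's interpolation delivers $c\,n^{\sa_\k/(\sa_\k+1)}$ at $z=z_\ast$.

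With the endpoint in hand, Riesz--Thorin interpolation between $\|\proj_n\|_{L^1\to L^2}$ and $\|\proj_n\|_{L^{p_0}\to L^2}$ yields (i), while interpolation between $\|\proj_n\|_{L^{p_0}\to L^2}$ and $\|\proj_n\|_{L^2\to L^2}$ yields (ii); a short algebraic check confirms that the interpolated exponents are linear in $|1/p-1/2|$ and coincide with $\da_\k(p)$ and $\sa_\k(1/p-1/2)$ respectively. For the sharpness of (i), I would fix $j_0$ with $\k_{j_0}=\k_{\min}$ and take $f_n(y)=P_n(h_\k^2;e_{j_0},y)$: by the reproducing property $\proj_n(h_\k^2;f_n)=f_n$ and $\|f_n\|_{\k,2}^2=P_n(h_\k^2;e_{j_0},e_{j_0})\sim n^{2\sa_\k}$, while a dyadic decomposition of $S^d$ into caps $c(e_{j_0},2^{j}n^{-1})$ coupled with \eqref{eq:S-est} and \eqref{eq:doubling} gives $\|f_n\|_{\k,p}\sim n^{\sa_\k-\da_\k(p)}$, and the ratio realizes the upper bound.

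The principal obstacle will be the construction and sharp calibration of the analytic family at the endpoint $p_0$: pointwise kernel estimates alone cannot reach this case, Sogge's homogeneous-space reductions are unavailable because $h_\k$ is singular along the coordinate great circles, and the family must be engineered so that its kernel falls under the scope of \lemref{lem:VJacobi} while the resulting $L^1\to L^\infty$ bounds remain correctly calibrated against the anisotropic doubling measure \eqref{eq:doubling}.
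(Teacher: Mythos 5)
Your overall strategy — reduce to a symmetric $L^{p}\to L^{p'}$ bound via the $TT^{*}$ identity, settle the $L^1\to L^2$ and $L^2\to L^2$ endpoints, and attack the hard intermediate endpoint $p_0=\frac{2(\s_\k+1)}{\s_\k+2}$ by Stein's analytic interpolation for a Bochner--Riesz type family — is the paper's strategy. But there is a genuine gap in the calibration of the $\Re\,z=0$ line: the \emph{global} $L^1\to L^\infty$ bound for the analytic family at $z=\i\tau$ does \emph{not} hold with the constant $c_\tau n^{\s_\k}$ that the interpolation requires. Concretely, with the family $\mathcal{P}_n^z f = f\ast_\k G_n^z$, $G_n^z(t)=c(1-z)\frac{n+\l_\k}{\l_\k}C_n^{\l_\k}(t)(1-t^2+n^{-2})^{(\s_\k-(\s_\k+1)z)/2}$, the kernel at $(x,y)=(e_1,e_2)$ equals $G_n^{\i\tau}(0)$, whose modulus for even $n$ is comparable to $n^{\l_\k}$, and $\l_\k=\s_\k+\k_{\rm min}>\s_\k$ whenever $\k_{\rm min}>0$. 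With that constant, Stein's theorem delivers only $n^{\l_\k/(\s_\k+1)}$, strictly larger than the target $n^{\s_\k/(\s_\k+1)}$. (Also, the kernel of the family is a fixed Gegenbauer polynomial multiplied by a complex power of $(1-t^2+n^{-2})$, not a Jacobi polynomial with $z$-dependent parameter, so Lemma~\ref{lem:VJacobi} does not govern the $\Re\,z=0$ line.)

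The paper overcomes this by \emph{localizing to small spherical caps}. Theorem~\ref{thm:proj-local} treats $f$ supported in $c(\varpi,\t)$ with $\t\le 1/(8d)$ and simultaneously restricts the output to that cap. Under this localization there is always a coordinate with $|\varpi_{j_0}|\ge 1/\sqrt{d+1}$, hence $|x_{j_0}y_{j_0}|\ge\va_d>0$ for all $x,y$ in the cap, and the $t_{j_0}$-integration inside $V_\k$ supplies the extra decay that brings the $L^1\to L^\infty$ constant of the family down to $c_\tau n^{\s_\k}$ (Lemma~\ref{lem-2-8}). Moreover, the proof must split into two cases according to $\ga_\varpi$ in \eqref{def:gamma}: Stein's interpolation is used only in the extremal case $\ga_\varpi=\s_\k-\tf{d-1}2$, while $\ga_\varpi<\s_\k-\tf{d-1}2$ is handled by a direct dyadic decomposition of the kernel combined with $L^\infty$ and $L^2$ estimates and Riesz--Thorin. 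Theorem~\ref{thm:proj}(i) is then recovered by decomposing $S^d$ into finitely many caps of radius $\le 1/(8d)$. Your plan would need to build in this localization; without it the interpolation is not correctly calibrated. Finally, for the sharpness your test function $f_n=P_n(h_\k^2;\cdot,e_{j_0})$ is the paper's choice, but \eqref{eq:S-est} gives only upper bounds, while both $\|f_n\|_{\k,2}^2=P_n(h_\k^2;e_{j_0},e_{j_0})$ and $\|f_n\|_{\k,q}$ require two-sided asymptotics; the paper obtains these from the explicit generalized Gegenbauer/Jacobi representation \eqref{5-2-v2} and the Jacobi $L^q$ asymptotics \eqref{JacobiLp}, together with the duality Lemma~\ref{duality}.
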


The estimate in (ii) is sharp if $\k =0$ as shown in \cite{So1}. We expect
that it is also sharp for $\k \ne 0$ but could not prove it at this moment.
For further discussion on this point, see Remark \ref{rem5.1} in Section 5.

For the spherical harmonics, the above theorem is enough for the
proof of the boundedness of the Ces\`aro means. (See \cite{BC} and
\cite{So2}.)   For $h$-harmonics, however, a stronger result is
needed since $\da_\k(p)>\da(p):=\max\{d|\f 1p-\f12|-\f12,0\}$.

\begin{thm}\label{thm:proj-cap}
Suppose  that $1\leq p \leq \f{2\sa_\k+2}{\sa_k+2}$ and $f$ is
supported in a spherical cap  $c(\varpi, \t)$ with $\t \in
(n^{-1},\pi]$ and  $\varpi\in S^d$. Then
$$
\left \|\proj_n (h_\k^2; f)\right\|_{\k, 2} \leq c n^{\da_\k(p)}
  \t^{\da_\k(p)+\f12} \left[  \int_{c(\varpi, \t)}
         h_\k^2(x)\, d\og(x)\right]^{\f 12 -\f1p}\|f\|_{\k,p}.
$$
 \end{thm}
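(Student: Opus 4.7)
The plan is to combine a $TT^*$-type duality, pointwise kernel estimates at the endpoint $p=1$, Stein's analytic interpolation at the endpoint $p=p_0:=\tfrac{2\sa_\k+2}{\sa_\k+2}$, and Riesz--Thorin interpolation across the intermediate range. Because $\proj_n(h_\k^2;\cdot)$ is a self-adjoint projection on $L^2(h_\k^2;S^d)$ and $\supp f\subset c(\varpi,\t)$, one has
\[
  \|\proj_n(h_\k^2;f)\|_{\k,2}^2 \;=\; \int_{c(\varpi,\t)} f\cdot\overline{\proj_n(h_\k^2;f)}\,h_\k^2\,d\og,
\]
so by H\"older's inequality the theorem reduces to bounding $\mathbf{1}_{c(\varpi,\t)}\,\proj_n(h_\k^2;\cdot)\,\mathbf{1}_{c(\varpi,\t)}$ from $L^p(h_\k^2)$ to $L^{p'}(h_\k^2)$ by $c\,n^{2\da_\k(p)}\t^{2\da_\k(p)+1}[\int_{c(\varpi,\t)}h_\k^2]^{1-2/p}$.

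For the easy endpoint $p=1$ I would apply the trivial bound $\|\proj_n(h_\k^2;f)\|_{\k,2}^2\le \|f\|_{\k,1}^2\,\sup_{x,y\in c(\varpi,\t)}|P_n(h_\k^2;x,y)|$ and insert Lemma~\ref{thm:S-estimate}. Partitioning $\{1,\ldots,d+1\}=J\sqcup K$ with $J=\{j:|\varpi_j|\ge\t\}$ and $K=\{j:|\varpi_j|<\t\}$ and taking $\|\bar x-\bar y\|\lesssim n^{-1}$ (the worst case, consistent with $\t\ge n^{-1}$) gives
\[
 \sup_{x,y\in c(\varpi,\t)}|P_n(h_\k^2;x,y)|\le c\,n^{d-1+2\sum_{j\in K}\k_j}\prod_{j\in J}|\varpi_j|^{-2\k_j}.
\]
Matching this against the target, computed via Lemma~\ref{lem:doubling} as $c\,n^{2\sa_\k}\t^{2\sa_\k+1-d}\prod_j(|\varpi_j|+\t)^{-2\k_j}$, reduces to the elementary inequality $(n\t)^{2(\sum_{j\in J}\k_j-\k_{\min})}\ge 1$, which is immediate since $J$ is nonempty, $\k_j\ge\k_{\min}$, and $n\t\ge 1$. (The degenerate case $J=\emptyset$ forces $\t\gtrsim 1$ and is absorbed in the global Theorem~\ref{thm:proj}.)

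For the hard endpoint $p=p_0$ I would follow the Sogge--Stein scheme: embed $\proj_n(h_\k^2;\cdot)$ in an analytic family $\{T_z\}_{z\in\mathbb{C}}$ built from Riesz-type multipliers attached to the projections $\proj_k(h_\k^2;\cdot)$ and compressed by $\mathbf{1}_{c(\varpi,\t)}$ on both sides. On a vertical line with small $|\operatorname{Re}z|$ the orthogonality of $h$-harmonics yields an $L^2\to L^2$ bound of order one; on a vertical line $\operatorname{Re}z=\sa_\k+\va$ the explicit kernel formula \eqref{proj-Kernel} combined with Lemma~\ref{lem:VJacobi} and Lemma~\ref{lem:doubling} produces a pointwise ($L^1\to L^\infty$) bound of the correct local order, tracking the location $\varpi$ through the factor $\prod_j(|\varpi_j|+\t+n^{-1})^{-2\k_j}$. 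Stein's analytic interpolation then supplies the $L^{p_0}\to L^{p_0'}$ bound for the cap-compressed operator, and Riesz--Thorin interpolation between this and the $p=1$ bound covers the full range $1\le p\le p_0$.

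The main obstacle is the Stein step. For ordinary spherical harmonics, Sogge's argument reduces everything to a one-dimensional Gegenbauer multiplier problem by invoking the rotational symmetry of $S^d$; here, however, the invariance group of $h_\k^2$ is only $\mathbb{Z}_2^{d+1}$, and the auxiliary kernel along the complex line must be estimated locally on $c(\varpi,\t)$ with explicit dependence on both $\t$ and $\varpi$. Matching this local dependence to the doubling weight $[\int_{c(\varpi,\t)}h_\k^2]^{1-2/p}$ appearing on the right-hand side is precisely the delicate coupling of Stein interpolation with the sharp pointwise local kernel estimates of Lemmas~\ref{lem:VJacobi}--\ref{thm:S-estimate} alluded to in the introduction.
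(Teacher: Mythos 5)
Your overall architecture — reduce by $TT^*$/duality to an $L^p\to L^{p'}$ bound for the cap-compressed operator, prove the $p=1$ endpoint by the pointwise kernel estimate, prove the $p=\nu:=\tfrac{2\sa_\k+2}{\sa_\k+2}$ endpoint, and close the range by Riesz--Thorin — is exactly the skeleton of the paper's proof, and your $p=1$ computation (including the reduction to $(n\t)^{2(\sum_{j\in J}\k_j-\k_{\min})}\ge 1$) matches the paper's. The genuine gap is at the $p=\nu$ endpoint, i.e., the paper's Theorem~\ref{thm:proj-local}, where you propose to run Stein analytic interpolation uniformly over all caps $c(\varpi,\t)$ and track the $\t,\varpi$-dependence through the $L^1\to L^\infty$ kernel bound on the ``hot'' line. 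This does not work for a whole family of caps, and the paper in fact only uses Stein interpolation in a degenerate subcase.

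Here is why. Write $\g=\g_\varpi=\sum_{|\varpi_j|<4\t}\k_j$, so $0\le\g\le\sa_\k-\tf{d-1}2$. When $\g=\sa_\k-\tf{d-1}2$ (paper's Case II), Lemma~\ref{lem:doubling} gives $\int_{c(\varpi,\t)}h_\k^2\sim\t^{2\sa_\k+1}$, the $\t$- and $\varpi$-dependence in the target collapses to the single factor $n^{\sa_\k/(\sa_\k+1)}$, and Stein interpolation applies cleanly: the $L^1\to L^\infty$ bound $\sup_{x,y\in c(\varpi,\t)}|V_\k[G_n^{\i\tau}(\la x,\cdot\ra)](y)|\le c_\tau n^{\sa_\k}$ of Lemma~\ref{lem-2-8} rests crucially on the fact that in this subcase either a coordinate $|\varpi_1|\ge 1/\sqrt{d+1}$ is bounded away from zero (so $|x_1 y_1|\gtrsim 1$ on the cap), or else the relevant $\k_j$ vanish. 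When $\g<\sa_\k-\tf{d-1}2$ (paper's Case I), neither of these structural facts holds, the target carries a nontrivial product $\prod_j(|\varpi_j|+\t)^{-2\k_j}$ through $[\int_{c(\varpi,\t)}h_\k^2]^{1-2/\nu}$, and the kernel bound you would need on the hot line of your analytic family is no longer supplied by the Sogge-type one-dimensional argument. You flag this as the ``delicate coupling,'' but flagging the difficulty is not the same as resolving it; as sketched, the $L^1\to L^\infty$ estimate for the analytic continuation on a general cap is simply missing, and the exponents do not match by a naive substitution of the $p=1$ kernel sup.

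The paper handles Case I not by Stein interpolation at all, but by a dyadic (Littlewood--Paley type) decomposition of the Gegenbauer multiplier: it writes $\proj_n=\sum_{j}Y_{n,j}$ with $Y_{n,j}$ localizing to $1-\la x,y\ra^2\sim 4^j n^{-2}$, proves for each piece an $L^\infty\to L^\infty$ bound (Lemma~\ref{lem-2-4}, from Lemma~\ref{lem-2-2}) and an $L^2\to L^2$ multiplier bound (Lemma~\ref{lem-2-5}), interpolates each piece from these by Riesz--Thorin, and sums the resulting geometric series, which converges precisely because $\g<\sa_\k-\tf{d-1}2$ makes the exponent $1-(\tf{d+1}2+\g)/(\sa_\k+1)$ strictly positive. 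To make your proposal rigorous you would either need to reproduce this decomposition-and-summation argument for Case I, or else supply a genuinely new $\t,\varpi$-tracked $L^1\to L^\infty$ bound for the analytic family on general caps, which the paper did not find and which your sketch does not provide.
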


The above theorems on the projection operators will be proved in
Section 5 and the theorems on Ces\'aro means will be  proved in
Section 6.


\subsection{Orthogonal expansions on the ball and on the simplex}

Let $\Omega^d$ stand for either $B^d$ or $T^d$ and $W_\k^\Omega$
stand for either $W_\k^B$ or $W_\k^T$, respectively. We denote the
$L^p$ norm of $L^p(W_\k^\Omega;\Omega^d)$ by
$\|\cdot \|_{W_\k^\Omega,p}$,
$$
   \|f\|_{W_\k^\Omega,p} : = \left( a_\k^\Omega
           \int_{\Omega^d} |f(y)|^p W_\k^\Omega(y) dy \right)^{1/p}
$$
for $1 \le p  < \infty$ and with the usual understanding that it becomes
the uniform norm on $\Omega^d$ when $p=\infty$.

Our main results on the Ces\`aro summation of orthogonal
expansions on $B^d$ and $T^d$ are the following two theorems:

\begin{thm}\label{thm:CesaroBT}
Suppose that  $f\in L^p(W_\k^\Omega;\Omega^d)$, $1\leq p\leq
\infty$,  $|\f1p-\f12|\ge \f 1{2\sa_\k+2}$ and
$$
\da>\da_\k(p):=\max\{(2\sa_\k+1)|\tf1p-\tf12|-\tf12,0\}.
$$
Then $S_n^\d (W_\k^\Omega;f)$ converges to $f$ in
$L^p(W_\k^\Omega;\Omega^d)$ and
$$
 \sup_{n\in\mathbb{N}}\|S_n^\da(W_\k^\Omega;f)\|_{W_\k^\Omega,p}
        \leq c \|f\|_{W_\k^\Omega,p}.
$$
\end{thm}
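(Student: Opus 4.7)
The plan is to deduce the theorem from Theorem \ref{thm:Cesaro} in the ball case by direct transference through \eqref{B-S}, while for the simplex I would redo the analysis on $T^d$ directly, paralleling Sections 4--6.

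Ball case. Given $f\in L^p(W_\k^B;B^d)$, lift it to the sphere by $F(y_1,\ldots,y_{d+1}):=f(y_1,\ldots,y_d)$, which is even in $y_{d+1}$. Combining $h_\k^2(x,\sqrt{1-\|x\|^2})/\sqrt{1-\|x\|^2}=W_\k^B(x)$ with \eqref{BSintegral} gives $\|F\|_{\k,p}\sim\|f\|_{W_\k^B,p}$. Since $h_\k^2$ is invariant under $y_{d+1}\mapsto -y_{d+1}$, each $\CH_n^d(h_\k^2)$ splits orthogonally into its even and odd parts in $y_{d+1}$, so every $\proj_n(h_\k^2;\cdot)$ preserves the even subspace. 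Consequently \eqref{projBS} extends to Ces\`aro means, $S_n^\d(W_\k^B;f,x)=S_n^\d(h_\k^2;F,\phi(x))$, and Theorem \ref{thm:Cesaro} applied to $F$ gives the uniform bound. Convergence $S_n^\d(W_\k^B;f)\to f$ in $L^p(W_\k^B;B^d)$ follows from convergence on $S^d$ plus density of polynomials.

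Simplex case. The composition $\phi\circ\psi:T^d\to S^d$ would transfer the problem to $h$-harmonics on $S^d$ with parameter $\tilde\k=(2\k_1,\ldots,2\k_d,\k_{d+1})$, but $\sigma_{\tilde\k}\neq\sigma_\k$ in general, so this direct route would produce a strictly weaker theorem than claimed. Instead I would redo the arguments of Sections 4--6 on $T^d$, using the explicit kernel formula \eqref{proj-kernelT}, the pointwise estimate \eqref{eq:T-est} as substitute for Lemma \ref{thm:S-estimate}, an analogue of Lemma \ref{lem:doubling} for $W_\k^T$, and the Jacobi estimates \eqref{Est-Jacobi}--\eqref{JacobiLp}. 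These inputs should yield simplex analogues of Theorems \ref{thm:proj} and \ref{thm:proj-cap}, from which the Ces\`aro boundedness on $L^p(W_\k^T;T^d)$ and the corresponding convergence statement follow by the argument of Section 6.

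The main obstacle will be the local projection estimate on $T^d$ corresponding to Theorem \ref{thm:proj-cap}. Caps in $T^d$ pull back under $\psi$ to $\ZZ_2^d$-invariant regions on $B^d$ that are not themselves caps, so the argument cannot simply be transported from the ball. The substitute is direct pointwise analysis of $P_n(W_\k^T;x,y)$ via \eqref{eq:T-est}, partitioning the cap in $T^d$ according to the relative sizes of the coordinates of the centre $\varpi$ and the radius $\t$, in the spirit of Section 4; once this cap estimate is obtained, the rest of the proof mirrors Sections 5 and 6 with $T^d$ and $W_\k^T$ in place of $S^d$ and $h_\k^2$.
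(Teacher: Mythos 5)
Your ball case is correct and matches the paper: the relation $S_n^\d(W_\k^B;f,x)=S_n^\d(h_\k^2;F,\phi(x))$ plus the measure identity \eqref{BSintegral} transfers Theorem~\ref{thm:Cesaro} verbatim.

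For the simplex your diagnosis of why direct transference fails is wrong, even though your proposed remedy would work. Under $\phi\circ\psi$ the weight $W_\k^T(y)\,dy$ pulls back to $W_\k^B(x)\,dx$ (by \eqref{T-B}), which in turn corresponds to $h_\k^2\,d\o$ with the \emph{same} parameter $\k$; no rescaled $\tilde\k=(2\k_1,\ldots,2\k_d,\k_{d+1})$ appears, so $\s_\k$ is unchanged. The actual obstruction, stated right after \eqref{projTB}, is degree doubling: $\proj_n(W_\k^T;f)\circ\psi$ is built from $\proj_{2n}(W_\k^B;f\circ\psi)$, so $S_n^\d(W_\k^T)$ averages only over even-degree ball projections and is not any Ces\`aro mean of $S_\cdot^\d(W_\k^B)$. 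This breaks the Ces\`aro structure but not the single-projection estimates, which is why the paper transfers Theorem~\ref{thm:projBT} and, crucially, the cap estimate Theorem~\ref{thm:proj-capT} directly from the sphere rather than reproving them on $T^d$ as you propose.

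Your secondary worry --- that caps in $T^d$ do not pull back to caps on $B^d$ --- is also resolved by the transference. The distance $d_T$ is defined precisely so that $d_T(\psi(x),\psi(y))=d(X,Y)$ with $X=(x,\sqrt{1-\|x\|^2})$, and one has $\int_{c_T(x,\t)}W_\k^T\,dy=\int_{c(X,\t)}h_\k^2\,d\o$ with $X=(\sqrt{x_1},\ldots,\sqrt{1-|x|})$; $c_T(x,\t)$ pulls back to the intersection of the spherical cap $c(X,\t)$ with the positive octant, and the $\ZZ_2^{d+1}$-invariance of both $h_\k^2$ and $f\circ\psi$ makes this sufficient. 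So the hard Section~4 analysis on $T^d$ is unnecessary. What genuinely must be redone, and what the paper does, is the Section~6 argument on $T^d$: re-expand $D_{n,v}^\d(W_\k^T;\cdot)$ in Jacobi polynomials $P_j^{(\l_\k+\ell-1/2,-1/2)}$, estimate the resulting kernel $K_{n,v}^\d(W_\k^T;x,y)$ via Lemma~\ref{lem:VJacobi} together with the identity relating $P_n^{(\a,-1/2)}(2t^2-1)$ to $P_{2n}^{(\a,\a)}$, obtain the analogue of Corollary~\ref{cor-5-2}, and then run the cap decomposition using $c_T(\cdot,\cdot)$ and Theorem~\ref{thm:proj-capT}. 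Your proposal would eventually reach the same conclusion, but at the cost of redoing the most delicate part (the local projection estimate) rather than inheriting it.
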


\begin{thm} \label{thm:CesaroBT2}
Assume $1\leq p\leq \infty$ and  $0<\d\leq \da_\k (p) $. Then
there exists a function $f \in L^p(W_\k^\Omega; \Omega^d)$ such
that $S_n^\d (W_\k^\Omega;f)$ diverges  in
$L^p(W_\k^\Omega;\Omega^d)$.
\end{thm}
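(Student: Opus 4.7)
The plan is to reduce Theorem~\ref{thm:CesaroBT2} to the sphere analog Theorem~\ref{thm:Cesaro2} via the transformations $\phi: B^d \to S^d_+$ and $\psi: T^d \to B^d$ from Section~2. By the uniform boundedness principle it is enough, in each case, to show that $\sup_n \|S_n^\da(W_\k^\Omega)\|_{p \to p} = \infty$ for $0 < \da \le \da_\k(p)$.

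For the ball, I would combine the projection identity \eqref{projBS} with the linearity of Ces\`aro means in the projections to obtain
$$
 S_n^\d(W_\k^B; f, x) = S_n^\d(h_\k^2; F, \phi(x)), \qquad F(y):=f(y_1,\ldots,y_d),
$$
while \eqref{BSintegral} applied to the (trivially $y_{d+1}$-independent, hence even) function $F$ gives the norm equivalence $\|F\|_{\k,p}\sim \|f\|_{W_\k^B,p}$. The closed subspace of $y_{d+1}$-independent functions in $L^p(h_\k^2;S^d)$ is preserved by $\proj_n(h_\k^2)$, and hence by $S_n^\d(h_\k^2)$, because $h_\k^2$ is invariant under $y_{d+1}\mapsto -y_{d+1}$. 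Thus the $L^p\to L^p$ norm of $S_n^\d(W_\k^B)$ is comparable to that of $S_n^\d(h_\k^2)$ restricted to this subspace, and it suffices to arrange the counterexample of Theorem~\ref{thm:Cesaro2} to lie there. I would inspect that proof: the standard extremizer is a function concentrated in a small cap around a point where the weight has its sharpest singularity (an intersection of coordinate hyperplanes with $S^d$). This base point can be taken on the equator $\{y_{d+1}=0\}$, and symmetrizing the test function in $y_{d+1}$ preserves the rate of divergence, since the Ces\`aro operator commutes with this reflection and symmetrization is a contraction on $L^p(h_\k^2;S^d)$.

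For the simplex, $\psi$ doubles polynomial degree via \eqref{projTB}, so Ces\`aro means do not transfer term-by-term, and I would instead argue directly on $T^d$, using the pointwise kernel bound of Lemma~\ref{thm:T-estimate}, which has the same structure as Lemma~\ref{thm:S-estimate} with $(\xi,\zeta)$ in place of $(\bar x,\bar y)$. Combined with \eqref{proj-kernelT} and the Jacobi estimate \eqref{Est-Jacobi}, this yields pointwise and $L^p$ control of $K_n^\d(W_\k^T; x,y)$ in full analogy to the sphere, and the necessity proof of Theorem~\ref{thm:Cesaro2} transposes with the divergent test function now concentrated near a vertex of $T^d$. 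The main obstacle in both cases is verifying that the extremizer in the proof of Theorem~\ref{thm:Cesaro2} can be chosen with the required symmetry (independence of $y_{d+1}$ for the ball) or localization (at a vertex for the simplex); this ultimately amounts to a local sharp lower bound for $\|S_n^\d\|_{p\to p}$ restricted to caps around such base points, which should follow from the same pointwise kernel estimates underpinning the necessity proof on the sphere.
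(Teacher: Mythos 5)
Your high-level strategy (uniform boundedness, transfer to the sphere, reduce to a one-dimensional lower bound) is the right one and matches the paper in spirit, but the execution has concrete gaps in both the ball and the simplex cases.

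For the ball, the reduction to the $y_{d+1}$-even subspace of $L^p(h_\k^2;S^d)$ via \eqref{projBS} is correct. However, the claim that ``this base point can be taken on the equator $\{y_{d+1}=0\}$'' is false in general. The necessity argument (Theorem~\ref{thm:Cesaro2}) must evaluate the kernel at $e_j$ with $\k_j=\k_{\rm min}$ in order for Proposition~\ref{prop:6.4} to be applied with the correct exponent $\s_\k=\l_\k-\k_{\rm min}$; if $\k_{d+1}$ is the strict minimum, the base point is the pole $e_{d+1}$, not an equatorial vertex. But $K_n^\d(h_\k^2;\cdot,e_{d+1})$ is \emph{not} even in $x_{d+1}$ (the summands $P_j(h_\k^2;\cdot,e_{d+1})$ have parity $j$), so the restriction-to-even-subspace argument does not apply to it. The paper sidesteps this by evaluating the ball kernel directly at the center, $K_n^\d(W_\k^B;\cdot,0)=K_n^\d(w_{\k_{d+1},\l_\k-\k_{d+1}};\|\cdot\|,0)$, which is the other branch of Proposition~\ref{prop:6.4} (the lower bound at $s=0$). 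Also, your symmetrization reasoning runs in the wrong direction: ``symmetrization is a contraction'' bounds $\|\tilde f\|_p$ from above, but that alone does not preserve the divergence rate of $\|S_n^\d\tilde f\|_p/\|\tilde f\|_p$. The correct point (valid when the base point is equatorial) is that the functional $f\mapsto S_n^\d(h_\k^2;f,e_j)$ has an even kernel, hence depends only on the even part of $f$, so its norm on $L^p_{\rm even}$ equals its full norm.

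For the simplex, the proposal is too vague to close the argument. Lemma~\ref{thm:T-estimate} is an \emph{upper} bound on the kernel and cannot by itself yield the lower bound on $\|K_n^\d(W_\k^T;\cdot,y)\|_{W_\k^T,q}$ that necessity requires. The missing ingredient is the explicit closed form $K_n^\d(W_\k^T;x,e_j)=K_n^\d\bigl(w^{(\l_\k-\k_j-\f12,\k_j-\f12)};1,2x_j-1\bigr)$ (and similarly at $e_{d+1}=0$), which reduces the $L^q$ norm of the simplex kernel to that of a one-dimensional Jacobi kernel and hence to known results for Jacobi expansions (Chanillo--Muckenhoupt). What the pointwise estimate of Lemma~\ref{thm:T-estimate} \emph{does} buy is the Nikolskii-type inequality on $T^d$ (the analogue of Lemma~\ref{lem:QQ}), which is a necessary but insufficient piece. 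You correctly observed that $\psi$ doubles the degree so one cannot transfer Ces\`aro means term-by-term, but the alternative you sketch does not actually supply the lower bound on the operator norm.
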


For $d =1$ and $\Omega = T^1=[0,1]$, these theorems become results for
the Jacobi polynomial expansions (\cite{Ch-Mu}). For $d=1$ and
$\Omega = B^1=[-1,1]$, these theorems become results for the generalized
Gegenbauer polynomial expansions with respect to $w_{\k_1,\k_2}$
in \eqref{GGweight}, which appear to be new if $\k_1 \ne 0$ while
the case $\k =0$ corresponds to Gegenbauer polynomial expansions
\cite{AH,BC}.  We state the result as follows:

\begin{cor}\label{thm:CesaroGG}
Suppose that $1\leq p\leq \infty$,  $|\f1p-\f12|\ge \f
1{2\max\{\l,\mu\}+2}$ and
$$
\da>\da(p):=\max\{(2\max\{\l,\mu\}+1)|\tf1p-\tf12|-\tf12,0\}.
$$
Then $S_n^\d (w_{\l,\mu};f)$ converges to $f$ in
$L^p(w_{\l,\mu};[-1,1])$ and
$$
 \sup_{n\in\mathbb{N}}\|S_n^\da(w_{\l,\mu};f)\|_{w_{\l,\mu},p}
        \leq c \|f\|_{w_{\l,\mu},p}.
$$
Furthermore, the condition $\da > \da(p)$ is sharp.
\end{cor}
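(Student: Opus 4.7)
The plan is to derive Corollary \ref{thm:CesaroGG} as the one-dimensional specialization of Theorems \ref{thm:CesaroBT} and \ref{thm:CesaroBT2} to $\Omega = B^1 = [-1,1]$. The first step is to set up a parameter dictionary. For $d=1$, the ball weight $W_\k^B$ of \eqref{weightB}, read consistently with the identity $h_\k^2(x,\sqrt{1-x^2}) = W_\k^B(x)/\sqrt{1-x^2}$, takes the form $|x|^{2\k_1}(1-x^2)^{\k_2-1/2}$. Under the identification $(\k_1,\k_2) = (\mu,\l)$ this is precisely the generalized Gegenbauer weight $w_{\l,\mu}$ of \eqref{GGweight}, whose orthonormal system consists of the polynomials $C_n^{(\mu,\l)}$ via \eqref{G-Gegen}. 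Consequently the orthogonal expansions, their $(C,\delta)$ means, and the weighted $L^p$ norms on both sides of the correspondence are the same objects.

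The second step is to verify that the critical parameter reduces correctly. From \eqref{critical-index} with $d=1$,
\[
  \sigma_\k = \tfrac{d-1}{2} + |\k| - \k_{\min} = (\mu+\l) - \min\{\mu,\l\} = \max\{\l,\mu\}.
\]
Hence the admissibility region $|\tfrac12-\tfrac1p| \ge \tfrac1{2\sigma_\k+2}$ and the critical index $\delta_\k(p) = \max\{(2\sigma_\k+1)|\tfrac1p-\tfrac12|-\tfrac12,\,0\}$ appearing in Theorems \ref{thm:CesaroBT}--\ref{thm:CesaroBT2} collapse to the exact hypotheses stated in the corollary.

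With this dictionary in place, the positive assertion of the corollary, namely the uniform bound
\[
  \sup_n \|S_n^\delta(w_{\l,\mu};f)\|_{w_{\l,\mu},p} \le c\|f\|_{w_{\l,\mu},p}
\]
and the $L^p$-convergence $S_n^\delta(w_{\l,\mu};f) \to f$, is an immediate consequence of Theorem \ref{thm:CesaroBT} applied with $d=1$ and $\Omega = B^1$. The sharpness of the condition $\delta > \delta(p)$ follows in the same way from Theorem \ref{thm:CesaroBT2}: for every $0 < \delta \le \delta(p)$ that theorem produces a function $f \in L^p(w_{\l,\mu};[-1,1])$ whose Ces\`aro means diverge. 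Since the argument is pure specialization, no substantive obstacle arises; the only item requiring care is the parameter identification that makes $\sigma_\k$ collapse to $\max\{\l,\mu\}$ under the $d=1$ reduction, after which the two general theorems on $B^d$ immediately deliver both halves of the corollary.
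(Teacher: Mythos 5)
Your proposal is correct and coincides with the paper's own (implicit) proof: the authors present Corollary~\ref{thm:CesaroGG} as the $d=1$, $\Omega=B^1$ specialization of Theorems~\ref{thm:CesaroBT} and~\ref{thm:CesaroBT2}, exactly as you do. Your parameter dictionary $(\k_1,\k_2)=(\mu,\lambda)$ (correctly reading $W_\k^B$ with the exponent $2\k_i$ so as to be consistent with $h_\k^2(x,\sqrt{1-x^2})=W_\k^B(x)/\sqrt{1-x^2}$ and with \eqref{GGweight}), together with the computation $\sigma_\k=\max\{\lambda,\mu\}$ for $d=1$, is the only content of the reduction, and you carry it out correctly.
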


The result analogous to Theorem \ref{thm:proj} also holds for the
projection operator.

\begin{thm} \label{thm:projBT}
Let $d \ge 2$ and $n \in \mathbb{N}$. Then
\begin{enumerate}
\item[(i)] for $1 \leq  p \le \frac{2(\sa_k+1)}{\sa_k+2}$,
$$
 \left \|\proj_n(W_\k^\Omega; f)\right \|_{W_\k^\Omega,2} \le
    c n^{\da_\k(p)}
    \|f\|_{W_\k^\Omega,p};
$$
\item[(ii)] for $\frac{2(\sa_k+1)}{\sa_k+2} \le p \le 2$,
$$
  \left  \|\proj_n(W_\k^\Omega; f) \right \|_{W_\k^\Omega,2} \le
         c n^{\sa_k (\frac1p-\frac12)} \|f\|_{W_\k^\Omega,p}.
$$
\end{enumerate}
Furthermore, the estimate in (i) is sharp.
\end{thm}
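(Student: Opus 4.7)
The strategy is to derive Theorem \ref{thm:projBT} from Theorem \ref{thm:proj} by transferring the sphere estimates first to the ball via the map $\phi$ of \eqref{B-S}, and then from the ball to the simplex via the map $\psi$ of \eqref{psi}. The transfer rests on the projection identities \eqref{projBS} and \eqref{projTB} together with the change-of-variable formulas \eqref{BSintegral} and \eqref{T-B}.

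For the ball, given $f\in L^p(W_\k^B; B^d)$, set $F(y_1,\ldots,y_{d+1}):= f(y_1,\ldots,y_d)$, a function on $S^d$ that is trivially even in $y_{d+1}$. The identity \eqref{BSintegral} gives $\|F\|_{\k,p}\sim \|f\|_{W_\k^B,p}$. Since both $F$ and $h_\k^2$ are invariant under $y_{d+1}\mapsto -y_{d+1}$ and this reflection preserves $\CH_n^d(h_\k^2)$, the projection $\proj_n(h_\k^2;F)$ is also even in $y_{d+1}$; combining this symmetry with \eqref{projBS} and another application of \eqref{BSintegral} yields $\|\proj_n(h_\k^2;F)\|_{\k,2}\sim \|\proj_n(W_\k^B;f)\|_{W_\k^B,2}$. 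The two bounds for the ball then follow at once from the corresponding parts of Theorem \ref{thm:proj} applied to $F$.

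For the simplex, given $f\in L^p(W_\k^T;T^d)$, set $g(x):= f(x_1^2,\ldots,x_d^2)$ for $x\in B^d$; this function is $\ZZ_2^d$-invariant, as is $W_\k^B$, so $\proj_{2n}(W_\k^B;g)$ is $\ZZ_2^d$-invariant as well. Hence every summand in \eqref{projTB} equals $\proj_{2n}(W_\k^B;g,x)$, and the identity collapses to $(\proj_n(W_\k^T;f)\circ \psi)(x) = \proj_{2n}(W_\k^B;g,x)$. The relation \eqref{T-B} gives $\|g\|_{W_\k^B,p}\sim \|f\|_{W_\k^T,p}$ and $\|\proj_{2n}(W_\k^B;g)\|_{W_\k^B,2}\sim \|\proj_n(W_\k^T;f)\|_{W_\k^T,2}$, so the ball case, applied with $n$ replaced by $2n$, delivers the bound on $T^d$ (the factor $2^{\da_\k(p)}$ or $2^{\sa_\k(1/p-1/2)}$ is absorbed into the constant).

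For the sharpness of (i), one lifts the extremal family from Theorem \ref{thm:proj}(i) on $S^d$. Choosing the family to be concentrated in a small spherical cap centered at a pole $e_j$ with $1\le j\le d$ makes it automatically even in $y_{d+1}$, and averaging over the $\ZZ_2^d$-action on the first $d$ coordinates produces a $\ZZ_2^d$-invariant extremal family that descends through $\phi$, and subsequently through $\psi$, to the desired extremal test functions on $B^d$ and $T^d$. The main technical obstacle is to verify that symmetrization preserves the lower bound---that is, to exhibit a concrete $\ZZ_2^d$-invariant $L^p$-normalized function whose projection has $L^2$-norm of exact order $n^{\da_\k(p)}$---and this should follow from the sharp pointwise bound in Lemma \ref{thm:S-estimate} applied to a function concentrated on a cap of radius $n^{-1}$ around $e_j$.
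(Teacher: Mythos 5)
Your transfer argument from $S^d$ to $B^d$ to $T^d$ is essentially the route the paper follows in Section 7: lift $f$ on $B^d$ to the even function $F(y,y_{d+1}):=f(y)$, use \eqref{projBS} with \eqref{BSintegral} to identify norms, and apply Theorem~\ref{thm:proj}; then pass to $T^d$ through \eqref{projTB} and \eqref{T-B}, obtaining the bound at degree $n$ on $T^d$ from the one at degree $2n$ on $B^d$. Your observation that $g=f\circ\psi$ and $W_\k^B$ are $\ZZ_2^d$-invariant so that every summand in \eqref{projTB} equals $\proj_{2n}(W_\k^B;g,x)$ is a cleaner variant of the paper's estimate (the paper simply applies the triangle inequality to get $\le \|\proj_{2n}(W_\k^B;f\circ\psi)\|$); the conclusion is the same, and your version makes the scalar identity transparent. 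So the core of the proposal is correct and agrees with the paper.

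The sharpness paragraph is where you deviate and where there are gaps. First, a function supported in a small cap around a pole $e_j$ with $1\le j\le d$ is \emph{not} automatically even in $y_{d+1}$; a bump function near $e_j$ has no reason to satisfy $g(y,-y_{d+1})=g(y,y_{d+1})$, so your claim needs correction before the averaging step makes sense. Second, Lemma~\ref{thm:S-estimate} gives an upper bound on the kernel, which cannot by itself produce the lower bound on $\|\proj_n(h_\k^2;g)\|_{\k,2}$ that sharpness requires; one would need a matching lower bound or a cancellation-free test function, which is exactly why the paper's proof of sharpness in Theorem~\ref{thm:proj}(i) does not work with a bump function at all. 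Instead it uses the reproducing kernel $f_n(x)=P_n(h_\k^2;x,e_j)$ with $\k_j=\k_{\min}$, reduced via \eqref{5-2-v2} to a single Jacobi polynomial in $x_j$, and then computes the exact $L^q$ size via \eqref{JacobiLp}. That function depends only on $x_j$, hence is trivially even in $x_{d+1}$ when $j\le d$, and for even degree it is even in $x_j$ as well, so it descends through $\phi$ and $\psi$ without any symmetrization or loss; moreover it covers the case $\k_{\min}=\k_{d+1}$ (corresponding to the radial weight factor on $B^d$), which your restriction to $1\le j\le d$ excludes. Replacing your sharpness sketch with this explicit extremal family would close the gap.
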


An analogue of Theorem \ref{thm:proj-cap} also holds. We state only the
one for $W_\k^T$ for which we define an distance on $T^d$,
$$
     d_T(x,y): = \arccos ( \sqrt{x_1 y_1} +  \ldots + \sqrt{x_d y_d} +
          \sqrt{1-|x|}\sqrt{1-|y|} ),
$$
where $|z|=|z_1|+\cdots +|z_{d}|$ for $z\in\mathbb{R}^d$.  The
analogue of the spherical cap on $T^d$ is defined as
$c_T(x,\theta) := \{y \in T^d:d_T(x,y)\le \theta\}$.

\begin{thm}\label{thm:proj-capT}
Suppose $1\leq p \leq \f{2\sa_\k+2}{\sa_k+2}$ and  $f$ is
supported in the set $c_T(x, \t)$ with $\t \in (n^{-1},\pi]$ and
$x\in T^d$. Then
$$
  \left \|\proj_n (W_\k^T; f) \right\|_{W_\k^T, 2} \leq c n^{\da_\k(p)}
       \t^{\da_\k(p)+\f12} \[  \int_{{c_{} }_T(x, \t)}
         W_\k^T(y)\, dy \]^{\f 12 -\f1p}\|f\|_{W_\k^T,p}.
$$
\end{thm}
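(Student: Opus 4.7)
The strategy is to lift the problem from $T^d$ to $S^d$ and reduce to \thmref{thm:proj-cap}. For $f$ on $T^d$ supported in $c_T(x_0,\t)$, define
$$F(y) := f(y_1^2,\ldots,y_d^2), \qquad y \in S^d,$$
which is automatically $\ZZ_2^{d+1}$-invariant. The map $\xi(x) := (\sqrt{x_1},\ldots,\sqrt{x_d},\sqrt{1-|x|})$ is a distance-preserving bijection of $(T^d,d_T)$ onto $(S^d \cap \RR^{d+1}_+, d)$, since $\xi(x)\cdot\xi(x') = \cos d_T(x,x')$ by the very definition of $d_T$. Writing $\sigma_\ve: y\mapsto \ve y$ and $\xi_0 := \xi(x_0)$, the condition $(y_1^2,\ldots,y_d^2) \in c_T(x_0,\t)$ rewrites as $y\cdot(\ve\xi_0) \ge \cos\t$ with $\ve = \sign(y)$, so
$$\supp(F) \subset \bigcup_{\ve\in\ZZ_2^{d+1}} \sigma_\ve\, c(\xi_0,\t).$$

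Two equivalences follow from the change of variables $y_i = u_i^2$ on $B^d \cap \RR^d_+$ composed with $\xi = (u,\sqrt{1-\|u\|^2})$: a direct computation gives $W_\k^T(y)\,dy = 2^d h_\k^2(\xi)\,d\omega(\xi)$ on $S^d \cap \RR^{d+1}_+$, and combined with the $\ZZ_2^{d+1}$-invariance of $F$ and $h_\k^2$ this yields
$$\|f\|_{W_\k^T,p} \sim \|F\|_{\k,p}, \qquad \int_{c_T(x_0,\t)} W_\k^T(y)\,dy \sim \int_{c(\xi_0,\t)} h_\k^2(y)\,d\omega(y),$$
the second equivalence following from \lemref{lem:doubling} and its simplex analogue, both sides being comparable to $\t^d \prod_{j=1}^{d+1}((\xi_0)_j+\t)^{2\k_j}$. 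Since $h_\k^2$ is $\ZZ_2^{d+1}$-invariant, the operator $\proj_{2n}(h_\k^2;\cdot)$ is equivariant, so $\proj_{2n}(h_\k^2;F)$ is $\ZZ_2^{d+1}$-invariant. Substituting \eqref{projBS} into \eqref{projTB} and using this invariance to collapse the sum over $\ZZ_2^d$ gives the key identity
$$\proj_n(W_\k^T;f)(\psi(u)) = \proj_{2n}(h_\k^2;F)(u,\sqrt{1-\|u\|^2}),$$
and hence $\|\proj_n(W_\k^T;f)\|_{W_\k^T,2} \sim \|\proj_{2n}(h_\k^2;F)\|_{\k,2}$.

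To bound the last quantity, decompose $F = \sum_{\ve\in\ZZ_2^{d+1}} F_\ve$ where $F_\ve$ is the restriction of $F$ to the $\ve$-orthant of $S^d$; then each $F_\ve$ is supported in the single spherical cap $\sigma_\ve c(\xi_0,\t)$, and $\ZZ_2^{d+1}$-invariance gives $\|F_\ve\|_{\k,p} = 2^{-(d+1)/p}\|F\|_{\k,p}$. Apply \thmref{thm:proj-cap} (with $n$ replaced by $2n$) to each $F_\ve$, use the $\ZZ_2^{d+1}$-invariance of $h_\k^2\,d\omega$ to identify all $2^{d+1}$ cap integrals with $\int_{c(\xi_0,\t)} h_\k^2\,d\omega$, and sum via the triangle inequality, with the resulting $2^{d+1}$ absorbed into the constant. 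Chaining with the norm equivalences yields the claimed bound. The deep analytic work is already packaged inside \thmref{thm:proj-cap}; the only subtle point here is the bookkeeping of the symmetrization, particularly the projection identity above and the measure equivalence when $\xi_0$ lies near (or on) a coordinate hyperplane, where $c(\xi_0,\t)$ may extend across the reflection walls.
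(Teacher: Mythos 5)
Your proposal is correct and takes essentially the same route as the paper: lift $f$ to the $\ZZ_2^{d+1}$-invariant function $F = f\circ\psi$ on $S^d$ via the composite change of variables, observe that the distance $d_T$ corresponds to the geodesic distance on $S^d$ and that the cap masses match, then invoke Theorem~\ref{thm:proj-cap} at degree $2n$. Your exposition fills in two details the paper leaves implicit — the collapse of the $\ZZ_2^d$-sum in \eqref{projTB} to a single term (the paper just uses the triangle inequality there) and the orthant decomposition of $F$ into pieces each supported in a single cap $c(\ve\xi_0,\t)$ before applying Theorem~\ref{thm:proj-cap} — but these are exactly the bookkeeping steps the paper's "follows from" covers, not a different method.
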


The analogue result for $B^d$ holds with $c_T(x,\theta)$ replaced by
$c_B(x,\theta)$ defined in terms of $d_B (x,y) : = \arccos (\la x, y\ra +
  \sqrt{1-\|x\|^2}\sqrt{1-\|y\|^2} )$.

 These results will be proved in Section 7.


\section{Local estimate of projection operator}
\setcounter{equation}{0}

The main effort in the proof of Thereom \ref{thm:proj-cap}, giving in
the next section, lies in proving the following local estimate of the
projection operator.

\begin{thm}\label{thm:proj-local} Let $\nu: =\f{2\sa_\k +2}{\sa_\k+2}$ and
$\nu':=\f{\nu}{\nu-1}$. Let $f$  be a function supported in a
spherical cap $c(\varpi, \t)$ with $\t \in(n^{-1}, 1/(8d)]$ and
$\varpi\in S^d$. Then
\begin{align*}
 \left \|\proj_n(h_\k^2;f)\chi_{c(\varpi,\t)} \right \|_{\k,\nu'}
     \leq c n^{\f{\sa_\k}{1+\sa_\k}}  \t^{\f{2\sa_\k+1}{\sa_\k+1}}
       \[  \int_{c(\varpi,\t)} h_\k^2(x) d\o(x)\]^{1 -\f2\nu} \|f\|_{\k,\nu}.
\end{align*}
\end{thm}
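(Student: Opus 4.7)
The plan is to reduce the desired inequality to a one-sided $L^\nu\to L^2$ bound by a $TT^*$ argument and then to establish that bound by Stein complex interpolation in the spirit of Sogge. Write $E:=c(\varpi,\t)$ and $V_E:=\int_E h_\k^2\,d\o$. Since $\proj_n(h_\k^2;\cdot)$ is a self-adjoint orthogonal projection on $L^2(h_\k^2;S^d)$, for $f,g$ supported in $E$ one has
$$|\la \proj_n f,g\ra_\k|=|\la \proj_n f,\proj_n g\ra_\k|\le \|\proj_n f\|_{\k,2}\,\|\proj_n g\|_{\k,2}.$$
By duality the claimed $L^\nu\to L^{\nu'}$ estimate thus reduces to proving the one-sided bound
$$\|\proj_n(h_\k^2;f)\|_{\k,2}\le c\,n^{\f{\sa_\k}{2(1+\sa_\k)}}\,\t^{\f{2\sa_\k+1}{2(\sa_\k+1)}}\,V_E^{-\f{1}{2(\sa_\k+1)}}\|f\|_{\k,\nu}$$
for every $f$ supported in $E$ (using $1-\tf{2}{\nu}=-\tf{1}{\sa_\k+1}$); squaring this estimate reproduces exactly the prefactor $n^{\sa_\k/(1+\sa_\k)}\t^{(2\sa_\k+1)/(\sa_\k+1)}V_E^{1-2/\nu}$ appearing in the statement.

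To prove this one-sided bound I would apply Stein's analytic interpolation theorem to a family of operators $T^z$ defined on a vertical strip $\{0\le\operatorname{Re}z\le a\}$, whose kernel is of the form $V_\k[F_n^z(\la\cdot,y\ra)](x)$ for an analytic family $F_n^z$ of Gegenbauer/Jacobi-type functions extending the kernel in \eqref{proj-kernel}, normalised so that $T^0$ recovers $\proj_n(h_\k^2;\cdot)$ (a Riesz-type smoothing, or a resolvent contour representation built on the spectral calculus of $\Delta_h$, is the natural choice). On the line $\operatorname{Re}z=0$ the orthogonality of $h$-harmonic expansions yields a polynomial-in-$|\IM z|$ bound on the $L^2\to L^2$ operator norm. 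On the line $\operatorname{Re}z=a$ one must establish a uniform pointwise bound on the kernel of $T^z$ over $E\times E$; for this the integral representation \eqref{eq:Vk} together with \lemref{lem:3.2}, \eqref{JacobiLp} and \lemref{lem:VJacobi} supplies the pointwise control, which the doubling identity of \lemref{lem:doubling} converts into the correct power of $\t$ and $V_E$. Stein interpolation between these two endpoints then delivers the displayed $L^\nu\to L^2$ estimate, and the $TT^*$ squaring above concludes the proof.

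The main obstacle will be the pointwise kernel step at $\operatorname{Re}z=a$. The sharp estimate \lemref{thm:S-estimate} is decorated with the singular weight factors $\prod_j(|x_jy_j|+n^{-1}\|\bar x-\bar y\|+n^{-2})^{-\k_j}$, which genuinely blow up when the cap $E$ sits near one of the coordinate hyperplanes; a crude $L^1\to L^\infty$ bound over $E\times E$ therefore cannot by itself produce the correct power of $V_E$. The required cancellation has to come from matching these singular factors against the precise local size of the doubling measure in \lemref{lem:doubling}, uniformly in the position of $\varpi\in S^d$. Arranging this matching is precisely the "sharp pointwise local estimate of the kernels" flagged in the introduction as going beyond Sogge's original interpolation scheme, and is where the bulk of the technical work will reside.
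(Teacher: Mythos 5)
Your $TT^*$ reduction is correct: since $\proj_n(h_\k^2;\cdot)$ is a self-adjoint projection, the $L^\nu\to L^{\nu'}$ estimate with the cutoff $\chi_{c(\varpi,\t)}$ on the left is equivalent to the one-sided $L^\nu\to L^2$ bound you write down; this is exactly the duality the paper exploits in the other direction in Section 5.1, where Theorem~\ref{thm:proj-cap} at $p=\nu$ is deduced from Theorem~\ref{thm:proj-local}. The Stein analytic interpolation with a family of the form $V_\k[G_n^z(\la x,\cdot\ra)]$ is also genuinely part of the paper's argument. But here is the gap: the paper's proof of Theorem~\ref{thm:proj-local} splits into two cases according to the quantity $\g=\sum_{\{j:\,|\varpi_j|<4\t\}}\k_j$, and Stein interpolation is only used in the boundary case $\g=\s_\k-\frac{d-1}{2}$. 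In that case, by Lemma~\ref{lem:doubling}, $\int_{c(\varpi,\t)}h_\k^2\,d\o\sim\t^{2\s_\k+1}$, so the $\t$ and volume factors in the statement collapse to a constant, and the claim reduces to the clean Proposition~\ref{prop-2-6}. The $L^1\to L^\infty$ endpoint needed for Stein (Lemma~\ref{lem-2-8}) relies essentially on this positional assumption: it gives the uniform kernel bound $|V_\k[G_n^{\i\tau}(\la x,\cdot\ra)](y)|\le c_\tau n^{\s_\k}$ for $x,y$ in the cap by exploiting that either $\k_{\rm min}=0$ or $|\varpi_1|$ is bounded away from zero. You cannot hope for this $\varpi$-independent bound on a general cap.

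In the complementary case $\g<\s_\k-\frac{d-1}{2}$, the factors $\t^{\frac{2\s_\k+1}{\s_\k+1}}$ and $\bigl[\int_{c(\varpi,\t)}h_\k^2\bigr]^{1-2/\nu}$ are not trivial, and the paper does not use analytic interpolation at all. Instead, it decomposes $\proj_n=\sum_j Y_{n,j}$ by dyadically localizing $C_n^{\l_\k}(u)$ in $1-u^2$ (Section 4.1.1), derives a cap-local $L^1\to L^\infty$ estimate for each piece carrying the factors $\t^{2\g+d}\bigl[\int_{c(\varpi,\t)}h_\k^2\bigr]^{-1}$ (Lemma~\ref{lem-2-4}, which in turn rests on Corollary~\ref{lem-2-3} and Lemma~\ref{lem-2-2}), pairs it with an $L^2\to L^2$ multiplier bound on each piece (Lemma~\ref{lem-2-5}), interpolates \emph{each scale} by Riesz--Thorin, and sums a geometric series whose convergence is precisely where the strict inequality $\g<\s_\k-\frac{d-1}{2}$ is consumed. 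Your single-family Stein argument has no counterpart of this summation over dyadic scales, and the position-dependent kernel bound it would require at $\operatorname{Re}z=0$, namely $c_\tau n^{\s_\k}\t^{2\s_\k+1}\bigl[\int_{c(\varpi,\t)}h_\k^2\bigr]^{-1}$ uniformly in $\varpi$, is not supplied by Lemma~\ref{lem:VJacobi}, Lemma~\ref{thm:S-estimate}, and Lemma~\ref{lem:doubling} alone. So while you have correctly identified the target $L^\nu\to L^2$ inequality and the relevance of Stein interpolation, the proposal misses the essential case distinction; the difficult caps near the coordinate hyperplanes are handled in the paper not by refining the Stein endpoint kernel estimate but by switching to a dyadic decomposition with real interpolation at each scale.
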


Here $\chi_E$ denotes the characteristic function of the set $E$.
Note the norm of the left hand side is taken over $c(\varpi,\t)$,
so that the above estimate is a local one.

Throughout this section, we shall fix the spherical cap $c(\varpi,\t)$.
Without loss of generality, we may assume $\varpi=(\varpi_1,\cdots,\varpi_{d+1})$
satisfying $|\varpi_k| \ge 4\t $ for $1\leq k\leq v$ and $|\varpi_k|<4\t$
for $v<k\leq d+1$. Accordingly, we define
\begin{equation} \label{def:gamma}
   \g = \g_\varpi:=\begin{cases}
   0,&\   \    \  \text{if $v=d+1$};\\
   \displaystyle\sum_{i=v+1}^{d+1}\k_i,&\    \   \text{if $v<d+1$}.\end{cases}
\end{equation}
Since  $\t \in (0,1/(8d)]$ and $\varpi\in S^d$, it follows that
\begin{equation} \label{2-5}
  0 \le \ga \leq |\k|-\min_{1\leq i\leq d+1} \k_i=\sa_\k-\tf{d-1}2.
\end{equation}

The proof of Theorem \ref{thm:proj-local} consists of two cases,
one for $\g < \s_\k -\frac{d-1}{2}$ and the other for $\g = \s_\k
-\frac{d-1}{2}$, using different methods.


\subsection{Proof of Theorem \ref{thm:proj-local}, case I: $\g < \s_\k -\frac{d-1}{2}$}
The proof is long and will be divided into several subsections.

\subsubsection {Decomposition of the projection operator} Recall
$\l_\k=\f {d-1}2+|\k|$.  Let $\xi_0\in C^\infty [0,\infty)$ be
such that $\chi_{[0,1/2]}(t)\leq \xi_0(t) \leq \chi_{[0,1]}(t)$,
and define $\xi_1(t):=\xi_0(t/4)-\xi_0(t)$. Evidently
$\text{supp}\, \xi_1\subset (1/2, 4)$ and $\xi_0(t)+
\sum_{j=1}^\infty \xi_1(4^{-j+1}t)=1$ whenever $ t\in [0,\infty)$.
Define, for $u\in [-1,1]$,
\begin{align*}
& C_{n,0}(u): = \f{n+\l_\k}{\l_\k}C_n^{\l_\k}(u) \xi_0\(n^2(1-u^2)\) \\
& C_{n,j}(u):= \f{n+\l_\k}{\l_\k}C_n^{\l_\k}(u)
\xi_1\(\f{n^2(1-u^2)}{4^{j-1}}\), \quad
   j = 1,2,\ldots, L_n,
\end{align*}
where $L_n:=\lfloor \log_2 n \rfloor + 2$. By \eqref{projection},
$\proj_n(h_\k^2;f)$ can be decomposed as
\begin{equation}\label{2-2}
\proj_n(h_\k^2; f)= \sum_{j=0}^{L_n} Y_{n,j}f, \quad\hbox{where}\quad
     Y_{n,j} f : = f \ast_\k C_{n,j}.
\end{equation}
By the definition of the convolution, the kernel of $Y_{n,j}$ is
 $V_\k [C_{n,j}(\la x, \cdot \ra)](y)$.

\subsubsection{Estimates of the kernels  $V_\k\[ C_{n,j}\la x, \cdot\ra \](y)$
and $L^\infty$ estimate}

\begin{defn}\label{def-2-3}
Given $n, v\in\mathbb{N}_0$, and  $\mu \in \mathbb{R}$,  we say a
continuous function  $F: [-1,1]\to\mathbb{R}$ belongs to the class
$\CS_n^v (\mu)$ if there exist functions $F_j$, $j=0,1,\cdots, v$
on $[-1,1]$ such that $F_j^{(j)}(t)=F(t)$, $t\in [-1,1]$, $0\leq
j\leq v$, and
\begin{equation}\label{2-3}
|F_j(t)| \le  n^{-2j+\mu} \left(1+n
\sqrt{1-|t|}\right)^{-\mu-\f12+j},\ \ \ t\in [-1,1],\   \ \
j=0,1,\cdots, v.
\end{equation}
\end{defn}

By \eqref{Est-Jacobi} and the following well known formula \cite[(4.21.7)]{Szego}
\begin{equation} \label{d-Jacobi}
    \frac{d}{dt} P_n^{(\a,\b)}(t ) = \tfrac12 (n+\a+\b+1) P_{n-1}^{(\a+1,\b+1)}(t ),
\end{equation}
it follows that $c_{v,\al} P_n^{(\a,\b)} \in \CS_n^v(\a)$ for all
$v \in \NN_0$ whenever $\al\ge \be$.

\begin{lem}\label{lem-2-2}
Assume that $\d=(\d_1,\cdots,\d_m) \in \RR^m$
satisfying $\min_{1\leq j\leq m} \da_j> 0$ and $\mu \in \RR$.  Let $F\in
\CS_n^{v}(\mu)$ with $v$ being an integer satisfying $v \ge 2m +
\sum_{j=1}^m\da_j  +|\mu|$. Let $\xi$ be a $C^\infty$ function,
supported in $[-8,8]$ and equal to constant in a neighborhood of
$0$. For $\rho\in (n^{-1},4]$, define
$$
G(u) : = F(u) \xi\(\f{1-u^2}{\rho^2}\),\qquad  u\in [-1,1].
$$
Then for $s \in [-1,1]$ and $a=(a_1,\cdots, a_m)\in [-1,1]^m$ satisfying
$\sum_{j=1}^m |a_j|+|s|\leq 1$,
\begin{align}
   & \left |\int_{[-1,1]^m} G \Bl(\sum_{j=1}^m a_jt_j+s \Br)
       \prod_{j=1}^m (1-t_j^2)^{\da_j-1}(1+t_j)\, dt_j
       \right | \label{2-4}\\
   & \qquad\quad \leq c  n^{-\f12-|\da|}\rho^{|\da|-\mu-\f12} \prod_{j=1}^m
        (|a_j|+n^{-1}\rho)^{-\da_j},\notag
 \end{align}
 where $|\da|=\sum_{j=1}^m\da_j$.
\end{lem}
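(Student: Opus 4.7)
The plan is to exploit the smoothness hypothesis (2.3) on $F$ by iterated integration by parts in each variable $t_j$. First I would partition the indices according to the size of $a_j$: let $S = \{j : |a_j| \le n^{-1}\rho\}$ (``small") and $L = \{j: |a_j| > n^{-1}\rho\}$ (``large"). For $j\in S$, the target factor $(|a_j|+n^{-1}\rho)^{-\delta_j}$ collapses to $(n^{-1}\rho)^{-\delta_j}$, which is exactly what one obtains by leaving those variables untouched and using the normalization $\int_{-1}^1 (1-t^2)^{\delta-1}(1+t)\,dt \sim 1$. For $j \in L$, the needed factor $|a_j|^{-\delta_j}$ will be produced by integration by parts.

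For each $j\in L$ I would carry out $k_j$ iterated integration-by-parts steps in the $t_j$-variable, where $k_j$ is an integer slightly larger than $\delta_j$ (so that $k_j \le v$ by the hypothesis $v\ge 2m + |\delta| + |\mu|$). The basic identity is $G(u) = \frac{d}{du}\bigl(F_1(u)\xi_\rho(u)\bigr) - F_1(u)\xi_\rho'(u)$, with $\xi_\rho(u) := \xi((1-u^2)/\rho^2)$, iterated to express $G$ as a sum of a $k_j$-fold total derivative in $u$ plus a remainder $(-1)^{k_j} F_{k_j}(u)\xi_\rho^{(k_j)}(u)$. Since $\frac{d}{du} = \frac{1}{a_j}\frac{d}{dt_j}$ along the line $u=\sum a_i t_i+s$, the total-derivative pieces can be transferred to the weight by integration by parts in $t_j$. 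To avoid non-integrable singularities from differentiating $(1-t_j^2)^{\delta_j-1}$, I would use the splitting
$$
(1-t^2)^{\delta-1}(1+t) \;=\; (1-t^2)^{\delta-1} \;-\; \frac{1}{2\delta}\,\frac{d}{dt}(1-t^2)^{\delta},
$$
whose second piece integrates by parts with no boundary contribution (as $(1-t^2)^\delta$ vanishes at $t = \pm 1$). The first piece is a pure Gegenbauer weight on which the same scheme can be iterated.

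After completing $k_j$ IBPs for each $j\in L$, the integrand is a finite sum of pieces in which $F$ is replaced by $F_{k'}$ for various $k'\le k_j$, with prefactor $\prod_{j\in L}|a_j|^{-k_j}$. On the support of $\xi_\rho$ we have $1-u^2 \lesssim \rho^2$, so (2.3) gives $|F_{k'}(u)| \lesssim n^{-k'-1/2}\rho^{k'-\mu-1/2}$ (using $n\rho > 1$). Integrating the normalized Jacobi-type weights and multiplying the contributions from all $j\in L$ yields a bound of the shape $\prod_{j\in L}|a_j|^{-k_j}(n\rho)^{k_j-\delta_j}$ times the base estimate $n^{-1/2-|\delta|}\rho^{|\delta|-\mu-1/2}$; since $|a_j| > n^{-1}\rho$ on $L$ and the $S$-factors already equal $(n^{-1}\rho)^{-\delta_j}$, this is at most $\prod_{j}(|a_j|+n^{-1}\rho)^{-\delta_j}$ times the base, which is the claimed bound.

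The main obstacle is the combinatorial bookkeeping: each derivative of $\xi_\rho$ carries a factor $\rho^{-2}$ but localizes to a shell of width $\sim\rho^2$, and the Leibniz rule generates many cross terms at each IBP step. Ensuring that all boundary values at $t_j = \pm 1$ vanish (via the $(1+t_j)$-factor at $-1$ and the derivative identity above at $+1$), and that the proliferation of sub-terms is absorbed into lower-order contributions of the recursion without losing the gain $|a_j|^{-k_j}$, is precisely where the strength $v \ge 2m + |\delta| + |\mu|$ of the smoothness hypothesis gets consumed.
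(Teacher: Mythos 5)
Your reduction to the case $|a_j|>n^{-1}\rho$ for all $j$ (absorbing the small $a_j t_j$ into $s$), and your choice of differentiating $k_j>\delta_j$ times in each $t_j$-variable, match the paper. But the central step -- eliminating the boundary singularity of the Jacobi-type weight when integrating by parts -- does not go through with the algebraic splitting you propose, and a smooth localization in $t_j$ at scale $B_j=\rho/(n|a_j|)$ seems unavoidable.

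Concretely, you want to write $G(a_jt_j+s)=a_j^{-1}\tfrac{d}{dt_j}\bigl[(F_1\xi_\rho)(a_jt_j+s)\bigr]-(F_1\xi_\rho')(a_jt_j+s)$ and move the $\tfrac{d}{dt_j}$ onto the weight $w_j(t)=(1-t^2)^{\delta_j-1}(1+t)$. The boundary term at $t_j=1$ is $a_j^{-1}(F_1\xi_\rho)(a_j+s)\,w_j(1)$, and $w_j(1)=(1-t)^{\delta_j-1}\big|_{t=1}\cdot 2^{\delta_j}$ is infinite whenever $\delta_j<1$; it is not cancelled by your identity $w_j=(1-t^2)^{\delta_j-1}-\tfrac{1}{2\delta_j}\bigl[(1-t^2)^{\delta_j}\bigr]'$, because the leftover even piece $(1-t^2)^{\delta_j-1}$ carries the same $(1-t)^{\delta_j-1}$ singularity at $t=1$, and saying ``the same scheme can be iterated'' does not produce a total-derivative representation of it. Moreover the singularity is genuinely an obstruction and not an artifact: even when $\delta_j>1$, after one differentiation the weight becomes $\sim(1-t)^{\delta_j-2}$ near $t=1$, and since you need $k_j>\delta_j$ iterations, you inevitably reach a non-integrable power. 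The point $t_j=1$ corresponds to $u$ near $1$ when $|a_j+s|$ is close to $1$, which is exactly inside the support of $\xi_\rho$, so the factor $\xi_\rho$ does not kill the boundary term either. This is why the paper inserts smooth cutoffs $\eta_0\bigl(\tfrac{1-t_j^2}{B_j}\bigr)$, $\eta_1\bigl(\tfrac{1-t_j^2}{B_j}\bigr)$ in each $t_j$: the $\eta_1$ part is supported in $\{|t_j|\le 1-\tfrac14 B_j\}$ so integration by parts creates no boundary terms and the differentiated weight stays integrable; the $\eta_0$ part is confined to a shell of width $B_j$ where the weight contributes $B_j^{\delta_j}$, which combined with the $L^\infty$ bound on $G$ already matches the target. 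Without such a localization your remainder terms $F_i\xi_\rho^{(i)}$ for $1\le i<k_j$ are also not uniformly small: a short computation shows they overshoot the claimed bound by a factor $(n\rho)^{\delta_j-i}$, which is $>1$ for $i<\delta_j$.

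A smaller but real issue is the bookkeeping claim that the integration-by-parts gain is $\prod_{j\in L}|a_j|^{-k_j}(n\rho)^{k_j-\delta_j}$ and that this is dominated by $\prod_j|a_j|^{-\delta_j}$. Since $|a_j|\le 1<n\rho$, one has $(n\rho/|a_j|)^{k_j-\delta_j}>1$, so the stated bound is actually \emph{weaker} than the target; the correct gain, extracted via the shell width $B_j$ in the paper's argument, is $|a_j|^{-k_j}B_j^{\delta_j-k_j}=|a_j|^{-\delta_j}(\rho/n)^{\delta_j-k_j}$, and it is the compensating factor $(\rho/n)^{\delta_j-k_j}$ from the localized weight integral, together with the $F$-bound $n^{-\frac12-k_j}\rho^{k_j-\mu-\frac12}$, that closes the recursion. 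So the approach as written both omits the essential localization and has an accounting error that would not cancel even if the boundary issue were ignored.
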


\begin{proof}
Without loss of generality, we may assume that  $|a_j|\ge
n^{-1}\rho$ for $1\leq j\leq m$, since otherwise we can modify the
proof by replacing $s$ with
$$s+\sum_{\{j: |a_j|< n^{-1}\rho\}} a_j
t_j.$$

Let  $\eta_0\in C^\infty(\mathbb{R})$ be such that $\eta_0(t)=1$
for $|t|\leq \f12$ and $\eta_0(t)=0$ for $|t|\ge 1$, and  let
$\eta_1(t)=1-\eta_0(t)$. Set $
 B_j : = \f    \rho{n|a_j|}$,    $ j=1,\cdots,m$.
Given $\va:=(\va_1,\cdots,\va_m)\in\{0,1\}^m$, we define
$\psi_\va:\   \ [-1,1]^m\to \mathbb{R}$ by
 $$
 \psi_\va(t) :=\xi\(\f{1-(\sum_{j=1}^m a_j t_j+s)^2}{\rho^2}\)
  \prod_{j=1}^m \eta_{\va_j}\Bl(\f{1-t^2_j}{B_j}\Br)(1+t_j) (1-t_j^2)^{\da_j-1},
$$
where $t=(t_1,\cdots, t_m)$.  We then split the integral in
(\ref{2-4}) into a finite sum:
\begin{align*}
 \sum_{\va \in \{0,1\}^m} \int_{[-1,1]^m} F\Bl(\sum_{j=1}^m a_jt_j+s\Br)
    \psi_\va(t) \, dt  =: \sum_{\va\in \{0,1\}^m}  J_\va.
\end{align*}
Thus, it is sufficient to prove that  each term $J_\va$ in the above sum
satisfies the desired inequality. By symmetry and Fubini's theorem,  we
need only  to consider the case when $\va_1=\cdots=\va_{m_1}=0$ and
$\va_{m_1+1}=\cdots=\va_{m}=1$ for some $0\leq m_1\leq m$.

Let $m_1$ and $\ve$ be fixed as in the last line. Fix $(t_1,\cdots, t_{m_1})
\in [-1,1]^{m_1}$ momentarily, and write  $s_1=\sum_{j=1}^{m_1}a_j t_j+s$.
Define
$$
\phi(t):  =\xi\(\f{1-(\sum_{j=1}^m a_j
t_j+s)^2}{\rho^2}\)\prod_{j=m_1+1}^m
\eta_{1}\Bl(\f{1-t^2_j}{B_j}\Br)(1+t_j) (1-t_j^2)^{\da_j-1}.
$$
Since the support set of each $\eta_{1}\(\f{1-t^2_j}{B_j}\)$ is a
subset of $\{t_j:  |t_j|\leq 1-\f14 B_j\}$, we can use integration
by parts $|\mathbf{l}|=\sum_{j={m_1+1}}^m \ell_j$ times to obtain
\begin{align*}
&\Bl |\int_{[-1,1]^{m-m_1}} F\Bl(\sum_{j=m_1+1}^m a_jt_j+s_1\Br)\phi(t)\,
  dt \Bl |\\
&  = \prod_{j=m_1+1}^m |a_j|^{-\ell_j}\Bl |\int_{[-1,1]^{m-m_1}}
F_{|\mathbf{l}|}\Bl(\sum_{j=m_1+1}^m a_jt_j+s_1\Br)
 \f{\p^{|\mathbf{l}|}\phi(t)}{\p^{\ell_{m_1+1}} t_{m_1+1}\cdots\p
 ^{\ell_{m}} t_m}\, dt
 \Br |\\
&\leq \prod_{j=m_1+1}^m |a_j|^{-\ell_j}\int_{[-1,1]^{m-m_1}}
 \Bl|F_{|\mathbf{l}|} \Bl(\sum_{j=m_1+1}^m
 a_jt_j+s_1\Br)\Br|\Bl|
 \f{\p^{|\mathbf{l}|}\phi(t)}{\p^{\ell_{m_1+1}} t_{m_1+1}\cdots\p^{\ell_{m}} t_m}\Br|\, dt,
\end{align*}
where $F_{|\mathbf{l}|}^{(|\mathbf{l}|)}=F$ is as in Definition
\ref{def-2-3}, and  $\mathbf{l}=(\ell_{m_1+1},\cdots, \ell_m)\in
\mathbb{N}^{m-m_1}$ satisfies    $\ell_j>\da_j$ and
$|\mathbf{l}|\ge \mu+\f12$.  Since $\xi$ is supported in $(-8,8)$,
the integrand of the last integral is zero unless
\begin{align} \label{eq:4.4}
 8\rho^2 & \ge 1- \Bl|\sum_{k=m_1+1}^m a_k t_k +s_1\Br| \\
    & \ge 1-  \sum_{k=m_1+1}^m |a_k|-|s_1|+(1-|t_j|)|a_j|
    \ge |a_j|(1-|t_j|), \notag
\end{align}
for all $m_1+1\leq j\leq m$; that is, $\f {|a_j|}{\rho^2} \leq 8
(1-|t_j|)^{-1}$ for $j=m_1+1,\cdots, m$. Also, recall that $\xi$
is constant near $0$. Hence, taking the $k$-th partial derivative
with respect to $t_j$, the $\xi$ part of $\phi$ is bounded by $c
(1-t_j)^{-k}$. So is the same derivative of the $\eta_1$ part of
$\phi$ since $B_j^{-1} \le (1-t_j^2)^{-1}$ in the support of
$\eta_1'$. Consequently, by the Lebnitz rule, we conclude
$$
\Bl| \f{\p^{|\mathbf{l}|}\phi(t)}{\p^{\ell_{m_1+1}}
t_{m_1+1}\cdots\p^{\ell_m} t_m}\Br|\,
 \leq c \prod_{j=m_1+1}^m (1-|t_j|)^{\da_j-\ell_j-1}
$$
in the support of the integrand. Next, since $\rho\ge n^{-1}$ and
$|\mathbf{l}|\ge \mu+\f12$, \eqref{eq:4.4} together with
\eqref{2-3} implies
$$
\Bl|F_{|\mathbf{l}|}\Bl(\sum_{k=m_1+1}^m
 a_kt_k+s_1\Br)\Br|\leq c
 n^{-\f12-|\mathbf{l}|}\rho^{-\mu-\f12+|\mathbf{l}|}.
$$
  It follows that
\begin{align*}
&\int_{[-1,1]^{m-m_1}} \Bl|F_{|\mathbf{l}|}(\sum_{j=m_1+1}^m
 a_jt_j+s_1)\Br|\Bl|  \f{\p^{|\mathbf{l}|}\phi(t)}{\p^{\ell_{m_1+1}}
    t_{m_1+1}\cdots\p^{\ell_m} t_m}\Br|\, dt\\
 &\qquad\leq c    n^{-\f12-|\mathbf{l}|}\rho^{-\mu-\f12+|\mathbf{l}|}
     \prod_{j=m_1+1}^m\int_{0}^{1-\f{B_j}4}
    (1-t_j)^{\da_j-\ell_j-1}\, dt_j\\
 & \qquad \leq c n^{-\f12-|\mathbf{l}|}\rho^{-\mu-\f12+|\mathbf{l}|}
    \prod_{j=m_1+1}^m B_j^{\da_j-\ell_j}\\
& \qquad  \leq c  n^{-\f12-\al}\rho^{\al-\mu-\f12}
\prod_{j=m_1+1}^m
    |a_j|^{\ell_j-\da_j},
\end{align*}
where $\al=\sum_{j=m_1+1}^m \da_j$. Thus, since
$$
\psi_\va(t)  =\phi(t)\prod_{j=1}^{m_1}
\eta_{0}\Bl(\f{1-t^2_j}{B_j}\Br)(1+t_j) (1-t_j^2)^{\da_j-1},
$$
and $\eta_0\(\f{1-t_j^2}{B_j}\)$ is supported in $\{t_j:\  \ 1-B_j \leq
|t_j|\leq 1\}$, integrating  with respect to $t_1, \cdots, t_{m_1}$
over $[-1,1]^{m_1}$ yields
\begin{align*}
J_\va & \leq \int_{[-1,1]^{m_1}}\Bl|\int_{[-1,1]^{m-m_1}}
   F \Bl(\sum_{j=1}^m a_jt_j+s\Br)\phi(t)\, dt_{m_1+1}\cdots dt_m
   \Br| \\
&\qquad\quad \times  \prod_{j=1}^{m_1} \eta_{0}\(\f{1-t^2_j}{B_j}\)(1+t_j)
  (1-t_j^2)^{\da_j-1}dt_j\\
&\leq c
    n^{-\f12-\al}\rho^{\al-\mu-\f12} \prod_{j=m_1+1}^m
   |a_j|^{-\da_j}\prod_{j=1}^{m_1}\int_{
   1-B_j\leq |t_j|\leq 1}(1-|t_j|)^{\da_j-1}\, dt_j\\
&  \leq c n^{-\f12-|\da|}\rho^{|\da|-\mu-\f12} \prod_{j=1}^m
     |a_j|^{-\da_j},
\end{align*}
where we have used $|a_j|^{\ell_j} \le 1$ in the second step.
This completes the proof.
\end{proof}

Using the relation between the Gegenbauer  and the Jacobi polynomials,
we have
$$
C_{n,j}(u)=a_n P_n^{(\l_\k-\f12, \l_\k-\f12)}(u)\xi\(\f
{1-u^2}{(2^{j-1}/n)^2}\),
$$
where $\xi=\xi_1$ or $\xi_0$, and $|a_n|\leq c n^{\l_\k+\f12}$.
Hence, using the fact that $c_{v,\k} P_n^{(\l_\k-\f12,
\l_\k-\f12)}\in \CS_n^v(\l_\k-\f12)$ for all $v\in\mathbb{N}$,
Lemma \ref{lem-2-2} has the following corollary.

\begin{cor}\label{lem-2-3}
For $x, y\in S^d$ and $j=1,2,\ldots, L_n$,
$$
\left |V_\k\Bl[ C_{n,j}(\la x, \cdot\ra)\Br](y)\right|\leq c n^{d-1}
2^{-j(d-1)/2}\prod_{i=1}^{d+1} \(|x_iy_i|+ {2^{j}}{n^{-2}}\)^{-\k_i}.
$$
\end{cor}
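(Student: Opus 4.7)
The plan is to read off the corollary as a direct application of Lemma~\ref{lem-2-2}. The identification recorded just before the corollary rewrites $C_{n,j}(u)=a_nP_n^{(\lambda_\kappa-1/2,\lambda_\kappa-1/2)}(u)\,\xi\bigl((1-u^2)/\rho^2\bigr)$ with $|a_n|\le cn^{\lambda_\kappa+1/2}$ and $\rho=2^{j-1}/n\in(n^{-1},4]$; together with \eqref{Est-Jacobi} and \eqref{d-Jacobi} this shows $P_n^{(\lambda_\kappa-1/2,\lambda_\kappa-1/2)}\in\CS_n^v(\lambda_\kappa-\tfrac12)$ for every integer $v\ge 0$. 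By the definition of $V_\kappa$ in \eqref{eq:Vk}, the quantity $V_\kappa\bigl[C_{n,j}(\la x,\cdot\ra)\bigr](y)$ is exactly a scalar multiple of the integral in \eqref{2-4}, with parameters $m=d+1$, $s=0$, $a_i=x_iy_i$, $\delta_i=\kappa_i$, $\mu=\lambda_\kappa-\tfrac12$ and $F=P_n^{(\lambda_\kappa-1/2,\lambda_\kappa-1/2)}$.

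The hypotheses of Lemma~\ref{lem-2-2} are then straightforward to check. The condition $\sum_i|a_i|+|s|\le 1$ reduces to $\sum_{i=1}^{d+1}|x_iy_i|\le 1$, which follows at once from the Cauchy--Schwarz inequality since $x,y\in S^d$. The regularity index $v$ can be taken arbitrarily large, because $F\in\CS_n^v$ for every $v$; in particular $v\ge 2(d+1)+|\kappa|+|\mu|$ is available.

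Once Lemma~\ref{lem-2-2} is invoked, the conclusion follows by bookkeeping of the exponents: the powers of $n$ combine as $|a_n|\cdot n^{-1/2-|\kappa|}=n^{\lambda_\kappa-|\kappa|}=n^{(d-1)/2}$, while $\rho^{|\kappa|-\mu-1/2}=\rho^{-(d-1)/2}$ supplies a further factor of $n^{(d-1)/2}\cdot 2^{-(j-1)(d-1)/2}$. Multiplying these gives the prefactor $n^{d-1}\cdot 2^{-j(d-1)/2}$ up to an absolute constant, and $n^{-1}\rho=2^{j-1}n^{-2}\sim 2^jn^{-2}$ produces the shifts in the product $\prod_i(|x_iy_i|+2^jn^{-2})^{-\kappa_i}$, as claimed. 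The only minor point to flag is that Lemma~\ref{lem-2-2} strictly requires $\min_i\delta_i>0$, so the degenerate cases in which some $\kappa_i=0$ are handled by the standard limit relation recorded just after \eqref{eq:Vk}. Beyond that there is no substantive obstacle, as the analytic content has already been absorbed into Lemma~\ref{lem-2-2}.
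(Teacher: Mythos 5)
Your proposal is correct and takes exactly the same route the paper does: write $C_{n,j}(u)=a_nP_n^{(\lambda_\kappa-1/2,\lambda_\kappa-1/2)}(u)\,\xi((1-u^2)/\rho^2)$ with $\rho=2^{j-1}/n$ and $|a_n|\le cn^{\lambda_\kappa+1/2}$, observe $P_n^{(\lambda_\kappa-1/2,\lambda_\kappa-1/2)}\in\mathcal{S}_n^v(\lambda_\kappa-\tfrac12)$, and feed this into Lemma~\ref{lem-2-2} with $m=d+1$, $a_i=x_iy_i$, $\delta_i=\kappa_i$, $s=0$. Your exponent bookkeeping ($n^{\lambda_\kappa+1/2}\cdot n^{-1/2-|\kappa|}\cdot\rho^{-(d-1)/2}\sim n^{d-1}2^{-j(d-1)/2}$ and $n^{-1}\rho\sim 2^jn^{-2}$) and the remarks on the degenerate $\kappa_i=0$ case are exactly what the paper leaves implicit.
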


Recall that $c(\varpi, \t)$ is a fixed spherical cap, $\ta\in
[n^{-1},\pi]$ and  $\ga = \ga_\varpi$ is defined in
\eqref{def:gamma}. We are now in a position to prove the following
$L^\infty$ estimate:

\begin{lem}\label{lem-2-4}
If $f$ is supported in $c(\varpi, \t)$, then
\begin{align*}
\sup_{x\in c(\varpi, \t)} \left |Y_{n,j}(f)(x)\right|
 \leq c n^{d-1+2\ga} 2^{-j(\f {d-1}2+\ga)} \t^{2\ga+d}\Bl[
\int_{c(\varpi, \t)}h_\k^2(x)\,
d\varpi(x)\Br]^{-1}\|f\|_{\k,1}.
\end{align*}
\end{lem}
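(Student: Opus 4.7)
\smallskip
\noindent\textbf{Proof proposal for Lemma \ref{lem-2-4}.} The plan is to use the integral representation of the convolution together with the pointwise kernel estimate from Corollary \ref{lem-2-3}, and then convert the resulting product of $|\varpi_i|$'s into the reciprocal of the $h_\k^2$-measure of the cap via the doubling estimate in Lemma \ref{lem:doubling}.

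First, since $f$ is supported on $c(\varpi,\t)$, writing $Y_{n,j}f(x) = f \ast_\k C_{n,j}$ as an integral gives
\begin{equation*}
Y_{n,j}f(x) = a_\k \int_{c(\varpi,\t)} f(y)\, V_\k\bigl[C_{n,j}(\la x,\cdot\ra)\bigr](y)\, h_\k^2(y)\, d\omega(y).
\end{equation*}
For $x,y \in c(\varpi,\t)$, Corollary \ref{lem-2-3} yields
\begin{equation*}
\bigl| V_\k[C_{n,j}(\la x,\cdot\ra)](y) \bigr| \le c\, n^{d-1}\, 2^{-j(d-1)/2} \prod_{i=1}^{d+1} \bigl(|x_i y_i| + 2^j n^{-2}\bigr)^{-\k_i},
\end{equation*}
so taking the supremum of this bound over $x,y \in c(\varpi,\t)$ will control $\sup_{x\in c(\varpi,\t)} |Y_{n,j}f(x)|$ once we pull out a factor of $\|f\|_{\k,1}$.

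The key observation is that the coordinate split introduced at the start of the section controls each factor uniformly on the cap. For $1 \le i \le v$ we have $|\varpi_i|\ge 4\t$, so $|x_i|, |y_i| \ge |\varpi_i|-\t \ge \tf{3}{4}|\varpi_i|$ and hence $|x_iy_i| + 2^j n^{-2} \ge c |\varpi_i|^2$. For $v < i \le d+1$ we simply use $|x_iy_i| + 2^j n^{-2} \ge 2^j n^{-2}$. Combining these with $\sum_{i>v}\k_i = \ga$ gives
\begin{equation*}
\prod_{i=1}^{d+1}\bigl(|x_iy_i| + 2^j n^{-2}\bigr)^{-\k_i}\le c\, \bigl(2^j n^{-2}\bigr)^{-\ga} \prod_{i\le v}|\varpi_i|^{-2\k_i},
\end{equation*}
uniformly on $c(\varpi,\t)\times c(\varpi,\t)$. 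Multiplying through by $n^{d-1}2^{-j(d-1)/2}$ produces the prefactor $c\, n^{d-1+2\ga}\,2^{-j(\f{d-1}{2}+\ga)}\,\prod_{i\le v}|\varpi_i|^{-2\k_i}$.

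It remains to identify $\prod_{i\le v}|\varpi_i|^{-2\k_i}$ with the expression involving the measure of the cap. By Lemma \ref{lem:doubling},
\begin{equation*}
\int_{c(\varpi,\t)} h_\k^2\, d\omega \sim \t^d \prod_{i=1}^{d+1}(|\varpi_i|+\t)^{2\k_i} \sim \t^{d+2\ga}\prod_{i\le v}|\varpi_i|^{2\k_i},
\end{equation*}
using $|\varpi_i|+\t\sim |\varpi_i|$ for $i\le v$ and $|\varpi_i|+\t\sim \t$ for $i>v$. Inverting, $\prod_{i\le v}|\varpi_i|^{-2\k_i} \sim \t^{d+2\ga}\bigl[\int_{c(\varpi,\t)} h_\k^2 d\omega\bigr]^{-1}$, and inserting this into the pointwise bound and integrating $|f|\,h_\k^2$ over $c(\varpi,\t)$ gives the desired estimate with the natural factor $\|f\|_{\k,1}$. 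No genuine obstacle arises here; the only point requiring care is the coordinate-by-coordinate bookkeeping of the powers of $n$, $2^j$, $\t$ and $|\varpi_i|$, and in particular that the exponent $\ga$ appearing in the conclusion is exactly the sum $\sum_{i>v}\k_i$ produced both by the kernel estimate and by the doubling measure.
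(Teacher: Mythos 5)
Your proof is correct and follows essentially the same route as the paper: integrate against the kernel bound from Corollary \ref{lem-2-3}, split the coordinates according to $|\varpi_i|\gtrless 4\t$ to control the product $\prod(|x_iy_i|+2^jn^{-2})^{-\k_i}$, and convert $\prod_{i\le v}|\varpi_i|^{-2\k_i}$ into $\t^{d+2\ga}$ times the reciprocal cap measure via Lemma \ref{lem:doubling}. The bookkeeping of powers of $n$, $2^j$, and $\t$ matches the paper's computation exactly.
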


\begin{proof}
Note that if $x \in c(\varpi,\t)$, then $|x_i-\varpi_i| \le
\|x-\varpi\|\le d (x,\varpi) \le \t$ so that $\f 34 |\varpi_i|\leq
|x_i|\leq \f 54  |\varpi_i|$ for $1\leq i\leq v$, and $|x_i|\leq
5\ta$ for $v+1\leq i \leq d+1$. It follows from Corollary
\ref{lem-2-3} that, for any $x, y\in c(\varpi,\t)$,
\begin{align*}
\Bl|V_\k\Bl[ C_{n,j}(\la x,\cdot\ra)\Br](y)\Br|
&\leq c  n^{d-1} 2^{-j(d-1)/2} \prod_{i=1}^{v}
    |\varpi_i|^{-2\k_i} \prod_{i=v+1}^{d+1} n^{2\k_i} {2^{-j\k_i}}\\
& \leq c n^{d-1} 2^{-j(\f {d-1}2+\ga)}(n\t)^{2\ga}
    \prod_{i=1}^{d+1} (|\varpi_i|+\t)^{-2\k_i}\\
&\leq c n^{d-1} 2^{-j(\f {d-1}2+\ga)}(n\t)^{2\ga} \t^d
    \Bl[\int_{c(\varpi,\t)}h_\k^2(z)\, d\og(z)\Br]^{-1},
\end{align*}
where the last step follows from the relation \eqref{eq:doubling}.
This implies that
\begin{align*}
  \sup_{x\in c(\varpi, \t)} |Y_{n,j}(f)(x)|
&\leq  \sup_{x\in c(\varpi, \t)} \int_{c(\varpi,\t)} |f(y)|\Bl|
V_{\k} \Bl[ C_{n,j}\la x, \cdot\ra\Br](y)\Br|h_\k^2(y)\,
d\og(y)\\
& \leq c n^{d-1+2\ga} 2^{-j\(\f {d-1}2+\ga\)} \t^{2\ga+d}\Bl[
\int_{c(\varpi, \t)}h_\k^2(x)\, d\og(x)\Br]^{-1}\|f\|_{\k,1},
\end{align*}
which is the desired inequality.
\end{proof}

\subsubsection{$L^2$ estimates} We prove the following estimate:

\begin{lem}\label{lem-2-5}
For any $f\in L^2(h_\k^2; S^d)$,
\begin{equation*}
        \|Y_{n,j} (f)\|_{\k,2}\leq c\,{n^{-1}} {2^{j}} \|f\|_{\k,2}.
\end{equation*}
\end{lem}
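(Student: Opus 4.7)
Plan. Since $Y_{n,j}f=f\ast_\k C_{n,j}$ is a convolution with a zonal kernel on $[-1,1]$, formula \eqref{projf*g} shows that $Y_{n,j}$ is a Fourier multiplier on the $h$-harmonic decomposition of $L^2(h_\k^2;S^d)$:
$$
\proj_m(h_\k^2; Y_{n,j}f)=\mu_m\,\proj_m(h_\k^2; f),\quad \mu_m:=b_{\l_\k}\int_0^\pi\frac{C_m^{\l_\k}(\cos\t)}{C_m^{\l_\k}(1)}\,C_{n,j}(\cos\t)\,(\sin\t)^{2\l_\k}\,d\t.
$$
By Parseval's identity applied to the mutually orthogonal decomposition $L^2(h_\k^2;S^d)=\bigoplus_m \CH_m^d(h_\k^2)$,
$$
\|Y_{n,j}f\|_{\k,2}^2=\sum_{m\ge 0}|\mu_m|^2\,\|\proj_m(h_\k^2;f)\|_{\k,2}^2\leq \Bl(\sup_{m\ge 0}|\mu_m|\Br)^{2}\|f\|_{\k,2}^2,
$$
so the lemma reduces to the uniform multiplier estimate $\sup_m|\mu_m|\leq c\,n^{-1}2^j$.

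To handle this one-dimensional estimate, I will regard $\mu_m$ as an oscillatory integral in $\t$. The cutoff $\xi_1(n^2\sin^2\t/4^{j-1})$ is supported on a $\t$-interval of length $\sim 2^j/n$ on which $\sin\t\sim 2^j/n$, and its $v$-th derivative has size at most $(n/2^j)^v$. Substituting the Szeg\H{o}/Darboux asymptotic $C_n^{\l_\k}(\cos\t)\sim A_n(\sin\t)^{-\l_\k}\cos((n+\l_\k)\t-\l_\k\pi/2)$, $A_n\sim n^{\l_\k-1}$ (valid since $n\t\sim 2^j\ge 1$ on the support) reduces the integrand, up to a slowly varying amplitude $q_m(\t)(\sin\t)^{\l_\k}\xi_1(\cdots)$ of pointwise size $\leq c(2^j/n)^{\l_\k}$ where $q_m(\t):=C_m^{\l_\k}(\cos\t)/C_m^{\l_\k}(1)$ satisfies $|q_m|\leq 1$, to a pure oscillation at frequency $n+\l_\k\sim n$. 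Iterated integration by parts against this phase trades $1/n^v$ for at most $(n/2^j)^v$ in derivatives of the amplitude (using Bernstein's inequality $\|q_m^{(v)}\|_\infty\leq m^v\leq(n/2^j)^v$ in the range $m\leq n/2^j$ together with the analogous size bounds for $\sin^{\l_\k}\t$ and $\xi_1(\cdots)$ on the support), gaining a net factor $1/2^j$ per step. Starting from the trivial estimate $|\mu_m|\leq c\,n^{\l_\k}(2^j/n)^{\l_\k+1}=c\,2^{j(\l_\k+1)}/n$ and performing $v=\lceil \l_\k\rceil$ integrations by parts yields $|\mu_m|\leq c\,n^{-1}2^j$ for $m\leq n/2^j$.

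The main obstacle is the complementary regime $m>n/2^j$, where Bernstein's inequality on $q_m$ only gives $\|q_m^{(v)}\|_\infty\leq m^v$, too large to balance $n^{-v}$ by itself. In this range I would additionally apply the Darboux asymptotic to $C_m^{\l_\k}(\cos\t)$ (now valid since $m\t\ge 1$), which produces further oscillatory phases $\cos((n\pm m+O(1))\t)$; the stationary-phase-type estimate
$$
\Bl|\int\cos(N\t)\,\xi_1(n^2\sin^2\t/4^{j-1})\,d\t\Br|\leq c\,\min\{2^j/n,\,|N|^{-1}\}
$$
obtained by one or more integrations by parts, combined with the $m$-dependent prefactor $A_nA_m/C_m^{\l_\k}(1)\sim (n/m)^{\l_\k}\leq 2^{j\l_\k}$, produces the same uniform bound. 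Patching the two dyadic ranges in $m$ then yields $\sup_m|\mu_m|\leq c\,n^{-1}2^j$ and completes the proof via the Parseval reduction above.
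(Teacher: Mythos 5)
Your Parseval/multiplier reduction is exactly right and is the same reduction the paper uses: both arguments come down to the uniform bound $\sup_m|\mu_m|\le c\,n^{-1}2^j$, with your $\mu_m$ equal, up to a constant, to the paper's $m_{n,j}(m)$.

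The estimation of the multiplier is where the two arguments diverge, and yours has a genuine gap. You substitute the one--term Darboux asymptotic $C_n^{\l_\k}(\cos\t)=A_n(\sin\t)^{-\l_\k}\cos((n+\l_\k)\t+\g)+\mathrm{Err}_n(\t)$ and then integrate by parts against the explicit phase, treating $q_m(\sin\t)^{\l_\k}\xi_1(\cdots)$ as the amplitude. The main term is handled correctly by this scheme: each integration by parts buys $1/2^j$, and $v\ge\l_\k$ steps bring the trivial bound $c\,2^{j(\l_\k+1)}/n$ down to $c\,2^j/n$. But you silently discard $\mathrm{Err}_n(\t)=O\bigl(n^{\l_\k-2}(\sin\t)^{-\l_\k-1}\bigr)$, and this error is \emph{not} ignorable: on the support of $\xi_1$ one has $n\sin\t\sim 2^j$, so $\mathrm{Err}_n$ is only a factor $2^{-j}$ smaller than the main term, and its contribution to $|\mu_m|$ (using merely $|q_m|\le 1$) is
\begin{equation*}
\int \bigl|q_m\bigr|\cdot n\cdot |\mathrm{Err}_n(\t)|\,\bigl|\xi_1\bigr|(\sin\t)^{2\l_\k}\,d\t \ \lesssim\ n^{\l_\k-1}\Bigl(\frac{2^j}{n}\Bigr)^{\l_\k}\ =\ \frac{2^{j\l_\k}}{n},
\end{equation*}
which exceeds the target $2^j/n$ whenever $\l_\k>1$. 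The same defect appears in your second regime: the cross terms (main$\times$error) in the Darboux substitutions for $C_n^{\l_\k}$ and $C_m^{\l_\k}$ produce contributions of the same order $n^{\l_\k-1}m^{-\l_\k}$, again as large as $2^{j\l_\k}/n$ near $m\sim n/2^j$. Since these errors are not pure oscillations, one cannot iterate integration by parts on them; rescuing the argument would require a higher--order Hilb/Darboux expansion with an explicitly controlled tail, which you never invoke.

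The paper sidesteps this issue entirely. It splits at $k\sim n/4$ rather than at $m\sim n/2^j$: for $k\ge n/4$ the crude bound $|(\sin t)^{\l}C_j^{\l}(\cos t)|\le c\,j^{\l-1}$ (applied to both indices) already gives $|m_{n,j}(k)|\le c\,2^j/n$ with no oscillation needed; for $k<n/4$ it linearizes $C_k^{\l}C_n^{\l}$ by the Gegenbauer product formula \eqref{2-7}, reducing the multiplier to an integral against a single Jacobi polynomial of degree $\sim n$, and then integrates by parts using the \emph{exact} derivative identity \eqref{d-Jacobi} together with the pointwise bound \eqref{Est-Jacobi}. No asymptotic expansion (hence no uncontrolled error term) ever enters. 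If you want to keep your Darboux-based route, you would need to carry the $O\bigl((n\sin\t)^{-1}\bigr)$ error to sufficiently high order, or replace it with an exact integration-by-parts scheme (via \eqref{d-Jacobi}) applied to the single Jacobi polynomial obtained after linearization, as in the paper.
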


\begin{proof}
For simplicity, we shall write $\xi_j=\xi_1$ for $j\ge 1$. Also let
$\l = \l_\k$ in this proof.
From \eqref{projf*g} and the definition of $Y_{n,j}$ in \eqref{2-2}, it
follows that  each $Y_{n,j}$ is  a multiplier operator,
$$
Y_{n,j}(f) = \sum_{k=0}^\infty m_{n,j}(k) \proj_k(h_\k^2; f),
$$
where the equality is understood in a distributional sense, and
$$
m_{n,j}(k) := c_{n,k} \int_0^{\pi} C_n^{\l}(\cos t) C_k^{\l}(\cos
t)
          \xi_j\(\f{n^2\sin^2t}{4^{j-1}}\)\sin^{2\l}t\, dt
$$
with $|c_{n,k}|\leq c  n k^{-2\l+1}$. Hence, it is enough to prove
\begin{equation}\label{2-6}
          \sup_k |m_{n,j}(k)|\leq c \, n^{-1} 2^{j}.
\end{equation}

If  $k\ge \f n4$, then using the fact that $|(\sin \t)^\l C_n^\l(\cos \t)| \le c n^{\l-1}$,
a straightforward computation gives
\begin{align*}
|m_{n,j}(k)|&\leq |c_{n,k}| \int_0^{\pi} \Bl|C_n^{\l}(\cos t)
C_k^{\l}(\cos t)
\xi_j\(\f{n^2\sin^2t}{4^{j-1}}\)\Br|\sin^{2\l} t\, d t\\
&\leq c\int_0^\pi \Bl|\xi_j\(\f{n^2\sin^2 t}{4^{j-1}}\)\Br|\,
dt\leq c \f {2^j}n,
\end{align*}
where the last step follows easily using the support of $\xi_j$.

For  $k\leq \f n4$,  we shall use the following  formula
(cf. \cite[p. 319, Theorem 6.8.2]{AAR}),
\begin{equation}\label{2-7}
 C_k^{\ld}(t) C_n^{\ld} (t)=\sum_{i=0}^{\min\{k,
n\}} a(i, k, n) C_{k+n-2i}^\ld (t),
\end{equation}
where
$$
a(i,k,n):= \f{ (k+n+\ld-2i)(\ld)_i (\ld)_{k-i} (\ld)_{n-i}
(2\ld)_{k+n-i}}{ (k+n+\ld-i) i! (k-i)! (n-i)! (\ld)_{k+n-i}}\f{
(k+n-2i)!}{(2\ld)_{k+n-2i}}.
$$
For $k \le n/4$, it is easy to see that
\begin{align} |a(i, k, n)| &\sim
 \(\f{(i+1)(\min\{k,n\}-i+1)  (k+n-i+1)}{k+n-2i+1}\) ^{\ld-1} \notag\\
  & \sim (i+1)^{\ld-1} (k-i+1)^{\ld-1}.\label{4-10-v2}
\end{align}
Consequently, it follows that for $k\leq n/4$,
\begin{align*}
    |m_{n,j}(k)|&\leq c  n k^{-2\l+1}\sum_{i=0}^k (i+1)^{\l-1}\times \\
 & \qquad \times (k-i+1)^{\l-1} \Bl|\int_0^{\pi} C_{k+n-2i}^{\l}(\cos t)
     \xi_j\(\f{n^2\sin^2t}{4^{j-1}}\)\sin^{2\l}t\, dt\Br|\\
&  \leq c n^{\l+\f12} \max_{ 3n/4 \leq m\leq 5n/4}\Bl|\int_{-1}^{1}
  P_{m}^{(\l-\f12,\l-\f12)}(s)\xi_j\(\f{n^2(1-s^2)}
  {4^{j-1}}\)(1-s^2)^{\l-\f12}\, ds\Br|.
\end{align*}
Then using the estimate \eqref{Est-Jacobi} we obtain
$$
   m_{n,0}(k)\leq c n^{2\l} \int_{1-|s|\leq c n^{-2}}(1-|s|)^{\l-\f12}\,
        ds\leq c n^{-1}.
$$
If $j\ge 1$, then for all $\ell \in \NN$, it follows that
$$
\Bl|\f{d^\ell}{ds^\ell}\( \xi_1\(\f
{n^2(1-s^2)}{4^{j-1}}\)(1-s^2)^{\l-\f12}\)\Br|
 \leq c \(\f {2^{j}}{n}\)^{2\l-1-2\ell},
$$
since $1-s^2 \sim (\f{2^j}n)^2$ in  the support of $\xi_1'$;
consequently, we obtain by integration by parts, \eqref{d-Jacobi}
and \eqref{Est-Jacobi} that
\begin{align*}
& m_{n,j}(k)\leq c n^{\l+\f12-\ell}\times\\
& \quad \times  \max_{ 3n/4\leq m\leq 5n/4}\Bl|\int_{-1}^{1}
  P_{m+\ell}^{(\l-\f12-\ell,\l-\f12-\ell)}(s) \f{d^\ell}{ds^\ell}\(
  \xi_1\(\f {n^2(1-s^2)}{4^{j-1}}\)(1-s^2)^{\l-\f12}\)\, ds\Br|\\
&\leq c 2^{j(\l-\ell)}{2^j} n^{-1} \le c 2^j n^{-1}
\end{align*}
upon choosing $\ell \ge \l$, Thus, in both cases, we get the desired estimate.
\end{proof}

\subsubsection{Proof of Theorem \ref{thm:proj-local},  case I: $\g < \s_\k -\frac{d-1}{2}$}
Recall $\nu=\f{2+2\sa_\k}{2+\sa_\k}$. We set, in this subsection,
$$
     A: = \int_{c(\varpi,\t)}h_\k^2(y)\, d\og(y).
$$

Recall the decomposition \eqref{2-2}. For a generic $f$, we set
$$
T_{n,j} f : =Y_{n,j}(f\chi_{c(\varpi,\t)})\chi_{c(\varpi,\t)}, \qquad 0\leq j\leq L_n.
$$
Clearly,  if $f$ is supported in $c(\varpi, \t)$  and  $x\in
c(\varpi,\t)$, then $T_{n,j} f(x) = Y_{n,j} f(x)$. Using  Lemmas
\ref{lem-2-4} and \ref{lem-2-5}, we have
\begin{align} \label{2-8}
\|T_{n,j}f\|_\infty &\leq c n^{2\ga+d-1}
2^{-j(\f{d-1}2+\ga)}\t^{2\ga+d} A^{-1} \|f \chi_{c(\varpi,\t)}\|_{\k,1}, \\
\|T_{n,j}f\|_{\k,2}&\leq c \, n^{-1} {2^j} \|f \chi_{c(\varpi,\t)}\|_{\k,2}. \notag
\end{align}
Hence, by the Riesz-Thorin convexity theorem, we obtain
\begin{align} \label{2-9}
 \|T_{n,j}f\|_{\k, \nu'} &\leq c n^{-1}
2^{j\(1-(\f{d+1}2+\ga)\f1{\sa_\k+1}\)}(n\t)^{\f{2\ga+d}{\sa_\k+1}}
A^{1-\f2{\nu}}\|f\|_{\k,\nu}.
\end{align}
On the other hand, using (\ref{2-8}), H\"older's inequality and
\eqref{eq:doubling}, we obtain
\begin{align}
  \|T_{n,j}f \|_{\k, \nu'} & \leq
     \|T_{n,j}f \|_{\infty}A^{1-\f1\nu}\leq c n^{2\ga+d-1}
     2^{-j\(\f{d-1}2+\ga\)}\t^{2\ga+d} A^{-\f1\nu}
       \|f\chi_{c(\varpi,\t)}\|_{\k,1}\notag\\
&\leq c n^{-1} 2^{-j\(\f{d-1}2+\ga\)}(n \t)^{2\ga+d} A^{1-\f2\nu}
\|f\|_{\k,\nu}. \label{2-10}
\end{align}

Now assume that $f$ is supported in $c(\varpi,\t)$ and $\f
{2^{j_0-1}}n\leq \ta\leq \f {2^{j_0}}n$ for some $1\leq j_0\leq
L_n$. Using (\ref{2-2}) and Minkowski's inequality, we have
\begin{align*}
 \left \|\proj_n(h_\k^2; f) \chi_{c(\varpi,\t)} \right\|_{\k,\nu'}
 \leq \sum_{j=0}^{2j_0} \|T_{n,j}f\|_{\k, \nu'}+
\sum_{j=2j_0+1}^{L_n} \|T_{n,j}f\|_{\k, \nu'} = :  \Sigma_1+\Sigma_2.
\end{align*}
For the first sum $\Sigma_1$, we use (\ref{2-9}) to obtain
\begin{align*}
 \Sigma_1 &\leq c n^{-1}(n\t)^{\f{2\ga+d}{\sa_\k+1}} A^{1-\f 2\nu}\|f\|_{\k,\nu}
  \sum_{j=0}^{2j_0} 2^{j\(1-(\f{d+1}2+\ga)\f1{\sa_\k+1}\)}\\
& \leq c n^{\f{\sa_\k}{1+\sa_\k}}\t^{\f{2\sa_\k+1}{\sa_\k+1}}A^{1-\f 2\nu}
  \|f\|_{\k,\nu},
\end{align*}
since  $\g < \s_\k- \frac{d-1}{2}$ readily implies that
$1-(\f{d+1}2+\ga)\f1{\sa_\k+1} > 0$. For the second sum
$\Sigma_2$, we use (\ref{2-10}) to obtain
\begin{align*}
\Sigma_2&\leq c n^{-1}(n\t)^{2\ga+d} A^{1-\f2\nu}
    \|f\|_{\k,\nu} \sum_{j=2j_0+1}^{\infty} 2^{-j(\f{d-1}2+\ga)} \\
&  \leq cn^{-1}(n\t) A^{1-\f2\nu} \|f\|_{\k,\nu} \leq
    c n^{-1}(n\t)^{\f{2\sa_\k+1}{\sa_\k+1}} A^{1-\f2\nu} \|f\|_{\k,\nu}\\
&= c n^{\f{\sa_\k}{1+\sa_\k}}\t^{\f{2\sa_\k+1}{\sa_\k+1}}A^{1-\f2\nu}
     \|f\|_{\k,\nu},
\end{align*}
where in the third inequality we have used the fact that $n\t \ge 1$.

Putting the above together proves Theorem \ref{thm:proj-local} in the
case $\ga<\sa_\k-\f{d-1}2$. \qed


\subsection{Proof of Theorem \ref{thm:proj-local},  case 2:
$\g = \s_\k -\frac{d-1}{2}$}

Recall that $|\varpi_j|\ge 4\ta$ for $1\leq j\leq v$, $|\varpi_j|<
4\ta$ for $v+1\leq j\leq d+1$, and
 $\g = \g_\varpi=\sum_{j=v+1}^{d+1}\k_j$.
In this case, either    $v=1$ and $|\varpi_1|=\max_{1\leq j\leq
d+1} |\varpi_j|\ge \f 1{\sqrt{d+1}}$;  or  $v\ge 2$ and
$\k_1=\cdots=\k_{v}=0$.
 Therefore, by \eqref{eq:doubling}, we have
$$
\int_{c(\varpi, \t)}h_\k^2(x)\, d\o(x) \sim
\ta^d\bigl(\prod_{j=1}^v |\varpi_j|^{2\k_j}\bigr) \ta^{2\ga}\sim
\t^{2\sa_\k +1}.
$$
Hence, Theorem \ref{thm:proj-local} in this case is equivalent to
the following proposition:

\begin{prop}\label{prop-2-6}
Let  $f$ be supported in $c(\varpi, \t)$ with $\t \in(n^{-1}, 1/(8d)]$ and let
$\nu: =\f{2\sa_\k +2}{\sa_\k+2}$ and $\nu':=\f{\nu}{\nu-1}$. Then
$$
   \left \|\proj_n(h_\k^2;f)\chi_{c(\varpi,\t)} \right \|_{\k,\nu'}
      \leq c n^{\f {\sa_\k}{(1+\sa_\k)}} \|f\|_{\k, \nu}.
$$
\end{prop}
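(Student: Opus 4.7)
The plan is to prove Proposition \ref{prop-2-6} by Stein's theorem on analytic interpolation, since the dyadic argument of case 1 reaches its critical Stein--Tomas balance exactly when $\gamma = \sigma_\kappa - \tfrac{d-1}{2}$. Indeed, in case 2 we have $A := \int_{c(\varpi,\tau)} h_\kappa^2\, d\omega \sim \tau^{2\sigma_\kappa+1}$, so the estimates of Lemmas \ref{lem-2-4}--\ref{lem-2-5} read
\[
\|T_{n,j}f\|_\infty \le c\,n^{2\sigma_\kappa}2^{-j\sigma_\kappa}\|f\chi_{c(\varpi,\tau)}\|_{\kappa,1},
\qquad
\|T_{n,j}f\|_{\kappa,2} \le c\,n^{-1}2^{j}\|f\chi_{c(\varpi,\tau)}\|_{\kappa,2},
\]
and Riesz--Thorin interpolation then gives $\|T_{n,j}\|_{L^\nu(h_\kappa^2)\to L^{\nu'}(h_\kappa^2)} \le c\,n^{\sigma_\kappa/(\sigma_\kappa+1)}$ \emph{uniformly} in $j$. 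A term-by-term summation over $0 \le j \le L_n \sim \log n$ therefore leaves a spurious factor $\log n$ in front of the target bound.

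To eliminate this logarithm I would set $\theta_* := 1/(\sigma_\kappa+1)\in(0,1)$ and introduce the analytic family
\[
U^z f := e^{z^2-\theta_*^2} \sum_{j=0}^{L_n} 2^{j\alpha_0(z-\theta_*)}\,T_{n,j}(f),\qquad 0\le \mathrm{Re}\,z\le 1,
\]
with $\alpha_0 > 0$ a free parameter, so that $U^{\theta_*} f = \chi_{c(\varpi,\tau)} \proj_n(h_\kappa^2;\chi_{c(\varpi,\tau)} f)$ is precisely the operator whose $L^\nu\to L^{\nu'}$ norm we want to control, and the factor $e^{z^2-\theta_*^2}$ guarantees Stein-admissible growth on horizontal lines. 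On $\mathrm{Re}\,z = 0$ I would bound $\|U^z f\|_{\kappa,2}$ by summing Lemma \ref{lem-2-5}, and on $\mathrm{Re}\,z = 1$ I would bound $\|U^z f\|_\infty$ by summing Lemma \ref{lem-2-4}. Stein's theorem at $z=\theta_*$ would then yield $\|U^{\theta_*}\|_{L^\nu \to L^{\nu'}} \le c\,n^{\sigma_\kappa/(\sigma_\kappa+1)}$ without the logarithm.

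The main obstacle is that, owing to the exact Stein--Tomas saturation, the two boundary geometric sums converge only under the mutually exclusive conditions $\alpha_0 > \sigma_\kappa+1$ (on $\mathrm{Re}\,z = 0$) and $\alpha_0 < \sigma_\kappa+1$ (on $\mathrm{Re}\,z = 1$). To create the needed slack I plan to upgrade Lemma \ref{lem-2-4} by an extra factor $2^{-j\epsilon}$, $\epsilon > 0$, via a finer local pointwise estimate of the kernel $V_\kappa[C_{n,j}(\langle x,\cdot\rangle)](y)$. Such a refinement should be feasible in case 2, where either $v = 1$ with $|\varpi_1|\ge 1/\sqrt{d+1}$ or $\kappa_1 = \cdots = \kappa_v = 0$: in both subcases the tangential factors $(|x_i y_i|+2^j n^{-2})^{-\kappa_i}$, $i \le v$, are automatically $O(1)$ with no $j$-dependence, and one can then extract the additional $j$-decay either by choosing a larger smoothness index in Lemma \ref{lem-2-2} or by averaging the kernel against the doubling weight on $c(\varpi,\tau)$. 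Establishing this $\epsilon$-improvement --- the sharp local pointwise estimate alluded to in the introduction --- is the heart of the proof.
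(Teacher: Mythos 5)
Your diagnosis of the obstruction is exactly right: in the critical case $\gamma=\sigma_\kappa-\tfrac{d-1}{2}$ the Riesz--Thorin bound on each dyadic block $T_{n,j}$ is $j$-independent, so a crude term-by-term sum of the Case 1 argument loses a $\log n$. But the resolution you sketch --- an analytic family $U^z=\sum_j 2^{j\alpha_0(z-\theta_*)}T_{n,j}$ rescued by a hypothetical $2^{-j\epsilon}$ improvement of Lemma~\ref{lem-2-4} --- contains a genuine gap. The extra decay you need is not available from the pointwise kernel bound: Corollary~\ref{lem-2-3} is already essentially sharp on the annulus where $C_{n,j}$ lives, and the $2^{-j\kappa_i}$ factors from the singular coordinates $i>v$ saturate the $j$-decay budget precisely when $\gamma=\sigma_\kappa-\tfrac{d-1}{2}$. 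Choosing a larger smoothness index $v$ in Lemma~\ref{lem-2-2} changes nothing (the statement already allows $v$ arbitrarily large, and the exponents in \eqref{2-4} are fixed), and it is unclear what ``averaging against the doubling weight'' would buy beyond the $L^1\to L^\infty$ and $L^2\to L^2$ estimates you already have, which are the two natural endpoints. So the step you yourself call the ``heart of the proof'' is missing, and I see no route to supply it along these lines.

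The paper handles Case 2 by a completely different device: it abandons the dyadic pieces $T_{n,j}$ altogether and runs Stein's analytic interpolation on the complex-power deformation $G_n^z(t)=(\sigma_\kappa+1)(1-z)\tfrac{n+\lambda_\kappa}{\lambda_\kappa}C_n^{\lambda_\kappa}(t)\bigl(1-t^2+n^{-2}\bigr)^{\frac{\sigma_\kappa-(\sigma_\kappa+1)z}{2}}$ of the Gegenbauer kernel, with the associated operator $\mathcal{P}_n^z f=f\ast_\kappa G_n^z$ (so $\mathcal{P}_n^{\sigma_\kappa/(\sigma_\kappa+1)}=\proj_n(h_\kappa^2)$). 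The continuous $z$-variable absorbs the $j$-tuning that your discrete sum cannot: at $\mathrm{Re}\,z=1$ one proves $\|\mathcal{P}_n^{1+\i\tau}f\|_{\kappa,2}\le c_\tau\|f\|_{\kappa,2}$ by a Gegenbauer linearization argument (Lemma~\ref{lem-2-7}), and at $\mathrm{Re}\,z=0$ one proves the local $L^1\to L^\infty$ bound $|\mathcal{P}_n^{\i\tau}f(x)|\le c_\tau n^{\sigma_\kappa}\|f\|_{\kappa,1}$ on $c(\varpi,\theta)$ (Lemma~\ref{lem-2-8}). The latter uses the specific geometry of Case 2 --- either $\kappa_{\min}=0$, in which case the bound is immediate from $\|G_n^{\i\tau}\|_\infty\lesssim n^{\lambda_\kappa}=n^{\sigma_\kappa}$, or $v=1$ and $|\varpi_1|\ge\varepsilon_d>0$, in which case a single coordinate $a=x_1y_1$ stays bounded away from zero and one can integrate by parts in the corresponding $t_1$-integral of the intertwining operator. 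That geometric dichotomy is precisely what your decomposition-based approach has no access to, and it is the ingredient that closes the argument where the uniform-in-$j$ interpolation cannot.
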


To prove Proposition \ref{prop-2-6}, we use the method of analytic
interpolation \cite{St}. For $z\in \mathbb{C}$, define
\begin{equation}\label{4-14-v2}
  \mathcal{P}_n^z f (x): = (f\ast_\k G_n^z)(x) =
      a_\k\int_{S^d} f(y) V_\k \Bl[ G^z_n (\la x, \cdot\ra)\Br](y)
       h_\k^2 (y) \, d\og(y)
\end{equation}
for $x\in S^d$, where
\begin{align}
    G_n^z(t) &= (\sa_\k +1)(1-z) \f{n+\l_\k}{\l_\k} C_n^{\l_\k} (t)
           (1-t^2+n^{-2})^{ \f {\sa_\k-(\sa_\k+1)z}2}.\label{4-15-v2}
\end{align}
From \eqref{projection}, it readily follows that
$$
     \mathcal{P}_n^{\f{\sa_\k}{1+\sa_\k}}f=\proj_n(h_\k^2;f).
$$
For the rest of this subsection, we shall use $c_\tau$ to denote a
general constant satisfying $|c_\tau| \leq c (1+|\tau|)^\ell$ for
some inessential positive number $\ell$.

\subsubsection{Estimate fot $z = 1 + \i \tau$}

\begin{lem}\label{lem-2-7}
For $\tau \in \mathbb{R}$,
$$
   \|\mathcal{P}_n^{1+ \i \tau} f\|_{\k, 2 }\le c_\tau  \|f\|_{\k, 2}.
$$
\end{lem}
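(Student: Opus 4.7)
By the convolution--multiplier identity \eqref{projf*g}, the operator $\mathcal{P}_n^z f = f\ast_\k G_n^z$ acts as a Gegenbauer Fourier multiplier on the $h$-harmonic decomposition,
\begin{equation*}
\mathcal{P}_n^z f = \sum_{k\ge 0} m_n(k;z)\,\proj_k(h_\k^2;f), \qquad m_n(k;z) = b_{\lambda_\k}\int_0^\pi \frac{C_k^{\lambda_\k}(\cos\theta)}{C_k^{\lambda_\k}(1)}\,G_n^z(\cos\theta)(\sin\theta)^{2\lambda_\k}\,d\theta.
\end{equation*}
Mutual orthogonality of the subspaces $\CH_k^d(h_\k^2)$ in $L^2(h_\k^2;S^d)$ and Parseval's identity yield $\|\mathcal{P}_n^z f\|_{\k,2}\le (\sup_k|m_n(k;z)|)\|f\|_{\k,2}$, so the lemma reduces to the uniform bound $\sup_{k\ge 0}|m_n(k;1+\i\tau)|\le c_\tau$.

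At $z=1+\i\tau$ the prefactor $(\sigma_\k+1)(1-z)=-(\sigma_\k+1)\i\tau$ has modulus $\le c(1+|\tau|)$, and the exponent in \eqref{4-15-v2} equals $-\tfrac12-\i\eta$ with $\eta=(\sigma_\k+1)\tau/2$, so its real part is $-\tfrac12$. Substituting $u=\cos\theta$ one obtains
\begin{equation*}
m_n(k;1+\i\tau)=c_\tau\,\frac{n+\lambda_\k}{\lambda_\k}\,b_{\lambda_\k}\int_{-1}^{1}\frac{C_k^{\lambda_\k}(u)C_n^{\lambda_\k}(u)}{C_k^{\lambda_\k}(1)}(1-u^2+n^{-2})^{-\tfrac12-\i\eta}(1-u^2)^{\lambda_\k-\tfrac12}\,du.
\end{equation*}
I would then invoke the Gegenbauer linearization \eqref{2-7} to rewrite $C_k^{\lambda_\k}(u)C_n^{\lambda_\k}(u)=\sum_i a(i,k,n)C_{k+n-2i}^{\lambda_\k}(u)$, reducing the problem to controlling the single--polynomial integrals
$I(N):=\int_{-1}^1 C_N^{\lambda_\k}(u)(1-u^2+n^{-2})^{-\tfrac12-\i\eta}(1-u^2)^{\lambda_\k-\tfrac12}du$ for $N=k+n-2i$. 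Each $I(N)$ I would estimate by the Rodrigues formula, writing $(1-u^2)^{\lambda_\k-1/2}C_N^{\lambda_\k}(u)=c_N\,\tfrac{d^N}{du^N}[(1-u^2)^{N+\lambda_\k-1/2}]$ and performing $N$ integrations by parts; this turns $I(N)$ into an integral of $[(1-u^2+n^{-2})^{-1/2-\i\eta}]^{(N)}$ against $(1-u^2)^{N+\lambda_\k-1/2}$, and the pointwise Fa\`a di Bruno bound $|[(1-u^2+n^{-2})^{-1/2-\i\eta}]^{(N)}|\le c_N(1+|\tau|)^N(1-u^2+n^{-2})^{-1/2-N}$ controls the resulting integrand.

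Assembling these estimates against the explicit asymptotics $C_k^{\lambda_\k}(1)\sim k^{2\lambda_\k-1}$ and $a(i,k,n)\sim(i+1)^{\lambda_\k-1}(\min\{k,n\}-i+1)^{\lambda_\k-1}$ from \eqref{4-10-v2}, combined with the elementary bound $\int_{-1}^1(1-u^2)^{N+\lambda_\k-1/2}(1-u^2+n^{-2})^{-1/2-N}du=O(1)$ valid for $\lambda_\k>0$, should collapse the sum $\sum_i a(i,k,n) I(k+n-2i)$ to an $O(c_\tau k^{2\lambda_\k-1}/n)$ bound, which cancels the prefactor $n/C_k^{\lambda_\k}(1)$ and delivers the required uniform estimate. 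The main obstacle will be calibrating the number of integrations by parts as a function of $N$: for $N\sim n$ the Rodrigues procedure provides factorial savings $2^{-N}/N!$ that easily dominate the $(1+|\tau|)^N$ growth from derivatives of the weight, but when $N$ is small (which occurs when the summation index $i$ lies near $(k+n)/2$ in the regime $k\sim n$) only a bounded number of integrations by parts is available, and one must instead use the direct Jacobi bound \eqref{Est-Jacobi} on $C_N^{\lambda_\k}$ together with the corresponding asymptotics of $a(i,k,n)$ at such $i$; reconciling these two regimes into a single uniform estimate is the technical heart of the argument.
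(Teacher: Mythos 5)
Your reduction to a multiplier bound $\sup_k|m_n(k;1+\i\tau)|\le c_\tau$ is the right first step and matches the paper. The linearization \eqref{2-7} combined with a \emph{fixed} number of integrations by parts (not $N$ of them) is also what the paper does --- but only in the range $k<n/4$, where every degree $N=k+n-2i$ appearing in the linearization satisfies $N\sim n$, so a bounded number of integrations by parts gives uniform decay and the absolute-value sum $\sum_i|a(i,k,n)||I(N)|$ is indeed $O(k^{2\lambda_\k-1}/n)$. The genuine gap is in the complementary range $k\gtrsim n$, which you identify as ``the technical heart'' but do not resolve, and which in fact cannot be resolved by any absolute-value argument of the kind you describe. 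For $k\sim n$ the degrees $N=k+n-2i$ sweep down to $O(1)$; there the coefficients $a(i,k,n)$ do not decay (one finds $c_{n,k}\,a(i,n,n)=O(1)$ for a range of $i$ of length $\sim n$), and the single-polynomial integrals $I(N)$ decay only like $1/N$, coming from the integrable $t^{\lambda_\k-1}$-type singularity of $(\sin^2t+n^{-2})^{-1/2}(\sin t)^{2\lambda_\k}$ near $t=0$. Summing absolute values therefore produces $\sum_{N\lesssim n}N^{-1}\sim\log n$, and the final estimate degrades to $|\tau|\log n$ rather than $c_\tau$. The boundedness is rescued \emph{only} by the oscillation of the complex phase $t^{2\i\tau'}$: the model integral $\int_{4/n}^{\pi/2}t^{-1+2\i\tau'}e^{\i\ell t}\,dt$ is bounded by $c/|\tau'|$ (antiderivative $t^{2\i\tau'}/(2\i\tau')$), and this $1/|\tau'|$ exactly cancels the prefactor $\tau'=-\tfrac{\s_\k+1}{2}\tau$ that you correctly note comes from $(1-z)$ at $z=1+\i\tau$. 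The paper captures this cancellation by invoking the Hilb-type asymptotics of $P_m^{(\l_\k-1/2,\l_\k-1/2)}(\cos t)$ and explicitly evaluating the resulting oscillatory integral; the linearization/Rodrigues route discards the phase information and so cannot reproduce it.

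Separately, the Rodrigues step itself is problematic even where it is not needed: performing $N$ integrations by parts transfers $N$ derivatives onto $(1-u^2+n^{-2})^{-1/2-\i\eta}$, whose $N$th derivative carries not just the factor $(1+|\tau|)^N$ you note but also combinatorial constants that grow factorially in $N$ (Fa\`a di Bruno), and the $1/(2^NN!)$ from the Rodrigues normalization does not dominate $(1+|\tau|)^NN!$ together with the resulting $(1-u^2+n^{-2})^{-1/2-N}$ blow-up. The paper avoids this entirely by integrating by parts only $\ell>\l_\k$ times, a number independent of $N$ and $n$, which suffices because in that range $N\sim n$ and one gains $n^{-\ell}$ per integration by parts via \eqref{d-Jacobi}. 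So the safe part of your plan coincides with the paper's $k<n/4$ argument once the Rodrigues formula is replaced by finitely many integrations by parts, and what is missing is the oscillatory-integral estimate that handles $k\gtrsim n$.
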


\begin{proof}
From \eqref{4-14-v2}, \eqref{4-15-v2} and \eqref{projf*g}, it
follows that
$$
 \proj_k (h_\k^2; \mathcal{P}_n^{1+\i\tau} f) =  J_n(k)  \proj_k(h_\k^2; f),
    \qquad k=0,1,\cdots,
$$
where
\begin{equation*}
J_{n}(k):=  \CO(1) n k^{-2\l_\k+1} \tau' \int_0^\pi
 C_k^{\l_\k}(\cos t) C_n^{\l_\k}(\cos t)
 (\sin^2 t+n^{-2})^{-\f12+i\tau'}(\sin t)^{2\l_\k}\, dt
\end{equation*}
and $ \tau'=-\f{\sa_\k+1}2\tau$. Therefore, it is sufficient to prove
\begin{equation}\label{2-12}
        |J_n(k)| \leq c_{\tau}, \qquad   \forall k,n\in\mathbb{N}.
\end{equation}

For $k<\f n4$, (\ref{2-12}) can be shown as in the proof of Lemma
 \ref{2-5}. In fact,   using  (\ref{2-7})and  (\ref{4-10-v2}),   we  obtain
\begin{align*}
 |J_n(k)| &\leq c |\tau| n^{\l_\k+\f12}  \\
  & \times \max_{3n/4\leq m \leq
5n/4}\Bl|\int_{-1}^1 P_m^{(\l_\k-\f12,\l_\k-\f12)} (s) (1-s^2
+n^{-2})^{-\f12+i\tau'}(1-s^2)^{\l_\k-\f12}\, ds \Br|,
\end{align*}
which is controlled  by
\begin{align*}
&c_\tau+ c_\tau n^{\l_\k+\f12-\ell} \max_{3n/4\leq m \leq
5n/4}\int_{-1+n^{-2}}^{1-n^{-2}}
\Bl|P_{m+\ell}^{(\l_\k-\f12-\ell,\l_\k-\f12-\ell)}
(s)\times \\
&\hspace{1cm}\times \f{d^\ell}{ds^\ell}\( (1-s^2
+n^{-2})^{-\f12+i\tau'}(1-s^2)^{\l_\k-\f12}\)\Br|\, ds\\
&\leq c_\tau'
\end{align*}
using integration by parts $\ell>\l_\k$ times. This proves (\ref{2-12}) for
$k<\f n4$.

For $k\ge \f n4$, (\ref{2-12}) can be established exactly as in
\cite[p. 54-55]{So1} (see also \cite[p.76--81]{WL}). For
completeness, we sketch the proof as follows. Since $C_j^{\l}(t)
= \CO(1) j^{\l-\f12}P_j^{(\l-\f12,\l-\f12)}(t)$ and
$P_j^{(\al,\b)}(-t)=(-1)^j P_j^{(\al,\be)}(t)$, we can write
\begin{align*}
J_{n}(k)&= \CO(1)k^{-\l_\k+\f12}n^{\l_\k+\f12}
 \tau'\Bl[ \int_0^{4n^{-1}} +\int_{4n^{-1}} ^{\f\pi2}\Br]
 P_k^{(\l_\k-\f12, \l_\k-\f12)}(\cos t) \\
  \   \   \  &\times P_n^{(\l_\k-\f12, \l_\k-\f12)}(\cos t)
 (\sin^2 t+n^{-2})^{-\f12+i\tau'}(\sin t)^{2\l_\k}\, dt\\
 &=: J_{n,1}(k)+J_{n,2}(k).
 \end{align*}
 Since  $|P_j^{(\al,\al)}(t)|\leq c j^\al$, a straightforward
 calculation shows $|J_{n,1}(k)|\leq c_\tau$. To estimate
 $J_{n,2}(k)$, we need  the asymptotics of the Jacobi polynomials as given in
\cite[p. 198]{Szego},
$$
 P_j^{(\alpha,\beta)}(\cos t) =
  \pi^{-\frac12} j^{-\frac12}   (\sin \tfrac{t}2 )^{-\a-\frac12}
  (\cos \tfrac{t}2)^{-\b-\frac12}
        \left[ \cos (N_j t+\tau_\al) + \CO(1) (j\sin t)^{-1} \right]
$$
for $ j^{-1} \le t \le \pi -  j^{-1}$, where $N_j=
j+\frac{\alpha+\beta+1}2$ and $\tau_\al = -\frac{\pi}2
({\alpha+\frac12}).$ Applying this asymptotic formula with $\a
=\b= \l_\k-1/2$,   we obtain, for $k\ge \f n4$ and $4n^{-1}\leq t
\leq \f \pi2$,
\begin{align*}
&k^{-\l_\k+\f12}n^{\l_\k+\f12}P_k^{(\l_\k-\f12, \l_\k-\f12)}(\cos
t)  P_n^{(\l_\k-\f12, \l_\k-\f12)}(\cos t)
 (\sin t)^{2\l_\k}\\
&=\CO(1)\Bigl[ \cos \bigl(( k-n)t\bigr)+\cos \bigl((k+n+2\l_\k)t
-\l_\k \pi\bigr)\Bigr]+ \CO\( \f 1{nt}\)
\end{align*}
using the cosine addition formula. Also, note that
$$
\Bigl( \f 1 {\sin^2 t+n^{-2}}\Bigr)^{\f 12 -i\tau'} =
t^{-1+2i\tau'} + \CO(t) + \CO \(n^{-2}t^{-3}\),\   \   \ 4n^{-1}\leq t
\leq \f \pi2.
$$
It follows that
\begin{align*}
|J_{n,2}(k)|&\leq c_{\tau} + c_{\tau}
\sup_{\ell\in\mathbb{R}}\Bl|2\tau'
\int_{4 n^{-1}}^{\f\pi2} t^{-1+2i\tau'} e^{i\ell t}\, dt \Br|\\
&\leq c_{\tau} + c_{\tau} \sup_{a<b} \Bl|\int_{a}^b  e^{i t}\,
dt^{2i\tau'} \Br|\leq c_\tau.
\end{align*}
This
  proves the desired inequality (\ref{2-12}) for
$k\ge \f n4$.
\end{proof}

\subsubsection{Estimate fot $z = \i \tau$}

\begin{lem}\label{lem-2-8}
If $\tau \in \mathbb{R}$ and $f$ is supported in $c(\varpi,\t)$
then
\begin{equation*}
\sup_{x\in c(\varpi,\t)} | \mathcal{P}_n^{\i \tau} f(x)| \le
c_\tau n^{ \sa_\k} \|f\|_{1,\k}.
\end{equation*}
\end{lem}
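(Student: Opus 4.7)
The plan is to reduce the claim to a pointwise kernel estimate. Since $\mathcal{P}_n^{\i\tau} f(x)=a_\k\int_{S^d} f(y) V_\k[G_n^{\i\tau}(\la x,\cdot\ra)](y) h_\k^2(y)\,d\og(y)$ and $f$ is supported in $c(\varpi,\t)$, it suffices to prove $|V_\k[G_n^{\i\tau}(\la x,\cdot\ra)](y)|\le c_\tau n^{\sa_\k}$ for all $x,y\in c(\varpi,\t)$. At $z=\i\tau$ the exponent $(\sa_\k-(\sa_\k+1)z)/2$ in \eqref{4-15-v2} has real part $\sa_\k/2$, so $|G_n^{\i\tau}(u)|\le c_\tau n\,|C_n^{\l_\k}(u)|\,(1-u^2+n^{-2})^{\sa_\k/2}$. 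The naive pointwise bound from this is $c_\tau n^{\l_\k+\k_{\rm min}}$, which is too weak when $\k_{\rm min}>0$; the gain must come from averaging by $V_\k$ and from the geometric constraint that $|\varpi_i|<4\t$ for $i>v$.

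I would dyadically decompose $G_n^{\i\tau}=\sum_{j=0}^{L_n} g_{n,j}$ relative to $1-u^2$, with $g_{n,j}$ supported where $1-u^2\sim\rho_j^2$, $\rho_j=2^{j-1}/n$, for $j\ge 1$ and where $1-u^2\le n^{-2}$ for $j=0$. On the $j$-th annulus the slowly-varying factor $(1-u^2+n^{-2})^{(\sa_\k-(\sa_\k+1)\i\tau)/2}$ equals $\rho_j^{\sa_\k}$ times a smooth bounded function of $(1-u^2)/\rho_j^2$. Absorbing this into a modified cutoff $\wt\xi_j$ with $j$-uniform derivative bounds, and writing $C_n^{\l_\k}=\CO(1)\,n^{\l_\k-\f12}P_n^{(\l_\k-\f12,\l_\k-\f12)}$, each block takes the form $c_\tau n^{\l_\k+\f12}\rho_j^{\sa_\k} P_n^{(\l_\k-\f12,\l_\k-\f12)}(u)\wt\xi_j((1-u^2)/\rho_j^2)$, to which \lemref{lem-2-2} applies with $\mu=\l_\k-\f12$.

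For each $j\ge 1$ I would apply \lemref{lem-2-2} with $F=cP_n^{(\l_\k-\f12,\l_\k-\f12)}\in\CS_n^v(\l_\k-\f12)$ and $a_i=x_iy_i$. In case $v=1$, $|\varpi_1|\gtrsim 1$, the factor $(|x_1y_1|+n^{-1}\rho_j)^{-\k_1}$ is bounded since $|x_1y_1|$ is bounded below, and the product over $i>1$ is controlled by $(|x_iy_i|+n^{-1}\rho_j)^{-\k_i}\le(n^{-1}\rho_j)^{-\k_i}$. In the case $v\ge 2$, $\k_1=\cdots=\k_v=0$, the $t_i$-integrals for $i\le v$ collapse to evaluation at $t_i=1$ under the limit convention, reducing the problem to an integral over $(t_{v+1},\ldots,t_{d+1})$ with $m=d+1-v$, $s=\sum_{i\le v}x_iy_i$ and $\d_i=\k_i$ for $i>v$. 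In both subcases the case 2 identity $\ga=\sa_\k-\f{d-1}2$ causes the exponents of $\rho_j$ and of $n$ produced by \lemref{lem-2-2} to combine precisely to $|V_\k[g_{n,j}(\la x,\cdot\ra)](y)|\le c_\tau n^{\sa_\k}$. The block $j=0$ is handled directly from the pointwise estimate $|g_{n,0}|\le c_\tau n^{\l_\k+\k_{\rm min}}$ together with the fact that $\supp\xi_0$ lies in a $u$-interval of length $\CO(n^{-2})$.

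The main obstacle I anticipate is controlling $\sum_{j=0}^{L_n}$ without a spurious $\log n$ loss: each block contributes exactly $c_\tau n^{\sa_\k}$, so a naive sum would cost a logarithm. The missing geometric decay has to be extracted from a finer use of $(|x_iy_i|+n^{-1}\rho_j)^{-\k_i}$, namely by splitting the $j$-range at the threshold $2^{j-1}\sim n^2|x_iy_i|$ for each $i>v$: below the threshold one has $(|x_iy_i|+n^{-1}\rho_j)^{-\k_i}\le |x_iy_i|^{-\k_i}$ (constant in $j$, paired with the decaying factor $\rho_j^\ga$ from the prefactor), while above the threshold one has $(n^{-1}\rho_j)^{-\k_i}$ (which decays in $j$ with the right rate once combined with $\rho_j^\ga$). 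Verifying that the resulting two sub-sums are each geometric and totaling $\CO(n^{\sa_\k})$ uniformly in $(x,y)\in c(\varpi,\t)$ is where the bulk of the technical work lies.
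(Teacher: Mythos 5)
Your reduction to the pointwise kernel bound $|V_\k[G_n^{\i\tau}(\la x,\cdot\ra)](y)|\leq c_\tau n^{\s_\k}$ is the same as the paper's; the difference is in how you propose to prove it. There is a genuine gap in the dyadic strategy, and your own diagnosis of the obstacle (avoiding a $\log n$ loss over the $\sim\log_2 n$ blocks) is exactly where it fails. Applying Lemma~\ref{lem-2-2} to the $j$-th block with the prefactor $c_\tau n^{\l_\k+\frac12}\rho_j^{\s_\k}$ gives, after simplification using $\l_\k-|\k|=\frac{d-1}{2}$ and $\s_\k-\frac{d-1}{2}=\g$ (case 2),
$$
c_\tau\, n^{\frac{d-1}{2}}\,\rho_j^{\g}\prod_{i>v}\bigl(|x_iy_i|+n^{-1}\rho_j\bigr)^{-\k_i}.
$$
Your proposed threshold splitting at $2^{j-1}\sim n^2|x_iy_i|$ does not repair the sum: above the threshold (i.e.\ $n^{-1}\rho_j\geq |x_iy_i|$ for all $i>v$) the quantity $\rho_j^\g\prod_{i>v}(n^{-1}\rho_j)^{-\k_i}=\rho_j^\g(n^{-1}\rho_j)^{-\g}=n^\g$ is \emph{constant} in $j$, not decaying. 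Since some $x_iy_i$ with $i>v$ can be arbitrarily small (indeed zero) for $x,y\in c(\varpi,\t)$, the entire range of $j$ can sit above threshold, and the sum is still $\CO(n^{\s_\k}\log n)$. This is not an artifact of your argument: the Case~1 proof of Theorem~\ref{thm:proj-local}, which is exactly the dyadic method you are invoking, degenerates at $\g=\s_\k-\frac{d-1}2$ because the exponent $1-(\frac{d+1}{2}+\g)\frac1{\s_\k+1}$ in $\Sigma_1$ becomes $0$; the whole point of the analytic-interpolation machinery of Case~2 (Lemmas~\ref{lem-2-7} and~\ref{lem-2-8}) is to eliminate precisely this logarithm, so proving Lemma~\ref{lem-2-8} by a dyadic sum is circular.

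The paper's proof avoids any dyadic sum. First it disposes of $\k_{\rm min}=0$ trivially (then $\|G_n^{\i\tau}\|_\infty\leq c_\tau n^{\l_\k}=c_\tau n^{\s_\k}$); so $\k_{\rm min}>0$, which in case 2 forces $v=1$ and hence $|x_1y_1|\geq\ve_d>0$ for $x,y\in c(\varpi,\t)$. It then freezes $t_2,\ldots,t_{d+1}$, reducing matters to the single one-dimensional estimate \eqref{2-14} for $\int_{-1}^1 G_n^{\i\tau}(at_1+s)(1-t_1^2)^{\k_1-1}(1+t_1)\,dt_1$ with $a=x_1y_1$, $\d=\k_1=\k_{\rm min}$. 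That integral is bounded by $c_\tau n^{\s_\k}$ directly, by a \emph{two-piece} (not dyadic) split $I_0+I_1$ at the $s$-adapted scale $B=\bigl(n^{-1}+\sqrt{1-|a+s|}\,\bigr)/(4n)$: $I_0$ from the near-endpoint pointwise bound $|G_n^{\i\tau}|\leq c\,n^{\s_\k}B^{-\k_{\rm min}}$ together with $\int_{1-B}^1(1-t)^{\d-1}dt\leq B^\d$ (using $\d\geq\k_{\rm min}$), and $I_1$ from integration by parts $\ell$ times, gaining $n^{-\ell}$ against $(1-t)^{-\ell}$ derivatives. Because the only sum in sight (over the $t_1$-integral cases) is a fixed two-term split, no logarithm appears. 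A secondary remark: your $j=0$ handling via the raw pointwise bound $|g_{n,0}|\leq c_\tau n^{\l_\k+\k_{\rm min}}$ is also too weak by a factor $n^{2\k_{\rm min}}$; that block too must be integrated against the $t_1$-weight to see the gain.
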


\begin{proof}
Since $f$ is supported in $c(\varpi,\t)$, we have
\begin{align*}
\sup_{x\in c(\varpi,\t)}|\mathcal{P}_n^{\i\tau} (f)(x)|& \leq
\sup_{x\in c(\varpi,\t)}\int_{c(\varpi,\t)} | f(y)|\left| V_\k
\left[
      G^{\i\tau}_n (\la x,\cdot\ra)\right](y)\right| h_\k^2 (y) \, d\og(y)\\
&\leq \|f\|_{1,\k} \sup_{x, y \in c(\varpi, \t)}\left| V_\k \[
    G^{\i\tau}_n (\la x, \cdot\ra)\](y)\right|.
\end{align*}
Thus, it is sufficient to prove
\begin{equation}\label{2-13}
\left | V_\k \[ G^{\i\tau}_n (\la x, \cdot\ra)\](y)\right|\leq
 c_\tau n^{ \sa_\k}\qquad   \text{for all $x, y\in c(\varpi,\t)$}.
 \end{equation}
 We note that \eqref{2-13} is trivial when $\k_{\rm min}=0$ since
 in this case $\|G^{i\tau}_n\|_\infty \leq c_\tau n^{\l_\k}=c_\tau
 n^{\sa_\k}$. So we shall assume $\k_{\rm min}>0$ for the rest of
 the proof.

To prove  (\ref{2-13}), we claim that it's enough to prove that
\begin{equation}\label{2-14}
\Bl|\int_{-1}^1 G^{\i\tau}_n (at+s)(1-t^2)^{\da-1}(1+t)\, dt \Br|
 \leq c_{\tau} n^{\sa_\k},
\end{equation}
whenever  $|a|\ge \va_d > 0$, $|a|+|s|\leq 1$,  $\da\ge \k_{\rm
min}$, where $c_\tau$ is independent of  $s$.

To see this, let $x, y\in c(\varpi,\t)$ and without loss of
generality, assume $\varpi_1=\max_{1\leq j\leq d+1}|\varpi_j|$.
Then   $\varpi_1 \ge 1/\sqrt{1+d}$, which implies that $|x_1 |,
|y_1| \ge 1/\sqrt{d+1} - \t \ge 1/\sqrt{d+1}-1/(8d)>0$, so that
$|x_1y_1| \ge \va_d >0$. Thus, invoking  (\ref{2-14}) with $a=
x_1y_1$, $\da=\k_1$ and $s=\sum_{j=2}^{d+1} t_j x_jy_j$ gives
$$
\left|\int_{-1}^1 G^{\i\tau}_n \Bl(\sum_{j=1}^{d+1}x_jy_jt_j\Br)
       (1-t_1^2)^{\k_1 -1} (1+t_1)\, dt_1 \right|\leq c_{\tau} n^{\sa_\k}.
$$
The desired inequality (\ref{2-13}) then follows by the Fubini theorem and the
integral representation of $V_\k$ in \eqref{eq:Vk}. This proves the claim.

For the proof of (\ref{2-14}), by symmetry, it is sufficient to prove
\begin{equation}\label{2-15}
\left |\int_{-1}^1 G^{\i\tau}_n (at+s)(1-t)^{\da-1}\xi(t)\, dt \right|
     \leq c_{\tau} n^{\sa_\k},
\end{equation}
where $\xi$ is a $C^\infty$ function supported in $[-\f12,1]$,
whenever $|a|\ge \va_d>0$, $|a|+|s|\leq 1$ and $\da\ge \k_{\rm
min}$.

Let $\eta_0\in C^\infty(\mathbb{R})$ be such that $\chi_{[-\f12,\f12]}\leq \eta_0
\leq \chi_{[-1,1]}$, and let $\eta_1(t):=1-\eta_0(t)$.
Set, in this subsection,
$$
     B:= \f{ n^{-1} +\sqrt{1-|a+s|}} {4n}.
$$
We then split the integral in (\ref{2-15}) into a sum $I_0(a,s)+I_1(a,s)$ with
\begin{align*}
I_j(a,s) :=\int_{-1}^1  G^{\i\tau}_n (at+s)\eta_j
    \Bl(\f{1-t}{B}\Br)(1-t)^{\da-1}\xi(t)\, dt,  \qquad    j=0,1.
\end{align*}

It is easy to verify that $1 +n \sqrt{ 1-|at+s|} \sim 1+n \sqrt{1-|a+s|}$ whenever
$t\in [1-B, 1]\cap [-1,1]$. Therefore,   for $1-B\leq t\leq 1$,  using
\eqref{Est-Jacobi},
\begin{align*}
 |G_n^{\i\tau}(at+s)| \leq c n^{\l_\k} (n^{-1}+\sqrt{1-|at+s|})^{-\l_\k+\sa_\k}
     \leq c\,n^{\sa_\k}  B^{-\k_{\rm min}},
\end{align*}
which implies that
\begin{align*}
|I_0(a, s)| \leq   c \int_{\max\{1-B, -\f12\}}^1 |G_n^{\i\tau}(at+s)
    | (1-t)^{\da-1}\, dt\leq c n^{\sa_\k}  B^{\da- \k_{\rm min}} \leq c n^{\sa_\k}.
\end{align*}

To estimate $I_1(a,s)$, we write
$$
G^{\i\tau}_n (at+s)\eta_1\Bl(\f{1-t}{B}\Br)(1-t)^{\da-1}\xi(t)=c_n
   P_n^{(\l_\k-\f12,\l_\k-\f12)}(at+s)\vi(t),
$$ where $|c_n|\leq c_\tau n^{\l_\k+\f12}$ and
 \begin{equation*}
   \vi(t): = \(1-(at+s)^2+n^{-2}\)^{\f{\sa_\k}2-\f{\sa_\k+1}2\i\tau}
     \eta_1\Bl(\f{1-t}B\Br)\xi(t)(1-t)^{\da-1}.
\end{equation*}
Recall $|a|\ge \va_d>0$.  Using integration by parts $\ell$ times
gives
$$
|I_1(a,s)|\leq c n^{\l_\k+\f12-\ell}\int_{-1}^1
 \left|P_{n+\ell}^{(\l_\k-\f12-\ell,\l_\k-\f12-\ell)}(at+s)\right| |\vi^{(\ell)}(t)|\, dt.
$$
If $-\f12\leq t \leq 1-B/2$, then
$
1-|at+s|\ge 1-|a|-|s|+(1-|t|)|a|\ge c(1-t)\ge c B\ge c n^{-2}
$
which  implies, in particular,
$\(1-(at+s)^2+n^{-2}\)^{-1}\leq c (1-t)^{-1}$.
Since $\vi$ is supported in $(-\f12, 1-\f B2)$, which gives $B^{-1} \le (1-t)^{-1}$,
it follows from Lebnitz' rule that
$$
|\vi^{(\ell)}(t)|\leq c_{\tau} (1-|at+s|)^{\f{\sa_\k}2}(1-t)^{\da-\ell-1}.
$$
Therefore, choosing $\ell>2\da$ and recalling that $\d \ge \k_{\rm min}$,
we have by \eqref{Est-Jacobi}  that
\begin{align*}
|I_1(a,s)|&\leq c_{\tau}n^{\l_\k-\ell}\int_{-\f12}^{1-\f B2}(1-|at+s|)^{\f
  {\sa_\k+\ell}2-\f{\l_\k}2}(1-t)^{\da-1-\ell}\, dt\\
& \leq c_{\tau} n^{\l_\k-\ell}\int_{\f {B|a|}2}^{\f 32|a|}
      (1-|a+s|+u)^{\f{\sa_\k+\ell}2-\f{\l_\k} 2}u^{\da-1-\ell}\,du.
\end{align*}
Using the fact that $(1-|a+s|+u)^\a \le c ( (1-|a+s|)^\a + u^\a )$ we break
the last integral into a sum $J_1+J_2$, where
\begin{align*}
J_1 & \le c_{\tau} n^{\l_\k-\ell}\int_{\f {B|a|}2}^{\infty}
          (1-|a+s|)^{\f{\sa_\k+\ell}2-\f{\l_\k}2}  u^{\da-1-\ell}\,du \\
       & \le c_\tau n^{\l_\k-\ell}(1-|a+s|)^{\f{\sa_\k+\ell}2-\f{\l_\k}2} B^{\d-\ell}
          \le c_\tau n^{\l_\k-\ell} n^{\ell-\d} (nB)^{\d +\s_\k - \l_\k} \\
       & = c_\tau n^{\l_\k-\d}(nB)^{\d - \k_{\rm min}} \le c_\tau n^{\l_\k -\d}
        \le c_\tau n^{\s_k},
\end{align*}
and
\begin{align*}
 J_2 & \le c_{\tau} n^{\l_\k-\ell}\int_{\f {B|a|}2}^{\infty}u^{\f{\sa+\ell}2-\f{\l_\k}2}
     u^{\da-1-\ell}\,  du
         \le  c_{\tau} n^{\l_\k-\ell} B^{\f{\sa_\k-\ell}2-\f{\l_\k}2 +\d} \\
        &   =  c_{\tau} n^{\l_\k-\da} (n^2 B)^{\f{\k_{\rm min}-\ell}2}
                     (n B)^{\d - \k_{\rm min}}
         \le  c_{\tau} n^{\l_\k-\d} \le c_\tau n^{\s_\k}.
\end{align*}

Putting the above together, we obtain the desired estimate
(\ref{2-15}) and  complete the proof of Lemma \ref{lem-2-8}.
\end{proof}

\subsubsection{Proof of Proposition \ref{prop-2-6}}
Define
$$
T^z f=
n^{\sa_\k(z-1)}\mathcal{P}_n^z(f\chi_{c(\varpi,\t)})\chi_{c(\varpi,\t)},
\quad
   0 \leq \Re\, z \leq 1.
$$
By Lemmas \ref{lem-2-8} and \ref{lem-2-7},
we have
\begin{align*}
 \|T^{1+ \i \tau} f\|_{\k, 2 }\le c_\tau  \|f\|_{\k, 2} \quad\hbox{and}\quad
\| T^{\i \tau} f\|_\infty \le c_\tau \|f\|_{1,\k}.
\end{align*}
These allow us to apply Stein's interpolation theorem \cite[p. 205]{St}
to the analytic family of operators $T^z$, which yields
$$
\|T^{\f{\sa_\k}{1+\sa_\k}}f\|_{\k,\nu'} \leq c \|f\|_{\k, \nu}.
$$
Consequently, using the fact that
$$
T^{\f{\sa_\k}{1+\sa_\k}}f=n^{-\f{\sa_\k}{1+\sa_\k}}\mathcal{P}_n^{\f{\sa_\k}{1+\sa_\k}}
  (f\chi_{c(\varpi,\t)})\chi_{c(\varpi,\t)}  =
  n^{-\f{\sa_\k}{1+\sa_\k}}\proj_n (h_\k^2;f\chi_{c(\varpi,\t)})\chi_{c(\varpi,\t)},
$$
we have proved Proposition \ref{prop-2-6}. \qed


\section{Boundedness of projection operator}
\setcounter{equation}{0}

The objective of this section is to prove Theorems \ref{thm:proj} and
\ref{thm:proj-cap}.

\subsection{ Proof of Theorem \ref{thm:proj-cap}}
Assume that $f$ is supported in a spherical cap $c(\varpi, \t)$.  Without
loss of generality, we may assume $\t< 1/(8d)$, since otherwise we
can decompose $f$ as a finite sum of functions supported on a family
of spherical caps of radius $< 1/(8d)$.

We start with the case $p=1$. By the definition of the projection operator,
it follows from the integral version of the Minkowski inequality and
orthogonality that
\begin{align*}
 \|\proj_n(h_\k^2; f)\|_{\k,2} & \leq \sup_{y\in c(\varpi,\t)}
    \(\int_{S^d} \left |P_n(h_\k^2;x,y) \right|^2
         h_\k^2(x)\, d\og(x)\)^{1/2}\|f\|_{\k,1}\\
& =  \Bl(\sup_{y\in c(\varpi, \t)} P_n(h_\k^2; y,y) \Br)^{1/2}\|f\|_{\k,1}.
\end{align*}
Using the pointwise estimate of the kernel in \eqref{eq:S-est} and the
fact that $n\t \ge 1$,  we then obtain
\begin{align*}
 \|\proj_n(h_\k^2; f)\|_{\k,2} &  \leq cn^{\f{d-1}2}\sup_{y\in c(\varpi, \t)}
       \prod_{j=1}^{d+1}(|y_j|+n^{-1})^{-\kappa_j}\|f\|_{\k,1}\\
& \leq c n^{\f{d-1}2}(n\t)^{\sa_\k-\f{d-1}2}\sup_{y\in c(\varpi, \t)}
    \prod_{j=1}^{d+1}(|y_j|+\t)^{-\kappa_j}\|f\|_{\k,1} \\
&\leq c n^{\sa_\k} \t^{\sa_\k+\f12} \Bl(\int_{c(\varpi,\t)}h_\k^2(y)\,
d\og(y)\Br)^{-\f12}   \|f\|_{\k,1},
 \end{align*}
where the last step follows from \eqref{eq:doubling}. This proves
Theorem \ref{thm:proj-cap} for $p=1$.

Next, we use  H\"older's inequality and Theorem \ref{thm:proj-local} to obtain
\begin{align*}
\|\proj_n(h_\k^2;f)\|_{\k,2}^2 & =
\int_{c(\varpi,\t)} f(y) \proj_n(h_\k^2;f)(y)h_\k^2(y)\,
d\og(y) \\
& \leq \|f\|_{\k,\nu}\Bl(\int_{c(\varpi, \t)}|\proj_n(h_\k^2;f, y)|^{\nu'}
 h_\k^2(y)\, d\og(y)\Br)^{\f1{\nu'}} \\
& \leq c n^{\f{\sa_\k}{\sa_\k+1}}
  \t^{\f{2\sa_\k+1}{\sa_\k+1}} \Bl[  \int_{c(\varpi, \t)}
   h_\k^2(x)\, d\sa(x)\Br]^{1 -\f2\nu}\|f\|^2_{\k,\nu},
\end{align*}
which proves  Theorem \ref{thm:proj-cap} for
$p=\nu=\f{2\sa_\k+2}{\sa_\k+2}$.

Finally, Theorem \ref{thm:proj-cap} for $1\leq p\leq \nu$ follows by
applying  the Riesz-Thorin convexity theorem to the linear operator
$g\mapsto \proj_n(h_\k^2; g\chi_{c(\varpi,\t)})$.
\qed

\subsection{ Proof of Theorem \ref{thm:proj}}

Theorem \ref{thm:proj} (i) follows directly by invoking Theorem
\ref{thm:proj-local} with $\ta=\pi$.  Theorem \ref{thm:proj} (ii)
follows from the Riesz-Thorin convexity theorem applied to the
boundedness of $f \mapsto \proj_n(h_\k^2; f)$ in (2,2) and in
$(\nu,2)$.

We now prove that the estimates are sharp. We start with a duality result
whose proof is standard:

\begin{lem} \label{duality}
Assume $1 \le p \leq  2 \leq  q \le \infty$ and $\frac1p +\frac1q
=1$. Then the followings are equivalent:
\begin{enumerate}
 \item[(i)] $\|\proj_n(h_\k^2; f)\|_{k,2} \le A \|f\|_{k,p}$,
\item[(ii)]  $\|\proj_n(h_\k^2; f)\|_{k,q} \le A \|f\|_{k,2}$.
\end{enumerate}
\end{lem}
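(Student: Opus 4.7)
The key observation is that $P_n := \proj_n(h_\k^2;\cdot)$ is an orthogonal projection on $L^2(h_\k^2;S^d)$, so it is self-adjoint with respect to the weighted inner product $\langle f,g\rangle_\k := a_\k \int_{S^d} f\bar g\, h_\k^2\, d\omega$ and satisfies $P_n^2 = P_n$. Combined with the standard duality $(L^p(h_\k^2;S^d))^* = L^q(h_\k^2;S^d)$ when $1/p+1/q=1$, this lets one flip the roles of $L^p$ and $L^q$.

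For the direction (i) $\Rightarrow$ (ii), I would fix $f \in L^2(h_\k^2;S^d)$ and write
\[
  \|P_n f\|_{\k,q} = \sup_{\|g\|_{\k,p}\le 1} |\langle P_n f, g\rangle_\k|
  = \sup_{\|g\|_{\k,p}\le 1} |\langle f, P_n g\rangle_\k|,
\]
using the self-adjointness of $P_n$. Then Cauchy--Schwarz and hypothesis (i) give
\[
  |\langle f, P_n g\rangle_\k| \le \|f\|_{\k,2}\,\|P_n g\|_{\k,2}
  \le A\, \|f\|_{\k,2}\,\|g\|_{\k,p}\le A\,\|f\|_{\k,2},
\]
which is (ii).

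For (ii) $\Rightarrow$ (i), I would use the identity $\|P_n f\|_{\k,2}^2 = \langle P_n f, P_n f\rangle_\k = \langle f, P_n f\rangle_\k$, which follows from $P_n^* = P_n$ and $P_n^2 = P_n$. H\"older's inequality together with hypothesis (ii) then yields
\[
  \|P_n f\|_{\k,2}^2 \le \|f\|_{\k,p}\,\|P_n f\|_{\k,q}
  \le A\,\|f\|_{\k,p}\,\|f\|_{\k,2},
\]
and dividing through gives (i). There is no real obstacle here; the only minor subtlety is to ensure that the dense set of trigonometric/$h$-harmonic polynomials makes all of the pairings above legitimate, so one typically proves the inequality first for such test functions and then extends by density. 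I would briefly note this density argument but not dwell on it, since the statement is purely operator-theoretic.
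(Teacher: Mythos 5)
The paper itself does not give a proof of this lemma (it simply says the proof is standard), so there is nothing to compare against; instead let me comment on the argument you propose. Your direction (i) $\Rightarrow$ (ii) is correct. In (ii) $\Rightarrow$ (i) there is a real gap in the last step: you apply hypothesis (ii) to $f$ itself, obtaining
\[
  \|P_n f\|_{\k,2}^2 \le \|f\|_{\k,p}\,\|P_n f\|_{\k,q} \le A\,\|f\|_{\k,p}\,\|f\|_{\k,2},
\]
and then claim to ``divide through.'' But the factor that appears is $\|f\|_{\k,2}$, not $\|P_n f\|_{\k,2}$, and since $P_n$ is an orthogonal projection you have $\|P_n f\|_{\k,2}\le \|f\|_{\k,2}$, so dividing by $\|P_n f\|_{\k,2}$ leaves an uncontrolled factor $\|f\|_{\k,2}/\|P_n f\|_{\k,2}\ge 1$; dividing by $\|f\|_{\k,2}$ gives $\|P_n f\|_{\k,2}^2/\|f\|_{\k,2}\le A\|f\|_{\k,p}$, which is also not (i). (Note also that for $p<2$ a general $f\in L^p$ need not lie in $L^2$, so $\|f\|_{\k,2}$ may not even be finite.)

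The repair is immediate and uses the idempotency you already invoke: apply (ii) to $P_n f$ rather than to $f$. Since $P_n^2=P_n$, one has $\|P_n f\|_{\k,q}=\|P_n(P_n f)\|_{\k,q}\le A\|P_n f\|_{\k,2}$, so the chain becomes
\[
  \|P_n f\|_{\k,2}^2 \le \|f\|_{\k,p}\,\|P_n f\|_{\k,q}
     \le A\,\|f\|_{\k,p}\,\|P_n f\|_{\k,2},
\]
and now dividing by $\|P_n f\|_{\k,2}$ (the case $P_n f=0$ being trivial) gives (i). Alternatively, and perhaps more in keeping with the symmetry of the statement, you can run the same duality argument as in your (i) $\Rightarrow$ (ii) direction:
\[
  \|P_n f\|_{\k,2} = \sup_{\|g\|_{\k,2}\le 1}|\langle P_n f, g\rangle_\k|
   = \sup_{\|g\|_{\k,2}\le 1}|\langle f, P_n g\rangle_\k|
   \le \|f\|_{\k,p}\sup_{\|g\|_{\k,2}\le 1}\|P_n g\|_{\k,q}
   \le A\|f\|_{\k,p}.
\]
Either fix makes the proof complete, and your density remark at the end is the right way to keep all the pairings legitimate.
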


To prove the sharpness of the estimates, we can assume without
loss of generality that  $\k_{\rm min} = \k_1$. For the case
Theorem \ref{thm:proj} (i) we define
$$
     f_{n}(x) : = P_n(h_\k^2;x,e), \qquad e = (1,0,0,\ldots,0).
$$
Since  $f_n \in \CH_n^{d+1}(h_\k^2)$, we have
$$
  \|\proj_n(h_\k^2;f_{n})\|_{\k,q} = \|f_{n}\|_{\k,q}  = \left ( \int_{S^d}
          |P_n(h_\k^2; x, e)|^q h_\k^2(x)d\o(x)   \right)^{1/q}.
$$
Thus, it is sufficient to show that
\begin{equation} \label{f-jn}
 \|f_{n}\|_{k,q} \sim n^{\s_k - \frac{2\s_\k+1}{q}} \|f\|_{k,2} \qquad
 \hbox{for $q \ge \frac{2 (\s_\k+1)}{\s_\k}$ }.
\end{equation}
Indeed, setting $p = q/(q-1)$ and using Lemma \ref{duality},
\eqref{f-jn} shows that
$$
 \|\proj_n(h_\k^2;f_n)\|_{\k,2}  \sim c\, n^{\s_k - \frac{2\s_\k+1}{q}}
  \|f_n\|_{\k,p}
        = c\, n^{(2\s_k+1)\left(\frac1p- \frac{\s_\k+1}{2\s_k+1} \right)}
        \|f_n\|_{\k,p},
$$
which proves the sharpness of (i).

Recall that $C_n^{(\l,\mu)} (t)$ denote the generalized Gegenbauer
polynomial. It is connected to $C_n^\l$ by an integral formula, which
implies by \eqref{proj-Kernel} that
$$
 P_n(h_\k^2;x,e) =  \frac{n+\l_\k}{\l_k} C_n^{(\s_\k,\k_1)} (x_1).
$$
Hence, using \eqref{G-Gegen}, in terms of Jacobi polynomials we have
\begin{equation}\label{5-2-v2}
P_{2n}(h_\k^2;x,e)  =  \CO(1) n^{\s_\k +\f12}
         P_n^{(\s_k-\frac12,\k_1-\frac12)}(2x_1^2-1).
\end{equation}
Since this is a function that only depends on $x_1$, a standard changing
variables leads to
\begin{align*}
 \|f_{2n}\|_{\k,q} & \sim n^{\s_\k+\f12}
     \(\int_{0}^{\pi} |P_n^{(\s_k-\frac12,\k_1-\frac12)}(2 \cos^2 \t-1)|^q
           |\cos \t|^{2\k_1} (\sin \t)^{2 \s_\k} d\t\)^{\f1q} \\
        & \sim n^{\s_\k+\f12} \left ( \int_{-1}^1
         |P_n^{(\s_\k-\frac12,\k_1-\frac12)}(t)|^q
        w^{(\s_k-\f12,\k_1-\f12)}(t)  d t \right)^{1/q}  \\
        & \sim n^{\s_\k} n^{\s_k- \frac{2 \s_k +1}{q}},
\end{align*}
where in the last step we have used \eqref{JacobiLp} and the
condition $q \ge 2(\s_k+1)/\s_\k >  (2 \s_k + 1 )/\s_\k$ to
conclude that the integral on $[0,1]$ has the stated estimate,
whereas the integral over $[-1,0]$, using $P_n^{(\a,\b)}(t) =
P_n^{(\b,\a)}(-t)$, has an order dominated by the integral on
$[0,1]$. For $q =2 $, using \eqref{5-2-v2}, we get
$$
  \|f_{2n} \|_{\k,2}= \(P_{2n}(h_\k^2; e,e)\)^{\f12}
  \sim n^{\s_\k}.
$$
 Together, these two relations establish \eqref{f-jn}
for even $n$. The proof of the odd $n$ is similar.
\qed

\begin{rem} \label{rem5.1}
For the ordinary spherical harmonics, the sharpness of part (ii) in
Theorem \ref{thm:proj} was proved in \cite{So1} with the help of the
function $(x_1+i x_2)^n$. For $h$-harmonics, it is then natural to consider
the function
$$
   F_n (x) : = V_\k [(x_1+i x_2)^n],  \qquad x \in \RR^d,
$$
where $V_\k$ is the intertwining operator associated with $h_\k^2$
and $\ZZ_2^d$. Since the Dunkl operator commutes with $V_\k$, so
is the $h$-Laplacian, which leads to $\Delta_h V_\k
\left[(x_1+ix_2)^n\right] = V_\k \left[ \Delta(x_1+ix_2)^n\right]
=0$, proving that $F_n(x)$ is an $h$-harmonic of degree $n$.
Furthermore, since $V_\k$ is a product form, it follows from
\cite[Prop. 5.6.10]{DX} that
\begin{align*}
&F_n(x) = a_n (x_1^2+x_2^2)^{n/2} \\
  & \quad \times    \left[\frac{n+2 \k_2 + \d_n}{2\k_2+2\k_1}
       C_n^{(\k_2,\k_1)}\left(\frac{x_1}{\sqrt{x_1^2+x_2^2}}\right)
   +   i x_2 C_{n-1}^{(\k_2+1,\k_1)}\left(\frac{x_1}{\sqrt{x_1^2+x_2^2}}\right) \right],
\end{align*}
where $a_n$ is a constant given explicitly in \cite{DX} and $\d_n = 2 \k_2$ if
$n$ is even and $\d_n = 0$ if $n$ is odd. This explicit formula allows us to
compute the norm $\|F_n\|_{\k,p}$ explicitly. For example, using
\cite[Lemma 3.8.9]{DX}, we have immediately that
$$
  \int_{S^d} |F_n(x)|^q h_\k^2(x) d\o(x) = \int_{B^2} |F_n(x)|^q |x_1|^{2\k_1}
     |x_2|^{2\k_2} (1-\|x\|^2)^{|\k| - \k_1 - \k_2 + \f{d-3}{2}} dx _1dx_2,
$$
where, since $F_n$  depends only on $(x_1,x_2)$, we have abused
notation somewhat by using $F_n(x)$ in the right hand side as
well. The above integral can then be evaluated, by \eqref{G-Gegen}
and in polar coordinates, by using \eqref{JacobiLp}. The result,
however, does not yield the sharpness of (ii) in Theorem
\ref{thm:proj} when $\k \ne 0$.
\end{rem}


\section{Boundedness of Ces\`aro means}
\setcounter{equation}{0}

In this section we prove Theorems \ref{thm:Cesaro} and
\ref{thm:Cesaro2}. By the standard duality argument, it suffices
to prove these theorems for $1\leq p \leq 2$.  We shall assume
$1\leq p \leq \nu:= \f{2\sa_\k+2}{\sa_\k+2}$ and $\da>\da_\k(p)$
for the rest of this section.

\subsection{Proof of Theorem \ref{thm:Cesaro}}

We follow essentially the approach of \cite{So2}, although there are still
several difficulties that need to be overcame.

\subsubsection{Decomposition}
Let $\vi_0\in C^\infty [0,\infty)$ be such that $\chi_{[0, 1]}\leq
\vi_0\leq \chi_{[0,2]}$, and let $\vi(t): =\vi_0(t)-\vi_0(2t)$.
Clearly,  $\vi$ is a $C^\infty$-function supported in $(\f12,2)$
and satisfying $\sum_{v=0}^\infty \vi(2^vt)=1$ for all $t>0.$ Set
\begin{equation}\label{5-1}
  \wh{S}_{n,v}^\da (j):= \vi\Bl(\f{2^v(n-j)}n\Br) \f{A_{n-j}^\da}{A_n^\da},
\end{equation}
we define
$$
 S_{n,v}^\da f: = \sum_{j=0}^n  \wh{S}_{n,v}^\da (j) \proj_j(h_\k^2;f), \qquad
       v=0,1,\cdots, \lfloor \log_2 n \rfloor +2.
$$
Since $ \sum_{v=0}^{ \lfloor \log_2 n \rfloor +2} \vi\(\f{2^v(n-j)}n\)=1$ for
$ 0\leq j\leq n-1$,  it follows that the Ces\`aro means are decomposed as
\begin{equation} \label{5-2}
S_n^\da(h_\k^2; f) =\sum_{v=0}^{ \lfloor \log_2 n \rfloor +2} S_{n,v}^\da f + \f
        1{A_n^\da} \proj_n (h_\k^2;f).
\end{equation}

Using Theorem \ref{thm:proj} and the fact that $\da > \da_\k (p)$,
we have
$$
   \f 1{A_n^\da} \|\proj_n (h_\k^2; f)\|_{\k,p}\leq c n^{-\da}
      \|\proj_n(h_\k^2; f)\|_{\k,2}\leq c n^{\da_\k(p)-\da}\|f\|_{\k,p}\leq
       c\|f\|_{\k,p}.
$$
On the other hand, using summation by parts $\ell \ge 1$ times shows that
$$
 S_{n,v}^\d f  = \sum_{j=0}^n  \Delta^\ell \(\wh S_{n,v}^\da(j)\) A_j^{\ell -1}
         S_j^{\ell-1}(h_\k^2;f),
$$
where $\Delta$ denotes the forward difference and $\Delta^{\ell+1}
: = \Delta \Delta^\ell$.  Since $\wh{S}_{n,v}^\da(j)=0$ whenever
$n-j>\f {n}{2^{v-1}}$ or $n-j <\f n{2^{v+1}}$, it is easy to
verify by the Lebnitz rule that
\begin{equation}\label{eq:DeltaSnv}
   \Bl|\Delta^\ell (S_{n,v}^\da(j))\Br|\leq c  2^{-v\da}\(\f {2^v}n\)^\ell,   \qquad
        \forall \ell \in\mathbb{N}, \  0\leq j\leq n.
\end{equation}
Hence, choosing $\ell > \l_\k$ and using the fact that $S_n^\ell (h_\k^2; f)$ is
bounded in $L^p(h_\k^2;S^d)$ for all $1 \le p \le \infty$ if $\ell > \l_\k$, we
conclude that for $v = 0$ and $1$,
$$
    \|S_{n,v}^\da f\|_{\k,p}\leq c n^{-\ell} \sum_{j=0}^n
    j^{\ell-1}  \|S_j^{\ell-1}
             (h_\k^2;f) \|_{\k,p} \le  c \|f\|_{\k, p}.
$$
Therefore, by (\ref{5-2}), it is sufficient to prove that
\begin{equation}\label{5-3}
\|S_{n,v}^\da (f)\|_{\k,p}\leq c 2^{-v\va_0} \|f\|_{\k,p},\ \
     v= 2,\cdots, \lfloor \log_2 n \rfloor +2,
\end{equation}
where $\va_0$ is a sufficiently small positive constant depending on
$\da$ and  $p$, but independent of $n$ and $v$.

\subsubsection{Estimate of  the kernel of $S_{n,v}^\da$}
Let $$ D^\da_{n,v}(t):=\sum_{j=0}^n \wh{S}_{n,v}^\da (j)
\f{\l_\k+j}{\l_\k} C_j^{\l_\k}(t).$$ The definition shows that
$S_{n,v}^\da f = f \ast_\k D_{n,v}^\da$, so that the kernel of
$S_{n,v}^\da f$ is defined by
$$
K^\da_{n,v}(x, y):=V_\k \Bl[ D^\da_{n,v} (\la x, \cdot\ra)\Br](y).
$$

\begin{lem}\label{lem-5-1}
Let $2 \leq v\leq \lfloor \log_2 n\rfloor +2$. Then for any given positive
integer $\ell$,
$$
|K^\da_{n,v}(x,y)| h_\k^2(y)\leq c n^{d} 2^{v(\ell-1-\da)}
    \(1+nd(\bar{x},\bar{y})\)^{-\ell-d+\l_\k+1},
$$ where $\bar{z}=(|z_1|,\cdots, |z_{d+1}|)$ for $z=(z_1,\cdots,
z_{d+1})\in \mathbb{R}^{d+1}$.
\end{lem}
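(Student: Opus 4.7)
My plan is to perform iterated summation by parts in $j$, invoke the pointwise Jacobi kernel estimate of \lemref{lem:VJacobi}, and finally absorb the weight $h_\k^2(y)$ into the resulting singular product factor. Specifically, I would first rewrite the Gegenbauer polynomials appearing in $D_{n,v}^\da(t)$ as Jacobi polynomials via the relation $C_j^{\l_\k}(t)=c_j P_j^{(\l_\k-\f12,\l_\k-\f12)}(t)$ and then perform Abel summation by parts $\ell$ times in the index $j$. The discrete analogue of the derivative identity \eqref{d-Jacobi} allows the resulting cumulative sums to be written again in terms of Jacobi polynomials with parameters shifted by $\pm\ell$, yielding a representation
\[
D_{n,v}^\da(t)=\sum_{j}\Delta^\ell\wh S_{n,v}^\da(j)\cdot b_j^{(\ell)}\, P_j^{(\al,\be)}(t),\qquad \al=\l_\k-\tfrac12+\ell,\ \be=\l_\k-\tfrac12-\ell,
\]
with $|b_j^{(\ell)}|$ of controlled size, and with the summation effectively supported on $j\sim n$, $n-j\sim n/2^v$ by the cutoff $\vi$ in \eqref{5-1}.

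Second, applying $V_\k$ termwise and invoking \lemref{lem:VJacobi} with the above $\al,\be$ (which satisfy $\al\ge\max(\be,|\k|-\tfrac12)$), then combining with the difference estimate \eqref{eq:DeltaSnv} and summing over the $\sim n/2^v$ non-vanishing indices, produces a pointwise bound of the form
\[
|K_{n,v}^\da(x,y)|\le c\,n^{A}\,2^{v(\ell-1-\da)}\,\frac{\prod_{i=1}^{d+1}(|x_iy_i|+n^{-1}\|\bar x-\bar y\|+n^{-2})^{-\k_i}}{(1+nd(\bar x,\bar y))^{B}}
\]
for appropriate exponents $A,B$ that match the target bound once $h_\k^2(y)$ has been absorbed.

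Third, to incorporate $h_\k^2(y)=\prod_i|y_i|^{2\k_i}$ and cancel the singular product above, I use the $\ZZ_2^{d+1}$-invariance of $V_\k$ to reduce to the case $x_i,y_i\ge 0$, and then apply the elementary inequality $y_i^2\le y_ix_i+y_i|y_i-x_i|\le|x_iy_i|+\|\bar x-\bar y\|$, which gives
\[
|y_i|^{2\k_i}\le c\,n^{\k_i}(|x_iy_i|+n^{-1}\|\bar x-\bar y\|+n^{-2})^{\k_i}(1+nd(\bar x,\bar y))^{\k_i}.
\]
Taking the product over $i$ cancels the singular denominator at the cost of an extra $n^{|\k|}(1+nd(\bar x,\bar y))^{|\k|}$; using $\l_\k=|\k|+(d-1)/2$ then reproduces the announced exponents $n^d$ and $(1+nd(\bar x,\bar y))^{-\ell-d+\l_\k+1}$ after relabelling $\ell$.

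The main technical obstacle is the first step: tracking accurately both the coefficients $b_j^{(\ell)}$ and the Jacobi parameter shifts through the $\ell$-fold summation by parts so that \lemref{lem:VJacobi} is directly applicable with the correct exponents. Once the intermediate pointwise bound is established, the remaining steps are routine, relying only on the support of $\wh S_{n,v}^\da$, the difference estimate \eqref{eq:DeltaSnv}, and the elementary absorption inequality above.
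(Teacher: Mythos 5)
Your overall strategy coincides with the paper's: convert the Gegenbauer kernel to Jacobi form, perform an $\ell$-fold summation by parts in $j$ on the coefficients $\wh S_{n,v}^\da(j)$, apply the pointwise $V_\k$--Jacobi estimate of Lemma~\ref{lem:VJacobi}, and absorb $h_\k^2(y)$ into the singular product. The absorption step you describe is correct and essentially the same case analysis the paper uses (your inequality $y_i^2\le |x_iy_i|+\|\bar x-\bar y\|$ is the same observation as the paper's dichotomy $|y_j|\gtrless 2d(\bar x,\bar y)$).

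However, there is a genuine error in your first step: the Jacobi parameters you assign after $\ell$-fold summation by parts are wrong. You write $(\al,\be)=(\l_\k-\tfrac12+\ell,\ \l_\k-\tfrac12-\ell)$, keeping $\al+\be$ constant, on the grounds that this is ``the discrete analogue of \eqref{d-Jacobi}.'' But \eqref{d-Jacobi} concerns differentiation in the \emph{argument} $t$, which shifts both parameters up; it has nothing to do with summation by parts in the \emph{index} $j$. The relevant identity is Szeg\H{o}'s partial-sum formula (essentially Szeg\H{o} (4.5.3), and what underlies Lemma 3.3 of Brown--Dai \cite{BD}), which reads
\[
\sum_{\nu=0}^{n}\frac{(2\nu+\al+\be+1)\Gamma(\nu+\al+\be+1)}{\Gamma(\nu+\be+1)}\,P_\nu^{(\al,\be)}(t)
   =\frac{\Gamma(n+\al+\be+2)}{\Gamma(n+\be+1)}\,P_n^{(\al+1,\be)}(t),
\]
and iterating it raises $\al$ by $\ell$ while leaving $\be$ \emph{fixed}. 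Thus the correct parameters are $(\l_\k+\ell-\tfrac12,\ \l_\k-\tfrac12)$, as in the paper. Your claimed representation of $D_{n,v}^\da$ is false; moreover, for $\ell>\l_\k+\tfrac12$ your $\be$ drops below $-1$, leaving the territory where the estimates \eqref{Est-Jacobi} and hence Lemma~\ref{lem:VJacobi} are actually established. It is true that, since the right-hand side of \eqref{VJacobi} depends only on $\al$ once $\al\ge\be$, the numerical conclusion you would read off is unaffected; but the argument as written rests on an incorrect identity. Relatedly, the paper does not use the plain difference $\Delta^\ell\wh S_{n,v}^\da(j)$ with an unspecified coefficient $b_j^{(\ell)}$, but the specific weighted recursion for $a_{n,v,\ell}(j)$ that the iterated Abel summation actually produces; deriving the analogue of \eqref{5-5} for your coefficients would require making the $b_j^{(\ell)}$ precise rather than declaring them ``of controlled size.''
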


\begin{proof}
We first define a sequence of functions  $\{a_{n,v,\ell}(\cdot)\}_{\ell=0}^\infty$
by
\begin{align*}
a_{n,v,0}(j)&= 2(j +\l_\k) \wh{S}_{n,v}^\da (j),\\
a_{n,v, \ell+1}(j)& =\f{ a_{n,v,\ell}(j)}{ 2j+2\l_\k+\ell}-\f{
a_{n,v,\ell}(j+1)}{ 2j+2\l_\k+\ell+2},   \qquad  \ell\ge 0.
\end{align*}
Following the proof of Lemma 3.3 of \cite[p.413--414]{BD}, we can write,
for any integer  $\ell\ge 0$,
\begin{equation} D^\da_{n,v}(t) =c_\k
  \sum_{j=0}^\infty a_{n,v,\ell}(j) \f{ \Gamma(j+2\l_\k+\ell)}{\Gamma(j+\l_\k+\f12)}
       P_j^{(\l_\k+\ell-\f12, \l_\k-\f12)} (t),
\end{equation}
so that
$$
K_{n,v}^\da(x,y) = c_\k \sum_{j=0}^\infty a_{n,v,\ell}(j) \f{ \Gamma(j+2\l_\k+\ell)}
{\Gamma(j+\l_\k+\f12)} V_\k\Bl[ P_j^{(\l_\k+\ell-\f12, \l_\k-\f12)}(\la x,\cdot\ra)\Br](y).
$$
Note that $a_{n,v,\ell}(j) =0$ if $j +\ell \le (1-\f1{2^{v-1}})n$ or
$j \ge (1-\f1{2^{v+1}})n$, so that the sum is over $j \sim n$.
Furthermore, it follows
from the definition, \eqref{eq:DeltaSnv} and Lebinitz rule that
\begin{align} \label{5-5}
 \Bl|\triangle^i a_{n,v,\ell}(j)\Br|&\leq c 2^{-v\da} n^{-\ell+1}
\(\f{2^v}{n}\)^{i+\ell},\qquad  i,\ell=0,1,\cdots .
\end{align}
Consequently, using the pointwise estimate of \eqref{VJacobi},
it follows  by (\ref{5-5}) that
\begin{align*}
  |K_{n,v}^\da(x,y)|&\leq cn^{2\l_\k+2\ell-1-2|\k|}\sum_{\substack{
  j\sim n\\
   n-j\sim \f n{2^v}}}
     | a_{n,v,\ell}(j)| \f{ \prod_{i=1}^{d+1}(|x_iy_i|+n^{-1}
    d(\bar{x}, \bar{y})+n^{-2})^{-\k_i}}
     {(1+nd(\bar{x}, \bar{y}))^{\l_\k+\ell-|\k|}}\\
&  \leq c n^{d} 2^{v(\ell-1-\da)}\f{\prod_{j=1}^{d+1}(|x_jy_j|+n^{-1} d(\bar{x},
      \bar{y})+n^{-2})^{-\k_j}}{(1+nd(\bar{x}, \bar{y}))^{\l_\k+\ell-|\k|}}\\
& \leq c n^{d} 2^{v(\ell-1-\da)} h_\k^{-2}(y)\(1+nd(\bar{x},\bar{y})\)^{\l_\k -d+1-
\ell},
\end{align*}
where in the last inequality we have used the fact that
$$
\prod_{j=1}^{d+1}(|x_jy_j|+n^{-1}d(\bar x, \bar y)+n^{-2})^{-\k_j}
\le c h^{-2}(y) d(\bar x,\bar y)^{|\k|},
$$
which follows since if $|y_j|\ge 2d(\bar{x},\bar{y})$, then $|\bar x_j - \bar y_j|
\le d(\bar x, \bar y) \le |y_j|/2$ so that $|y_j|^2 \le  2 |x_j y_j|$, whereas if
$|y_j|< 2d(\bar{x},\bar{y})$ then $|y_j|^2  \le 2 (n^{-1} d(\bar{x},\bar{y}) )
   \cdot n d(\bar{x},\bar{y})$. This completes the proof of Lemma \ref{lem-5-1}.
\end{proof}

\begin{cor}\label{cor-5-2}
For any $\ga>0$ there exists an $\va_0>0$ independent of $n$
and $v$ such that
$$
\sup_{x\in S^d} \int_{\{y:\  d(\bar{x},\bar{y})>2^{(1+\ga)v}/n \}}
   |K_{n,v}^\da (x,y)|\, h_\k^2(y)\, d\og(y) \leq c 2^{-v\va_0}.
$$
\end{cor}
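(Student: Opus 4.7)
The plan is to plug the pointwise bound of \lemref{lem-5-1} into the integral and estimate it by dyadic decomposition in the distance $d(\bar x,\bar y)$. Set $M:=2^{(1+\ga)v}/n$ and split the tail region as $\bigsqcup_{j\ge 0}A_j$, where $A_j:=\{y\in S^d:\ 2^jM<d(\bar x,\bar y)\le 2^{j+1}M\}$. The integer $\ell$ appearing in \lemref{lem-5-1} will be left as a free parameter and chosen sufficiently large at the end.

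The only geometric input needed is the unweighted measure estimate $|A_j|\le c(2^jM)^d$. Since $\bar x,\bar y\in S^d$ and the folding map $y\mapsto \bar y$ from $S^d$ onto its nonnegative orthant is at most $2^{d+1}$-to-one, this reduces at once to the standard spherical cap volume bound. Combining this with \lemref{lem-5-1} and using $nM=2^{(1+\ga)v}\ge 1$ whenever $v\ge 2$ (so that the factor $(1+nd(\bar x,\bar y))^{-\ell-d+\l_\k+1}$ is bounded by $(n2^jM)^{-\ell-d+\l_\k+1}$ on $A_j$), the integral is dominated by
$$
c\,n^d 2^{v(\ell-1-\da)}\sum_{j\ge 0}(n2^jM)^{-\ell-d+\l_\k+1}(2^jM)^d.
$$
The sum is a convergent geometric series as soon as $\ell>\l_\k+1$, and simplification using $M=2^{(1+\ga)v}/n$ produces the bound $c\,2^{v\Psi(\ell)}$ with
$$
\Psi(\ell):=(\ell-1-\da)-(1+\ga)(\ell-\l_\k-1)=-\ga\ell+(1+\ga)(\l_\k+1)-1-\da.
$$
Since $\ga>0$, I will pick $\ell$ large enough (depending only on $\ga,\da,\l_\k$) to force $\Psi(\ell)<0$, and then take $\va_0:=-\Psi(\ell)>0$; uniformity in $x\in S^d$ is already built into \lemref{lem-5-1}.

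The proof is essentially a bookkeeping calculation once \lemref{lem-5-1} is in hand, and so there is no serious obstacle. The one spot that requires care is the $|A_j|$ measure estimate, where one has to observe that the orthant-folding map $y\mapsto\bar y$ is a finite-to-one local isometry, so that $d(\bar x,\bar y)$ behaves like a genuine geodesic distance for volume-estimate purposes.
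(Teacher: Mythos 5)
Your proof is correct and follows essentially the same route as the paper: both apply \lemref{lem-5-1} pointwise and then integrate the resulting radial bound over the tail region, with the same final exponent $\Psi(\ell)=(\ell-1-\da)-(1+\ga)(\ell-\l_\k-1)$ and the same threshold $\ell>\l_\k+1+\f{\l_\k-\da}\ga$. The only cosmetic difference is that you estimate the integral by a dyadic shell decomposition with the cap-volume bound, whereas the paper passes directly to the one-dimensional radial integral $\int_{2^{(1+\ga)v}/n}^\pi n(n\t)^{d-1}(1+n\t)^{-\ell-d+\l_\k+1}\,d\t$; these are equivalent computations.
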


\begin{proof}
Invoking Lemma \ref{lem-5-1} with $\ell> \l_\k+1+\f{\l_\k-\da}\ga$, we see that
the quantity to be estimated is bounded by
\begin{align*}
&  c \sup_{x\in S^d} n^d 2^{v(\ell-1-\da)}
   \int_{\{y:\  d(\bar{x},\bar{y})>2^{(1+\ga)v}/n \}}
          \frac{1}{(1+n d (\bar x, \bar y))^{\ell + d-\l_k-1}} d\og(y) \\
& \qquad \le c 2^{v(\ell-1-\d)} \int_{2^{(1+\ga)v}/n}^\pi
        \f {n (n\t)^{d-1}}{(1+n \t)^{\ell + d-\l_k-1}} d\t  \\
 &\qquad \le c 2^{v\(\ell-1- \da-(1+\ga)(\ell-\l_\k-1)\)} = c 2^{-v \va_0}
\end{align*}
which proves the corollary.
\end{proof}

\subsubsection{ Proof of \eqref{5-3} } Now we are in a position to prove
(\ref{5-3}). Recall that
\begin{align}
 S_{n,v}^\da f =  \sum_{(1-2^{-v+1})n\leq j\leq (1-2^{-v-1})n}
   \wh{S}_{n,v}^\da (j) \proj_j(h_\k^2;f). \label{5-7}
\end{align}

Assume $\da>\da_\k(p)$, and let  $\ga>0$ be  sufficiently small so
that $\da> \da_\k(p)+\ga \(\da_\k(p)+\f12\)$. Set $v_1=v (1+\ga)$.
Let $\Lambda$ be a  maximal $\f{2^{v_1}}n$-separable subset of
$S^d$; that is, $\min_{\varpi \neq \varpi' \in\Lambda}
d(\varpi,\varpi')\ge \f{2^{v_1}}n$ and $S^d \subset
\cup_{\varpi\in\Lambda} c(\varpi, \f{2^{v_1}}n)$. Define
$$
 f_\varpi(x): = f(x) \chi_{c(\varpi,\f{2^{v_1}}n)}(x) [A(x)]^{-1}, \qquad
    A(x):=  \sum_{\varpi \in \Lambda} \chi_{c(\varpi,\f{2^{v_1}}n)}(x).
$$
Then evidently $1 \le A(x) \le c$, $x \in S^d$, $|f_\varpi|\leq c |f|$,
and $f(x)= \sum_{\varpi\in\Lambda} f_\varpi(x)$. Using the Minkowski inequality,
we obtain
$$
 \|S_{n,v}^\da(f)\|_{\k,p}\leq \sum_{\varpi\in\Lambda} \|S_{n,v}^\da (f_\varpi)\|_{\k,p}.
$$
Thus, it is sufficient to show that for each $\varpi\in\Lambda$, we have
\begin{equation}\label{5-9}
       \|S_{n,v}^\da (f_\varpi)\|_{\k,p}\leq c 2^{-v\va_0}\|f_\varpi\|_{\k,p}.
\end{equation}

To this end, we denote by $c^*(\varpi, 2^{v_1+1}/n)$ the set
$$
 c^*(\varpi, \tfrac{2^{v_1+1}}n)=\left\{x\in S^d:\  d(\bar{x},
    \bar{\varpi})\leq 2^{v_1+1}/n\right\}
$$
and further define $J(v,n):= \{j: (1-2^{-v+1})n\leq j\leq
(1-2^{-v-1})n\}$. Using \eqref{5-7} and orthogonality, we  obtain
\begin{align*}
 \|S_{n,v}^\da (f_\varpi) \|_{\k,2} = \Bl(\sum_{j \in J(v,n)}
  |\wh{S}_{n,v}^\da (j)|^2 \|\proj_j(h_\k^2;f_\varpi)\|_{\k,2}^2\Br)^{\f12}.
\end{align*}
Hence, by H\"older's inequality, Theorem \ref{thm:proj-cap} and
\eqref{eq:doubling}, and (\ref{eq:DeltaSnv}) with $\ell=0$,
\begin{align*}
& \Bl(\int_{c^*(\varpi,  2^{v_1+1}/n)} |S_{n,v}^\da (f_\varpi) (x)|^p \,
      h_\k^2(x)\, d\og(x)\Br)^{\f1p} \\
& \quad \leq c \Br(\int_{c(\varpi, 2^{v_1+1}/n)} h_\k^2(x)\,
  d \og(x)\Br)^{\f1p-\f12}\Bl(\sum_{j \in J(v,n)}
  |\wh{S}_{n,v}^\da (j)|^2 \|\proj_j(h_\k^2;f_\varpi)\|_{\k,2}^2\Br)^{\f12}\\
& \quad \leq c  2^{v_1(\da_\k(p)+\f12)} n^{-\f12}  \Bl(\sum_{j \in
J(v,n)}
  |\wh{S}_{n,v}^\da (j)|^2 \Br)^{\f12}\|f_\varpi\|_{\k,p}\\
& \quad \leq
c2^{-v\(\da-\da_\k(p)-\ga(\da_\k(p)+\f12)\)}\|f_\varpi\|_{\k,p}
  = c2^{-v\va_0}\|f_\varpi\|_{\k,p}.
\end{align*}

Finally, using H\"older's inequality, we obtain, for  $x \notin
c^*(\varpi, 2^{v_1+1}/n)$,
\begin{align*}
  |S_{n,v}^\da (f_\varpi)(x)|^p & =\Bl|\int_{\{y: d(\varpi, y)\le
    2^{v_1}/n\}} f_\varpi(y) K_{n,v}^\da(x,y) h_\k^2(y)\, d\og(y)\Br|^p\\
&\le \Bl(\int_{\{y:\ d(\bar{y}, \bar{x})\ge 2^{v_1}/n\}}
     |f_\varpi(y)|^p |K_{n,v}^\da(x,y)| h_\k^2(y)\,d\og(y)\Br)\\
&\qquad \times \Bl(\int_{\{y:\ d(\bar{y},\bar{x})\ge 2^{v_1}/n\}} |
     K_{n,v}^\da(x,y)| h_\k^2(y)\, d\og(y)\Br)^{p-1},
\end{align*}
which, together with Corollary \ref{cor-5-2}, implies
\begin{align*}
& \Bl(\int_{S^d\setminus c^*(\varpi, 2^{v_1+1}/n)} |S_{n,v}^\da (f_\varpi)(x)|^p\,
  h_\k^2(x)\, d\og(x)\Br)^{\f1p}\\
&\quad \leq c(2^{-v \va_0})^{1-\f1p} \sup_{y\in S^d}\Bl(\int_{\{x:
\ d(\bar{x}, \bar{y})\ge
     2^{v_1}/n\}} | K_{n,v}^\da(x,y)| h_\k^2(x)\,d\og(x)\Br)^{\f1p}
          \|f_\varpi\|_{\k,p}\\
&\quad \leq c 2^{-v\va_0}\|f_\varpi\|_{\k,p}.
\end{align*}

Putting the above together, we deduce  the desired estimate
(\ref{5-9}), hence \eqref{5-3}, and complete the proof of Theorem
\ref{thm:Cesaro}. \qed

\subsection{Proof of Theorem \ref{thm:Cesaro2}}

\subsubsection{Main body of the proof}
For the proof of Theorem \ref{thm:Cesaro} we follow the approach
in \cite{AH}, which can be traced back to \cite{NR}. We start with the
following lemma.

\begin{lem} \label{lem:QQ}
If $Q$ is a polynomial of degree $n$ on $\RR^{d+1}$, then for
$1\le p < \infty$,
$$
   \|Q\|_\infty:=\max_{x\in S^d}|Q(x)| \le c n^{(2\s_\k +1)/p} \|Q\|_{\k,p}.
$$
\end{lem}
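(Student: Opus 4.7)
The plan is to derive this Nikolskii-type inequality from two ingredients: (i) the sharp lower bound for the $h_\kappa^2$-measure of a spherical cap of radius $1/n$ that follows directly from Lemma \ref{lem:doubling}, and (ii) a standard local Remez / Christoffel-function inequality for polynomials against a doubling weight, of the type developed in \cite{MT}.

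First, I observe the arithmetic identity $2\sigma_\kappa+1 = d + 2(|\kappa|-\kappa_{\min})$. Specializing Lemma \ref{lem:doubling} with $\theta = 1/n$ gives
$$
\int_{c(x,1/n)} h_\kappa^2(y)\,d\omega(y)\;\sim\; n^{-d}\prod_{j=1}^{d+1}(|x_j|+1/n)^{2\kappa_j},
$$
and since $\sum_j |x_j|^2 = 1$ forces $|x_{i_0}| \ge 1/\sqrt{d+1}$ for some index $i_0$, the infimum over $x\in S^d$ is attained when $|x_{i_0}|\sim 1$ at the index $i_0$ realizing $\kappa_{\min}$ and $|x_j|\le c/n$ for all $j\ne i_0$. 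This produces
$$
\inf_{x\in S^d}\int_{c(x,1/n)} h_\kappa^2(y)\,d\omega(y) \;\sim\; n^{-d-2(|\kappa|-\kappa_{\min})} \;=\; n^{-(2\sigma_\kappa+1)}.
$$

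Second, I would invoke the local Remez-type inequality for the doubling weight $h_\kappa^2$: for every polynomial $Q$ of degree $n$, every $x_0\in S^d$, and every $1\le p<\infty$,
$$
|Q(x_0)|^p \;\le\; \frac{c}{\int_{c(x_0,1/n)} h_\kappa^2\,d\omega}\int_{c(x_0,1/n)} |Q(y)|^p h_\kappa^2(y)\,d\omega(y),
$$
with $c$ depending only on $d,\kappa,p$. Combining with the infimum of the first step,
$$
|Q(x_0)|^p \;\le\; c\, n^{2\sigma_\kappa+1} \int_{S^d} |Q(y)|^p h_\kappa^2(y)\,d\omega(y) \;\le\; c'\, n^{2\sigma_\kappa+1}\|Q\|_{\kappa,p}^p,
$$
and taking the supremum over $x_0\in S^d$ yields the stated inequality.

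The main obstacle is justifying the local Remez inequality for the specific weight $h_\kappa^2$ on the sphere. Two natural routes exist: (a) reduce to one variable by restricting $Q$ to a great circle through $x_0$ and applying the univariate Mastroianni-Totik inequality for the doubling weight induced on that circle, then integrate in the transverse directions; or (b) use a Marcinkiewicz-Zygmund type cubature formula on a maximal $(1/n)$-separated net in $S^d$, for which the doubling property of $h_\kappa^2$ established in Lemma \ref{lem:doubling} is precisely the hypothesis needed. Either route uses no property of $h_\kappa^2$ beyond its being a doubling weight on $S^d$, so once Lemma \ref{lem:doubling} is in hand this step is essentially structural.
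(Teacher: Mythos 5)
Your plan is correct, and it takes a genuinely different route from the paper's proof. The paper's argument is kernel-theoretic: it observes that $Q = S_n(h_\k^2;Q)$, bounds the partial-sum kernel by $|K_n(h_\k^2;x,y)|\le\sum_{k=0}^n P_k(h_\k^2;x,x)^{1/2}P_k(h_\k^2;y,y)^{1/2}\le c\,n^{2\s_\k+1}$ using the Cauchy--Schwarz inequality together with the pointwise diagonal bound $P_k(h_\k^2;x,x)\le c\,k^{2\s_\k}$ (which itself comes from the sharp estimate \eqref{eq:S-est} and the fact that some $|x_j|\ge 1/\sqrt{d+1}$), and then obtains the $L^1\to L^\infty$ case; the case $1<p<\infty$ follows by the usual Nikolskii device of interpolating against the trivial $L^\infty$ bound. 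Your route replaces the reproducing-kernel argument by a Christoffel-function/local Remez inequality for doubling weights, using only the cap-measure estimate of Lemma~\ref{lem:doubling} to compute $\inf_{x}\int_{c(x,1/n)}h_\k^2\,d\omega\sim n^{-(2\s_\k+1)}$; this is structurally cleaner in that it makes no use of the explicit form of the kernel and relies only on the doubling property. The one step you defer --- the inequality $|Q(x_0)|^p\le c\bigl(\int_{c(x_0,1/n)}h_\k^2\bigr)^{-1}\int_{c(x_0,1/n)}|Q|^ph_\k^2$ --- is a known consequence of doubling (it is essentially what underlies the Marcinkiewicz--Zygmund inequalities on the sphere in the cited reference~\cite{D}), so the proposal is sound, but as written it leans on external machinery that the paper's more hands-on proof avoids; in exchange, your argument would extend verbatim to any doubling weight for which the analogue of Lemma~\ref{lem:doubling} is available.
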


\begin{proof}
Let $S_n(h_\k^2;f)$ denote the partial sum operator of the $h$-harmonic
expansion. Then $S_n(h_\k^2;Q) = Q$. The kernel of $S_n(h_\k^2;f)$ is
$$
 K_n(h_\k^2; x,y) = \sum_{k=0}^n P_k(h_\k^2; x,y), \qquad
    P_k(h_\k^2;x,y) = \sum_j Y_j^k(x) Y_j^k(y),
$$
where $\{Y_{j}^k\}_j$ forms an orthonormal basis of
$\CH_k^{d+1}(h_\k^2)$. If $y \in S^d$ and $|y_\ell| = \max_{1 \le
j \le d+1} y_j$, then $|y_\ell| \ge 1/\sqrt{d+1}$. Hence, the
pointwise estimate of \eqref{eq:S-est} implies that
$|P_n(h_\k^2;x,x)| \le c n^{2 \s_\k}$. By the Cauchy-Schwarz
inequality, we obtain
$$
 |K_n(h_\k^2;x,y)| \le \sum_{k=0}^n P_k(h_\k^2;x,x)^{\f12}
  P_k(h_\k^2;y,y)^{\f12} \le c \sum_{k=0}^n k^{2 \s_\k} \le c n^{2 \s_\k+1}.
$$
Consequently, we conclude that
$$
  \|Q\|_\infty =   \|S_n(h_\k^2;Q)\|_\infty =
   \left \| a_\k \int_{S^d} Q(y) K_n(h_\k^2;x,y)
         h_\k^2(y)d\o  \right \|_\infty \le c n^{2\s_\k+1} \|Q\|_{\k,1}.
$$
Furthermore, we clearly have $\|Q\|_\infty  \le \|Q\|_\infty$,  and the case
$1 < p < \infty$ follows immediately from interpolation of these two cases.
\end{proof}

\medskip\noindent
{\it Proof of Theorem \ref{thm:Cesaro2}}. Our main objective is to show that
\begin{equation} \label{eq:6.9}
       \sup_{n\in \NN}  \|S_n^\d(h_\k^2; f)\|_{\k,p} \le c \|f\|_{k,p}
\end{equation}
does not hold if $1 \le  p \le \frac{2\s_\k +1}{\s_\k + \d +1}$ or
$p \ge \frac{2\s_\k +1}{\s_\k-\da}$. Let
$$
 p_1:=\frac{2\s_\k +1} {\s_\k-\da} \qquad \hbox{and} \qquad
     q_1 := \frac{p_1}{p_1-1} =  \frac{2 \s_\k +1}{\s_k+1+\d}.
$$
It is sufficient to prove that \eqref{eq:6.9} does not hold for $p_1$,
since it then follows from the Riesz-Thorin convexity theorem that
\eqref{eq:6.9} fails for $p_1 \le p \le \infty$ and that \eqref{eq:6.9} fails
for $1 \le p \le q_1$ follows by duality.

Let $e \in S^d$ be fixed. Define a linear functional $T_n^\da:
L^p \mapsto \RR$ by
$$
  T_n^\da f : = S_n^\da (h_\k^2;f, e) = a_\k \int_{S^d} f(x)
           K_n^\d(h_\k^2; x,e) h_\k^2(x) d\o(x).
$$
Since this is an integral operator, a standard argument shows that
\begin{equation*}
  \|T_n^\d\|_{\k,p} = \|K_n^\d(h_\k^2;x,e)\|_{\k,q}, \qquad \f 1 p + \f 1 q
  =1,
\end{equation*}
where $ \|T_n^\d\|_{\k,p} =\sup_{\|f\|_{\k,p}=1}|T_n^\d f|$.  On
the other hand, by Lemma \ref{lem:QQ}, if \eqref{eq:6.9} holds,
then we will have
\begin{align*}
   |T_n^\d f|& = |S_n^\d (h_\k^2;f,e)| \le \|S_n^\d (h_\k^2;f)\|_{\k,\infty} \\
   &  \le c n^{(2 \s_\k+1)/p} \|S_n^\d (h_\k^2; f)\|_{\k,p}
         \le c n^{(2 \s_\k+1)/p} \| f\|_{\k,p}.
\end{align*}
Consequently, the above two equations show that we will have
\begin{equation}\label{eq:6.10}
  \|K_n^\d (h_\k^2;\cdot,e)\|_{\k,q} \le c n^{ (2 \s_\k+1) /p}, \qquad \f 1 p + \f 1 q =1.
\end{equation}
To complete the proof of  Theorem \ref{thm:Cesaro2}, we show that
\eqref{eq:6.10} does not hold for $p = p_1$.

For this purpose we use the explicit formula for $K_n^\d(h_\k^2;x,e)$ for
$e = e_j$  (see, for example, \cite{LX}), where $e_1,\ldots, e_{d+1}$
denote the usual coordinate vectors in $\RR^{d+1}$,
$$
    K_n^\da (h_\k^2; x,e_j) : = K_n^\d (w_{\l_\k - \k_j, \k_j}; 1, x_j),
$$
where $K^\da_n(w_{\l,\mu}; s,t)$ denotes the $(C,\d)$ kernel of
the generalized Gegenbauer polynomials with respect to the weight
function \eqref{GGweight}. Hence, we have
$$
   \int_{S^d} \left |K_n^\d (h_\k^2; x,e_j)\right|^q h_\k^2(x) d\o(x) =
      c \int_{-1}^1 \left|K_n^\d (w_{\l_\k-\k_j,\k_j}; 1, t)\right|^q
          w_{\l_\k-\k_j,\k_j}(t) dt.
$$
Consequently, choosing $j$ such that $\k_j = \k_{\rm min}$, we see
that the proof of Theorem \ref{thm:Cesaro2} follows by applying
Proposition \ref{prop:6.4} below to $\sa=\sa_\k$ and $\mu=\k_{\rm
min}$. \qed

\begin{prop}\label{prop:6.4}
Let $w_{\sa,\mu}$ be the weight function in \eqref{GGweight} and
$\sa \ge \mu \ge 0$. Define
$$
  \Phi_{n,q}^\d(w_{\sa,\mu}, s): = \int_{-1}^1 |K_n^\d (w_{\sa,\mu}; s, t)|^q
     w_{\sa,\mu}(t) dt.
$$
Then for $q_1 = \frac{2 \sa +1}{\sa+1+\d}$ and $p_1 =
\frac{2\sa+1}{\sa-\d}$,
$$
  \Phi_{n,q_1}^\d(w_{\sa,\mu}, 1) \ge   \Phi_{n,q_1}^\d(w_{\mu, \sa}, 0) \ge
         c n^{(2\sa +1)q_1/p_1} \log n.
$$
\end{prop}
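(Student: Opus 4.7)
The plan is to reduce both $\Phi_{n,q_1}^\d(w_{\sa,\mu},1)$ and $\Phi_{n,q_1}^\d(w_{\mu,\sa},0)$, via the quadratic transformation \eqref{G-Gegen}, to a single Jacobi Cesàro kernel integral at the boundary $v=1$, and then invoke the classical critical-exponent lower bound for that Jacobi kernel.

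For the second quantity, since $C_{2k+1}^{(\mu,\sa)}(0)=0$, only even-degree terms survive in the bilinear expansion of $K_n^\d(w_{\mu,\sa};0,t)$. Using \eqref{G-Gegen}, the identity $P_k^{(\alpha,\beta)}(-u)=(-1)^k P_k^{(\beta,\alpha)}(u)$, and the symmetry of the Jacobi normalization $h_k^{J,(\alpha,\beta)}=h_k^{J,(\beta,\alpha)}$, I would identify $K_n^\d(w_{\mu,\sa};0,t)$, up to an absolute constant, with the Jacobi Cesàro kernel $K_m^\d(w^{(\sa-\f12,\mu-\f12)};1,u)$, where $m=\lfloor n/2\rfloor$ and $u=1-2t^2$; the same change of variables converts $w_{\mu,\sa}(t)\,dt$ into $w^{(\sa-\f12,\mu-\f12)}(u)\,du$ up to a constant. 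For the first quantity I would split $K_n^\d(w_{\sa,\mu};1,t)=K^e(t)+K^o(t)$ into its even and odd parts in $t$ (corresponding to even- and odd-degree contributions), and use the elementary inequality $|A+B|^{q_1}+|A-B|^{q_1}\ge 2|A|^{q_1}$ together with the evenness of $w_{\sa,\mu}$ to get
\[
  \Phi_{n,q_1}^\d(w_{\sa,\mu},1)\ge \int_{-1}^1 |K^e(t)|^{q_1} w_{\sa,\mu}(t)\,dt.
\]
The same reduction applied to $K^e$ (now with $v=2t^2-1$) identifies the right-hand side with the same Jacobi Cesàro integral, giving the first inequality (with an implicit constant) and showing that both quantities are comparable to
\[
  I_m:=\int_{-1}^1 \bigl|K_m^\d(w^{(\sa-\f12,\mu-\f12)};1,v)\bigr|^{q_1} w^{(\sa-\f12,\mu-\f12)}(v)\,dv.
\]

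It then remains to show $I_m\ge c\,m^{(2\sa+1)q_1/p_1}\log m$. Here $q_1=\tfrac{2((\sa-\f12)+1)}{(\sa-\f12)+\d+\f32}$ is precisely the Pollard critical exponent for $L^q$-boundedness of Cesàro means of Jacobi expansions with weight $w^{(\sa-\f12,\mu-\f12)}$ at the endpoint $v=1$. The sharp lower bound exhibiting the characteristic $\log m$ factor at this critical exponent is classical: one inserts the Darboux asymptotic of $P_k^{(\sa-\f12,\mu-\f12)}(\cos\theta)$ into the Cesàro sum and analyzes the resulting oscillatory kernel, finding that the contributions from $v$ near $1$ and from $v$ bounded away from $1$ balance exactly at $q=q_1$ and produce a logarithmic divergence of the correct order. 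This analysis is carried out for general Jacobi Cesàro means in Chanillo-Muckenhoupt \cite{Ch-Mu} (and in the Gegenbauer case in Askey-Hirschman \cite{AH}); the main technical obstacle is precisely the extraction of the $\log m$ factor as a lower bound, which requires the fine oscillatory asymptotics rather than size estimates alone.
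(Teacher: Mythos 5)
Your proposal takes a genuinely different route from the paper, but it has two gaps, one at each end of the reduction.

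First, the identification of $K_n^\d(w_{\mu,\sa};0,t)$ with the Jacobi Ces\`aro kernel $K_m^\d(w^{(\sa-\f12,\mu-\f12)};1,\cdot)$, $m=\lfloor n/2\rfloor$, ``up to an absolute constant'' is not correct as stated. When you retain only the even-degree terms $j=2k$, the Ces\`aro weights you inherit are $A_{n-2k}^\d/A_n^\d$, whereas the genuine Ces\`aro $(C,\d)$ kernel of degree $m$ carries weights $A_{m-k}^\d/A_m^\d$. These agree only asymptotically (the ratio tends to $1$ for $k$ bounded away from $m$, and they differ appreciably near $k=m$), so you are left with a perturbation of a Ces\`aro kernel, not a constant multiple of one. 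Extracting the sharp $\log n$ lower bound is delicate precisely because it lives on a critical exponent; an uncontrolled perturbation of the kernel could destroy it, so this step needs to be justified, not asserted. The paper avoids this issue entirely: it does not compare to a Jacobi Ces\`aro kernel at all, but instead uses the explicit endpoint formula from \cite{LX} which expresses $K_n^\d(w_{\mu,\sa};0,t)$, up to $\CO(1)$, as $c\,n^{(\sa+\mu-\d+\f12)}\int_{-1}^1 P_n^{(\sa+\mu+\d+\f12,\sa+\mu-\f12)}(st)(1-s^2)^{\mu-1}\,ds$, i.e.\ a single Jacobi polynomial of degree $n$ with shifted indices under an intertwining integral.

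Second, and more seriously, you defer the crucial critical-exponent $\log n$ lower bound to ``classical'' results, citing \cite{Ch-Mu} and \cite{AH}. Neither reference supplies the statement you need in the form you need it: \cite{Ch-Mu} is a memoir on weak-type estimates for Jacobi series and \cite{AH} treats ultraspherical (i.e.\ $\mu=0$) Ces\`aro summability, and in any case the quantity you reduce to is an $L^{q_1}$ norm of a Jacobi Ces\`aro kernel, which is not the object the paper bounds. The paper proves the required lower bound from scratch in Proposition~\ref{prop:lower}: it inserts the Darboux asymptotics into the intertwining integral $\int_{-1}^1 P_n^{(a+\f12,b+\f12)}(st)(1-s^2)^{\mu-1}\,ds$, isolates the main oscillatory term $K_n(\phi)\ge c\,n^{-\mu}\phi^{-\sa-\d-1}(1+\cos(2N\phi+2\g))$, shows its $L^{q_1}$ contribution produces $n^{-\mu q_1}\log n$ because $q_1(\sa+1+\d)=2\sa+1$ makes the integral $\int\phi^{-1}(1+\cos)^{q_1}\,d\phi$ logarithmically divergent, and carefully shows all error and remainder terms ($G_n$ and $E_{n,q_1}$) are $O(n^{-\mu q_1})$ and do not swamp the logarithm. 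That analysis (plus the integration by parts to handle $\mu\ge1$) is the substance of the proof and cannot be replaced by a citation. In short, your outline correctly identifies where the $\log n$ should come from, but it does not prove it, and the kernel identification that would feed a citable Jacobi result is itself not established.
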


The proof of this proposition is given in the following subsection.

\subsubsection{Proof of Proposition \ref{prop:6.4}}
The case of $q =1$ and $\d = \sa$ has already been established in
\cite{DaiX, LX}. We follow the approach in \cite{DaiX} and briefly
sketch the proof.

Using the sufficient part of Corollary \ref{thm:CesaroGG}, we can follow
exactly the deduction in \cite[p. 293]{LX} to obtain
\begin{align*}
 & \Phi_{n,q_1}^\d(w_{\sa,\mu}; 1) \ge  \Phi_{n,q_1}^\d(w_{\mu,\sa}; 0) =
  c n^{(\sa+\mu-\d+ \frac{1}{2})q_1} \\  & \quad \times \int_{0}^1 \left |
   \int_{-1}^1 P_n^{(\sa+ \mu +\d+\frac12, \sa+\mu -\frac12)} (s t)
   (1-s^2)^{\mu-1} ds\right|^{q_1} t^{2\mu}(1-t^2)^{\sa -\frac12} dt + \CO(1).
\end{align*}
As  a result, we see that Proposition \ref{prop:6.4} is a consequence of the
lower bound of a double integral of the Jacobi polynomial given in the next
proposition.

\begin{prop} \label{prop:lower}
Assume $ \sa, \mu\ge 0$ and $0 \le \da \le  \sa + \mu$. Let $a =
\sa + \mu + \d $, $b = \sa+\mu -1$ and $q_1 =
\frac{2\sa+1}{\sa+1+\da}$. Then
\begin{align} \label{lowerJacobi}
&\int_{0}^1 \left| \int_{-1}^1 P_n^{(a+\frac12,b+\frac12)} (st)
(1-s^2)^{\mu-1} ds \right|^{q_1} t^{2\mu} (1-t^2)^{\sa -1/2} dt \\
 & \qquad\qquad\qquad \ge c \, n^{-(\mu +1/2)q_1}  \log n.  \notag
\end{align}
\end{prop}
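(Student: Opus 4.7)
The plan is to isolate the inner integral
\begin{equation*}
J_n(t) := \int_{-1}^1 P_n^{(a+\f12,\, b+\f12)}(st)(1-s^2)^{\mu-1}\,ds,
\end{equation*}
so that the target becomes $\int_0^1 |J_n(t)|^{q_1} t^{2\mu}(1-t^2)^{\sa-\f12}\,dt$, and then to derive a sharp asymptotic for $J_n(\cos\psi)$ as $\psi\to 0$. The bound \eqref{lowerJacobi} will drop out by recognising that, after the substitution $t=\cos\psi$, the outer integral is exactly at the critical borderline of \eqref{JacobiLp}. This mirrors the approach of \cite{DaiX, LX} (carried out there only for $q=1$ and $\da=\sa$) and extends it to general $q_1$ and $\da$.

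\smallskip

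\emph{Asymptotic of $J_n(\cos\psi)$.} Substituting $s=\cos\phi$ gives
\begin{equation*}
J_n(\cos\psi) = \int_0^\pi P_n^{(a+\f12,\,b+\f12)}(\cos\psi\cos\phi)(\sin\phi)^{2\mu-1}\,d\phi.
\end{equation*}
Inserting the Szego asymptotic
\begin{equation*}
P_n^{(\a,\b)}(\cos\omega) = \pi^{-\f12} n^{-\f12}(\sin\tf\omega2)^{-\a-\f12}(\cos\tf\omega2)^{-\b-\f12}\cos(N\omega+\tau_\a) + O\((n\sin\omega)^{-\f32}\),
\end{equation*}
valid for $n^{-1}\le\omega\le\pi-n^{-1}$, the principal contribution comes from $\phi$ near the endpoint $0$. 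Using $\cos\omega=\cos\psi\cos\phi$, which for small $\phi,\psi$ gives $\omega\approx\psi+\phi^2/(2\psi)$, the phase behaves as $N\omega\approx N\psi+N\phi^2/(2\psi)$, so the algebraic weight $\phi^{2\mu-1}$ combines with a Fresnel-type quadratic phase at the endpoint. Substituting $w=\phi\sqrt{N/(2\psi)}$ and using $\int_0^\infty w^{2\mu-1}\cos(w^2+\t)\,dw=\f12\Gamma(\mu)\cos(\t+\mu\pi/2)$ yields, for $\psi\in[C/n, c_2]$ with $C$ sufficiently large,
\begin{equation*}
J_n(\cos\psi) = c\, n^{-\mu-\f12}\psi^{-(\sa+\da+1)}\cos(N\psi+\ga_0)+R_n(\psi),
\end{equation*}
with $R_n(\psi)=o\!\(n^{-\mu-\f12}\psi^{-(\sa+\da+1)}\)$ uniformly in that range.

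\smallskip

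\emph{Logarithmic lower bound and main obstacle.} Changing variables $t=\cos\psi$ in the outer integral gives $t^{2\mu}(1-t^2)^{\sa-\f12}dt\sim\psi^{2\sa}\,d\psi$ for small $\psi$, so substituting the asymptotic above,
\begin{equation*}
\int_0^1|J_n(t)|^{q_1}t^{2\mu}(1-t^2)^{\sa-\f12}dt \ge c\,n^{-(\mu+\f12)q_1}\int_{C/n}^{c_2}|\cos(N\psi+\ga_0)|^{q_1}\psi^{-(\sa+\da+1)q_1+2\sa}\,d\psi.
\end{equation*}
The critical identity $q_1=(2\sa+1)/(\sa+\da+1)$ forces the exponent of $\psi$ to be exactly $-1$; partitioning $[C/n, c_2]$ into $\sim n$ sub-intervals of length $\pi/N$ on which $|\cos(N\psi+\ga_0)|^{q_1}$ has a positive mean, and weighting by $\psi^{-1}$, produces a lower bound of order $\log n$, which proves \eqref{lowerJacobi}. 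The main technical difficulty lies in making the asymptotic of Step 1 uniform in $\psi$: the algebraic exponent $2\mu-1$ changes character across $\mu=\f12$, and controlling the remainder $R_n(\psi)$ so that the main oscillatory term dominates on a set of positive density in $[C/n, c_2]$ — the very prerequisite for the periodic-averaging argument — requires a careful Fresnel/Erd\'elyi endpoint expansion together with $O((n\psi)^{-3/2})$ control of the Szego error, uniformly across the regimes $\mu<\f12$ (genuine singularity) and $\mu>\f12$ (vanishing weight).
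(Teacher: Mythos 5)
Your plan is essentially the same route that the paper (and its reference \cite{DaiX}) follows: substitute $s=\cos\phi$, $t=\cos\psi$, insert the Szeg\H{o} asymptotic, perform a Fresnel-type endpoint analysis at $\phi=0$ to pick up the extra factor $(\psi/n)^{\mu}$, and then exploit the critical identity $q_1(\sa+\da+1)=2\sa+1$ to produce a weight $\psi^{-1}$, whose integral against the oscillation yields $\log n$. The leading-order asymptotic you derive, $J_n(\cos\psi)\approx c\,n^{-\mu-\frac12}\psi^{-(\sa+\da+1)}\cos(N\psi+\ga_0)$, agrees with the paper's $K_n(\phi)\ge c\,n^{-\mu}\phi^{-\sa-\da-1}\bigl(1+\cos(2N\phi+2\ga)\bigr)$ after factoring out $n^{-1/2}$ and taking $q_1$-th powers.

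However, there is a genuine gap in the execution that you yourself flag but do not resolve. First, the claim that $R_n(\psi)=o\bigl(n^{-\mu-\frac12}\psi^{-(\sa+\da+1)}\cos(N\psi+\ga_0)\bigr)$ \emph{uniformly} on $[C/n,c_2]$ cannot literally hold: the main term vanishes at the zeros of the cosine, and any honest error term will not vanish there. What one actually obtains is an \emph{envelope} bound on the remainder (the paper writes $|G_n(\phi)|\le c\,n^{-1}\phi^{\mu-\sa-\da-2}$, a factor $(n\phi)^{-(1-\mu)}$ smaller than the envelope of the main term), and then one must work with a decomposition in which the principal piece $K_n$ is \emph{nonnegative} (hence the $1+\cos(\cdot)$ form), so that one can estimate $\int|M_n|^{q_1}\gtrsim\int|K_n|^{q_1}-c\int|G_n|^{q_1}$ and show the second integral is $O(n^{-\mu q_1})$, not $O(n^{-\mu q_1}\log n)$. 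Your proposal gestures at this but does not carry it out. Second, the split you propose at $\mu=\frac12$ is not the relevant one: the Fresnel integral $\int_0^\infty w^{2\mu-1}\cos(w^2+\tau)\,dw$ converges precisely for $0<\mu<1$, which is the paper's first case; the cases $\mu=1$ and $\mu>1$ require integration by parts in the inner integral to reduce $\mu$ below $1$, a step your proposal omits entirely. Finally, you do not actually estimate the error term $E_{n,q_1}$ arising from the $O((n\sin\omega)^{-3/2})$ part of the Szeg\H{o} asymptotic; the paper shows this contributes only $O(n^{-(\mu+\frac12)q_1})$, which is crucial since otherwise it could swamp the $\log n$.
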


\begin{proof}
Denote the left hand side of \eqref{lowerJacobi} by $I_{n,q_1}$.
First assume that $0 < \mu < 1$. Following the proof of
\cite{DaiX}, we can conclude that
$$
  I_{n,q_1} \ge c n^{-q_1/2} \int_{n^{-1}}^{\pi/4} |M_n(\phi)|^{q_1}
    (\sin \phi)^{2\sa} d\phi - \CO(1) E_{n,q_1},
$$
where
$$
   E_{n,q_1} : = n^{-\frac32 q_1 }   \int_{ n^{-1}}^{\pi/4}
     \[ \int_\phi^{\pi-\phi}  \frac{(\cos^2 \phi-\cos^2 \t)^{\mu-1} }
        { (\sin \frac{\t}2 )^{a+1} (\cos \frac{\t}2)^{b+1} } d\t \]^{q_1}
          \, (\sin \phi)^{2\sa}  d\phi
$$
and $M_n(\phi) := K_n(\phi) + G_n(\phi)$ satisfies
$$
  K_n(\phi) \ge c n^{-\mu} \phi^{-\sa - \d -1} \(1 + \cos (2N\phi + 2 \g) \), \qquad
       n^{-1} \le \phi\le \va,
$$
for a sufficiently small absolute constant $\va > 0$, where
$N=n+\f{a+b}2+1$ and $\g = -\f\pi2(a+1 -\mu)$, and
$$
|G_n(\phi)| \le c n^{-1} \phi^{\mu-\sa - \d -2}, \qquad n^{-1}
\leq \phi < \pi/4.
$$

From these estimates and the fact that $q_1 (\sa+1+\da) = 2\sa+1$,
it follows that
\begin{align*}
 & \int_{n^{-1}}^\va |K_n(\phi)|^{q_1} (\sin \phi)^{2\sa}  d\phi \ge
    c n^{-\mu q_1} \int_{n^{-1}}^\va \phi^{2\sa-q_1 (\sa+1+\da)}
    (1+ \cos (2 N\phi +2\g))^{q_1} d\phi \\
 & =  c n^{-\mu q_1} \int_{n^{-1}}^\va \phi^{-1} (1+ \cos (2 N\phi +2\g))^{q_1} d\phi
      \ge c n^{-\mu q_1} \log n,
\end{align*}
where in the last step we used $(1+ A)^{q_1} \ge 1 + q_1 A$ for
$A\in [-1,1]$ and the fact that  $\int_{n^{-1}}^\va \phi^{-1}
 \cos  (2 N\phi +2\g))d\phi \le c $
upon using integral by part once. Furthermore,
\begin{align*}
   \int_{n^{-1}}^\va |G_n(\phi)|^{q_1} (\sin \phi)^{2\sa} d \phi
  \le\ & c n^{-q_1} \int_{n^{-1}}^\va \phi^{2\sa + {q_1}(\mu-\sa-\d-2)} d \phi  \\
   =  \ &  c n^{-q_1} \int_{n^{-1}}^\va \phi^{q_1(\mu-1)-1} d \phi  \le c n^{-q_1\mu}.
\end{align*}
Together, these estimates yield that for $0 < \mu < 1$,
$$
 \int_{n^{-1}}^{\pi/4} |M_n(\phi)|^{q_1} (\sin \phi)^{2\sa} d\phi \ge
       c n^{-q_1 \mu} \log n.
$$

Moreover, the remainder $E_{n,q_1}$ term can be estimated as
follows
\begin{align*}
 E_{n,q_1} & \le c n^{-\f32 q_1} \int_{n^{-1}}^{\pi/4} \[ \int_{\phi}^{\pi/4}
      \t^{\mu-a-2} (\t - \phi)^{\mu-1} d\t  \]^{q_1} \phi^{2\sa} d\phi \\
     & \le c n^{-\f32 q_1} \int_{n^{-1}}^{\pi/4} \phi^{(\mu-\sa-\d-2){q_1}+2 \sa} d\phi \\
     &  \le c n^{-\f32 q_1 - q_1 (\mu-1)} = c n^{-(\mu+\f12)q_1},
\end{align*}
where in the second step, we divided the inner integral as two parts,
over $[\phi, 2 \phi]$ and over $[2\phi, \pi/2]$, respectively, to derive
the stated estimate.

Putting these two terms together, we conclude the proof for the case
$0 < \mu < 1$. The case $\mu = 1$ can be derived similarly upon taking
an integration by parts for the inner integral in \eqref{lowerJacobi}. The
case $\mu > 1$ reduces to the case $0 < \mu < 1$ upon taking integration
by parts $\lfloor \mu \rfloor$ times as in \cite{DaiX}.
\end{proof}


\section{Proof of theorems on the ball and on the simplex}
\setcounter{equation}{0}

\subsection{Proof of results on the unit ball}

Under the mapping $\phi: x\in B^d \mapsto (x, \sqrt{1-\|x\|^2}) \in S^d_+$
in \eqref{B-S}, orthogonal polynomials with respect to $W_\k^B$ in
\eqref{weightB} can be deduced from $h$-spherical harmonics that are
even in $x_{d+1}$. Moreover, the connection \eqref{projBS} shows that
the $(C,\d)$ means $S_n^\d (W_\k^B; f)$ is related to
$S_n^\d(h_\k^2; F)$ by
\begin{equation*} 
  S_n^\d(W_\k^B; f,x ) = S_n^\d (h_\k^2; F, X), \quad X:= (x,\sqrt{1-\|x\|^2}),
 \end{equation*}
where $F(x, x_{d+1}) : = f(x)$ for $x \in B^d$ and $(x,x_{d+1}) \in S^d$.
Consequently, by \eqref{BSintegral}, Theorem \ref{thm:CesaroBT} with
$\Omega = B$ follows immediately from Theorem \ref{thm:Cesaro}.

The proof of Thereom \ref{thm:CesaroBT2} follows almost exactly as that
of Theroem \ref{thm:Cesaro2}. We have in this case (\cite[p. 287-288]{LX})
\begin{align*}
K_n (W_\k^B; x, e_j) & \ = K_n^\d (w_{\l_\k - \k_j, \k_j}; 1, x_j), \qquad 1 \le j \le d, \\
K_n (W_\k^B; x, 0)    & \ = K_n^\d (w_{k_{d+1}, \l_\k - \k_{d+1}}; \|x\|, 0).
\end{align*}
Hence, by \eqref{BSintegral}, we can again reduce the proof of
Theroem \ref{thm:CesaroBT2} to the lower bound of
$\Phi_{n,q}^\d(w_{\sa,\mu},1)$ and $\Phi_{n,q}^\d(w_{\mu,\sa},0)$,
which follows again from Proposition \ref{prop:6.4}.

\subsection{Proof of results on the simplex}

\subsubsection{Projection operator}
Under the mapping $\psi: x \in T^d \mapsto (x_1^2,\ldots,x_d^2)
\in B^d$, orthogonal polynomials with respect to $W_\k^T$ on $T^d$
and those with respect to $W_\k^B$ on $B^d$ are related. In
particular, we have the connection between $\proj_n(W_\k^T;f)$ and
$\proj_{2n}(W_\k^B; f\circ \psi)$ given in \eqref{projTB}, from
which the result on the projection operators can be readily
derived.

In fact, if $\|\proj_n (W_\k^B; f)\|_{W_k^B,p} \le A_n
\|f\|_{W_\k^B;p}$, then by \eqref{projTB} and \eqref{T-B},
\begin{align*}
   \|\proj_n(W_\k^T; f)\|_{W_\k^T;p}
   &  = \|\proj_n(W_\k^T; f)\circ \psi\|_{W_\k^B;p} \\
    & = \frac{1}{2^d} \Bl \| \sum_{\va \in \ZZ_2^d}
           \proj_{2n} (W_\k^B; f\circ \psi, \cdot\va)  \Br \|_{W_\k^B;p}    \\
    &    \le \|\proj_{2n}(W_\k^B; f\circ \psi) \|_{W_\k^B;p} \\
    & \le   A_{2n}  \|f\circ \psi\|_{W_\k^B;p} = A_{2n} \|f\|_{W_\k^T;p},
\end{align*}
from which Theorem \ref{thm:projBT} for $\Omega = T$ follows immediately
from the case $\Omega = B$. Furthermore, since the distance $d_T(x,y)$ on
$T^d$ is related to the geodesic distance on $S^d$ by
$$
  d_T\(\psi(x),\psi(y)\) = d(X,Y), \qquad X = \(x,\sqrt{1-\|x\|^2}\), \quad
           Y = \(y,\sqrt{1-\|y\|^2}\),
$$
from \eqref{BSintegral} and \eqref{T-B} it follows readily that
$$
   \int_{c_T(x,\t)}W_\k^T(x) d x = \int_{c(X,\t)} h_\k^2(y) d\o(y),$$
   where
         $X = \(\sqrt{x_1},\cdots, \sqrt{x_d}, \sqrt{1-|x|} \).$
Consequently, we conclude that Theorem \ref{thm:proj-capT} follows from
Theorem \ref{thm:proj-cap}.

\subsubsection{Ces\`aro means}
Since the connection \eqref{projTB} relates the
projection operator of degree $n$ for $W_\k^T$ to the projection
operator of degree $2n$ for $W_\k^B$, we cannot deduce results for
the Ces\`aro means $S_n^\d(W_\k^T;f)$ from those of $S_n^\d(W_\k^B;f)$
directly. We can, however, follow the proof of the theorems for the
$h$-harmonics. Below we give a brief outline on how this will work out.

\medskip\noindent
{\it Proof of Theorem \ref{thm:CesaroBT} with $\Omega = T$.}
We follow the decomposition in the subsection 6.1.1  to define
$$
 S_{n,v}^\d(W_\k^T; f) = \sum_{j=0}^n \wh S_{n,v}^\d(j)\proj_j(W_\k^T;f),
    \qquad v = 1,2,\ldots,  \lfloor \log_2 n \rfloor +2.
$$
The same argument shows that it suffices to prove the analogue
of \eqref{5-3},
\begin{equation} \label{eq:7.1}
  \|S_{n,v}^\d (W_\k^T;f)\|_{W_\k^T,p} \le c 2^{-v \va}\|f\|_{W_\k^T;p},
         \qquad v = 2,\ldots,  \lfloor \log_2 n \rfloor +2.
\end{equation}
Denote the kernel of $S_{n,v}^\d(W_\k^T;f)$ by $K_{n,v}^\d(W_\k^T;x,y)$.
Then we have by \eqref{proj-kernelT} that
$$
    K_{n,v}^\d(W_\k^T;x,y) : = c_\k\int_{[-1,1]^{d+1}}
        D_{n,v}^\d(W_\k^T;2 z(x,y,t)^2 -1) \prod_{i=1}^{d+1} (1-t_i^2)^{\k_i-1}dt
$$
where
$$
   D_{n,v}^\d (W_\k^T; t): = \sum_{j=0}^n \wh{S}_{n,v}^\da(j)
    \frac{(2j+\l_\k)\Gamma(\frac12)
          \Gamma(j+\l_k)}  {\Gamma(\l_\k+1)\Gamma(j+\frac12)} \\
               P_j^{(\l_k-\frac12, \f 12)} (t).
$$
Consequently, define analogues of $a_{n,v,\ell}$ by
\begin{align*}
   a^T_{n,v,0}(j) \ & =  (2j + \l_k) \wh S_{n,v}^\d(j) \\
   a^T_{n,v,\ell+1}(j) \ & = \frac{a_{n,v,\ell}(j)}{2j + 2 \l_\k + \ell} -
          \frac{a_{n,v,\ell}(j+1)}{2j + 2 \l_\k + \ell +2},
\end{align*}
we can then write, again following the proof of Lemma 3.3 of
\cite{BD}, that
$$
 D_{n,v}^\d (W_\k^T; t) = c \sum_{j=0}^n a_{n,v,\ell}^T(j)
      \frac{\Gamma(j+2\l_\k + \ell)}{\Gamma(j+\l_\k+\f12)}
           P_j^{(\l_k+\ell -\frac12, - \f 12)} (t).
$$
The estimate \eqref{5-5} holds exactly for $a_{n,v,\ell}(j)$. Thus, to follow
the proof of Lemma \ref{lem-5-1}, we need to estimate
\begin{equation}\label{eq:intVTP}
 \int_{[-1,1]^{d+1}}
        P_j^{(\l_k+\ell -\frac12,  - \f 12)} (2 z(x,y,t)^2 -1)
           \prod_{i=1}^{d+1} (1-t_i^2)^{\k_i-1}dt.
\end{equation}
Using the fact that $P_n^{(\a,-1/2)}(2 t^2 -1)$ can be written in terms of
$P_n^{(\a,\a)}$ (\cite[(4.1.5)]{Szego}), we can estimate \eqref{eq:intVTP}
again by Lemma \ref{VJacobi}. The result is
$$
 |K_{n,v}^\d(W_\k^T;x,y)| \[W_\k^T(y)\]^{-1}\le
      c n^d 2^{v(\ell-1-\da)} \(1+nd(\bar x, \bar y) \)^{-\l_\k - \ell + d -1},
$$
which implies that the analogue of Corollary \ref{cor-5-2} holds; that is, for any
$\g > 0$ there is an $\va_0 > 0$ such that
$$
  \sup_{x \in T^d} \int_{\{y: d_T(x,y) \ge 2^{(1+v)\g}/n\}}
  |K_{n,v}(W_\k^T;x,y)|
      W_\k^T(y)dy \le c 2^{- v \va_0} .
$$

In order to prove \eqref{eq:7.1}, we then define $\Lambda$ to be a
maximal separate subset of $T^d$ exactly as the one we defined in
 Subsection 6.1.3, except with $d(\varpi,\varpi')$ replaced by
$d_T(x,x')$. Define
$$
    f_y(x): = f(x) \chi_{c_T(y,\f{2^{v_1}}{n})}(x) [A(x)]^{-1}, \qquad
        A(x) = \sum_{y \in \Lambda} \chi_{c_T(y,\f{2^{v_1}}n)}(x).
$$
Then the same argument shows that it suffices to show that
$$
    \|S_{n,v}^\d(W_\k^T; f_y)\|_{W_\k^T,p} \le c 2^{-v \va_0} \|f_y\|_{W_\k^T;p}.
$$
This last inequality can be established exactly as in \eqref{5-9} and there
is no need to introduce the additional set $c^*(\varpi,\t)$.
\qed

\medskip
\noindent
{\it Proof of Theorem \ref{thm:CesaroBT2}.}
We first note that the analogue of Lemma \ref{lem:QQ} holds;
that is, for $1\le p < \infty$,
$$
   \|Q\|_\infty:=\max_{x\in T^d}|Q(x)| \le c n^{(2\s_\k +1)/p} \|Q\|_{W_\k^T,p}
$$
for any polynomial of degree $n$ on $\RR^d$. Furthermore,
following the proof of Theorem \ref{thm:Cesaro2}, it is sufficient
to prove that
\begin{equation}\label{eq:7.3}
 \|K_n^\d(W_\k^T; \cdot,y)\|_{\k,q} \le c n^{ (2 \s_\k+1) /p}, \qquad \tf 1 p + \tf 1 q =1,
\end{equation}
where $y \in T^d$ is fixed, does not hold for $p = p_1: = \frac{2\s_\k+1}{\s_\k-\d}$.
To proceed, we then express $K_n^\d(W_\k^T)$ in terms of the kernel
for the Jacobi polynomial expansions (\cite[p. 290]{LX})
\begin{align*}
 K_n^\d\(W_\k^T;x, e_j\) & =
       K_n^\d \left(w^{(\l_\k -\k_j-\frac12, \k_j -\frac12)};1, 2x_j-1\right),
                        \quad 1 \le j\le d, \\
 K_n^\d(W_\k^T;x, 0) & =
       K_n^\d \left(w^{(\l_\k -\k_j-\frac12, \k_j -\frac12)};1, 1-2|x|\right),
\end{align*}
from which we can deduce by changing variables that
\begin{align*}
   \int_{T^d} & \left|K_n^\d\(W_\k^T; e_j,y\)\right|^q W_\k^T(y) dy  \\
     & = c
     \int_{-1}^1  \left | K_n^\d \(w^{(\l_\k -\k_j-\frac12, \k_j -\frac12)};1,t\)\right|^q
             w^{(\l_\k -\k_j-\frac12, \k_j -\frac12)}(t) dt
\end{align*}
for $1 \le j \le d$, and also for $j=d+1$ if we agree that $e_{d+1} =0$.  As a
result, we have reduced the problem to that of Jacobi polynomial expansions,
so that the desired result follows from \cite{Ch-Mu}.
\qed

\enddocument